\newtheorem{theorem}{Theorem}[section]
\newtheorem{lemma}[theorem]{Lemma}
\newtheorem{remark}[theorem]{Remark}
\newtheorem{proposition}[theorem]{Proposition}
\newtheorem{corollary}[theorem]{Corollary}
\newcommand{\vertiii}[1]{{\left\vert\kern-0.25ex\left\vert\kern-0.25ex\left\vert #1 
    \right\vert\kern-0.25ex\right\vert\kern-0.25ex\right\vert}}
\newcommand\numberthis{\addtocounter{equation}{1}\tag{\theequation}} 
\title{Explicit and sharp two-sided estimates for the killed Langevin process}
\author[1]{Mouad Ramil\thanks{E-mail: ramil.mouad@gmail.com}}   
\affil[1]{Research Institute of Mathematics, Seoul National University, Seoul, Republic of Korea} 
\date{\today} 
\begin{document}
\maketitle

\begin{abstract}  
We prove explicit and sharp two-sided estimates for the transition density of the Langevin process with quadratic potential, killed outside of the position interval $(0,1)$. The long-time asymptotics of this transition density are also obtained. In particular, this allows us to show that the killed semigroup is uniformly conditionally ergodic.\\
\noindent\textbf{Mathematics Subject Classification.} 60J70, 35P15\\  
\noindent\textbf{Keywords.} Langevin process, two-sided estimates,  quasi-stationary distribution.
\end{abstract}  

\section{Introduction} 

Let $(q_t,p_t)_{t\geq0}$ be the \textit{Langevin} process in $\mathbb{R}\times\mathbb{R}$ satisfying the following stochastic differential equation:
\begin{equation}\label{eq:Langevin_intro}
  \left\{
    \begin{aligned}
        &\mathrm{d}q_t=p_t \mathrm{d}t , \\
        &\mathrm{d}p_t=-V'(q_t)\mathrm{d}t-\gamma p_t\mathrm{d}t+\sigma\mathrm{d}B_t,
    \end{aligned}
\right.  
\end{equation} 
where $\gamma>0$, $\sigma=\sqrt{2\gamma k_BT}$, $k_B>0$ is the Boltzmann constant, $T$ is the fixed medium temperature, $V$ is the potential function and $(B_t)_{t\geq0}$ is a one-dimensional Brownian process. The Langevin process is used in statistical physics to model the evolution of a thermostated molecular system. In the above two-dimensional case, the particle is described at all time $t\geq0$ by its position $q_t\in\mathbb{R}$ and velocity $p_t\in\mathbb{R}$. It is also subjected to a force $F(q)=-V'(q)$ and to collisions modeled by a friction coefficient $\gamma>0$ along with a random inflow of energy carried by the Brownian component.

We are interested here in sharp \textit{two-sided estimates} for the transition density of the process~\eqref{eq:Langevin_intro} killed when leaving the domain $D:=(0,1)\times\mathbb{R}$. Namely, let $$\tau_\partial:=\inf\{t>0:q_t\notin (0,1)\}.$$ 
We define the transition density of the process~\eqref{eq:Langevin_intro} killed outside of $D$ as the function $\mathrm{p}^D$ defined in $\mathbb{R}_+^*\times D\times D$ such that for all $t>0$, for all $(q,p)\in D$, for all borelian $A\subset D$,
$$\mathbb{P}_{(q,p)}((q_t,p_t)\in A,\tau_\partial>t)=\int_A \mathrm{p}_t^D(q,p,q',p') \mathrm{d}q'\mathrm{d}p'.$$
We shall prove here the existence of time-dependent constants $c_t,c'_t>0$ (which may also depend on the coefficients of~\eqref{eq:Langevin_intro})  and explicit functions $H,H^*$ defined on $D$ such that for all $t>0$,
\begin{equation}\label{intro:two-sided}
    \forall (q,p),(q',p')\in D,\qquad c'_tH(q,p)H^*(q',p')\leq\mathrm{p}^D_t(q,p,q',p')\leq c_tH(q,p)H^*(q',p').
\end{equation}
These estimates have been proved in the literature for multidimensional elliptic processes such as
\begin{equation}\label{eq:elliptic diff intro}
    \mathrm{d}\overline{q}_t=F(\overline{q}_t)\mathrm{d}t+\sigma(\overline{q}_t)\mathrm{d}B_t,
\end{equation}
on a smooth bounded domain $\mathcal{O}$ when $F,\sigma$ are smooth and $\sigma$ is uniformly elliptic, see for instance~\cite{GQZ,KnobPart,kim2006two}. The proofs often rely on sharp estimates on the associated Green function which have been obtained for uniformly elliptic diffusions in~\cite{gruter1982green}. In our case, given the absence of noise in the $q$-direction in~\eqref{eq:Langevin_intro}, the associated infinitesimal generator of~\eqref{eq:Langevin_intro} is not elliptic anywhere in the domain and therefore such methods do not apply in this case.  

Sharp two-sided estimates are important as they can help us understand how the killed semigroup of~\eqref{eq:Langevin_intro} and associated eigenvectors decay at the boundary $\partial D$. In addition to that, these estimates allow us to obtain the sharp long-time behaviour, with respect to the initial condition, of the process conditioned on not being killed. This property is called \textit{uniform conditional ergodicity} and was obtained for elliptic diffusions such as~\eqref{eq:elliptic diff intro} in~\cite{KnobPart,champagnat2018criteria} ensuring the existence of a constant $C>0$ such that for all $t\geq0$, for any probability distribution $\theta$ on $\mathcal{O}$,
\begin{equation}\label{eq:cv unif vers qsd intro}
    \left\Vert\mathbb{P}_\theta\left(\overline{q}_t \in \cdot | \overline{\tau}_\partial > t\right) - \mu(\cdot)\right\Vert_{TV}\leq C \mathrm{e}^{-\alpha t},
\end{equation}
where $\|\cdot\|_{TV}$ denotes the total-variation norm on the space of bounded signed measures on $\mathbb{R}^{2}$, $\overline{\tau}_\partial=\inf\{t>0:\overline{q}_t\notin\mathcal{O}\}$ and  $\mu$ is called the quasi-distribution on $\mathcal{O}$ of the process~\eqref{eq:elliptic diff intro} (see for instance~\cite{Collet,VilMel,CattColMelMart} for a literature on quasi-stationary distributions). After proving the two-sided estimates~\eqref{intro:two-sided} in this work we shall be able extend the uniform convergence in~\eqref{eq:cv unif vers qsd intro} to the Langevin process~\eqref{eq:Langevin_intro} in the domain $D$. Let us mention that convergence results of the conditioned distribution towards the quasi-stationary distribution were already obtained recently for the Langevin process in~\cite{QSD1,GuiNectoux}. However, in these results the prefactor $C=C(\theta)$ strongly depends on the initial distribution $\theta$ and blows up at the boundary $\partial D$. 

Let us mention that this is the first work to provide a two-sided estimates and/or a uniform convergence result towards the QSD for a Langevin process killed outside of a domain. We shall also provide  here explicit expressions of the functions $H,H^*$ involved in the two-sided estimates~\eqref{intro:two-sided}, thus showing explicitly how the first exit time probability behaves close to the boundary of the domain. As said previously, the main difficulty of such proofs is that the tools used in the literature for the study of two-sided estimates of diffusion processes require an ellipticity condition on the generator which is not valid anywhere here. From the results we obtain we see that there is in fact a difference of nature between the boundary behaviour of an elliptic diffusion process and a kinetic process like~\eqref{eq:Langevin_intro}. The approach used here shall be different and will rely on a careful study of the first exit time probability. In the elliptic case, its boundary behaviour shall depend on the distance to the boundary of the domain. In the case of~\eqref{eq:Langevin_intro}, given the kinetic nature of the process the boundary behaviour is in fact expected to be more complex and shall depend on the velocity and its sign. Examples of different behaviours are for instance provided in the next paragraph. 

If we integrate the two-sided estimates~\eqref{intro:two-sided} over $(q',p')\in D$, we obtain constants $c_t,c'_t>0$ such that for all $t>0$,
\begin{equation}\label{eq:proba estimates intro}
    \forall (q,p)\in D,\qquad c'_tH(q,p)\leq\mathbb{P}_{(q,p)}(\tau_\partial>t)\leq c_tH(q,p).
\end{equation}
Let us now take a look at the behaviour of the first exit time probability $\mathbb{P}_{(q,p)}(\tau_\partial>t)$ at a fixed time $t>0$, depending on the vector $(q,p)\in D$, in the following two-dimensional example: 
\begin{figure}[h]  
\begin{center}
\includegraphics[scale=0.4]{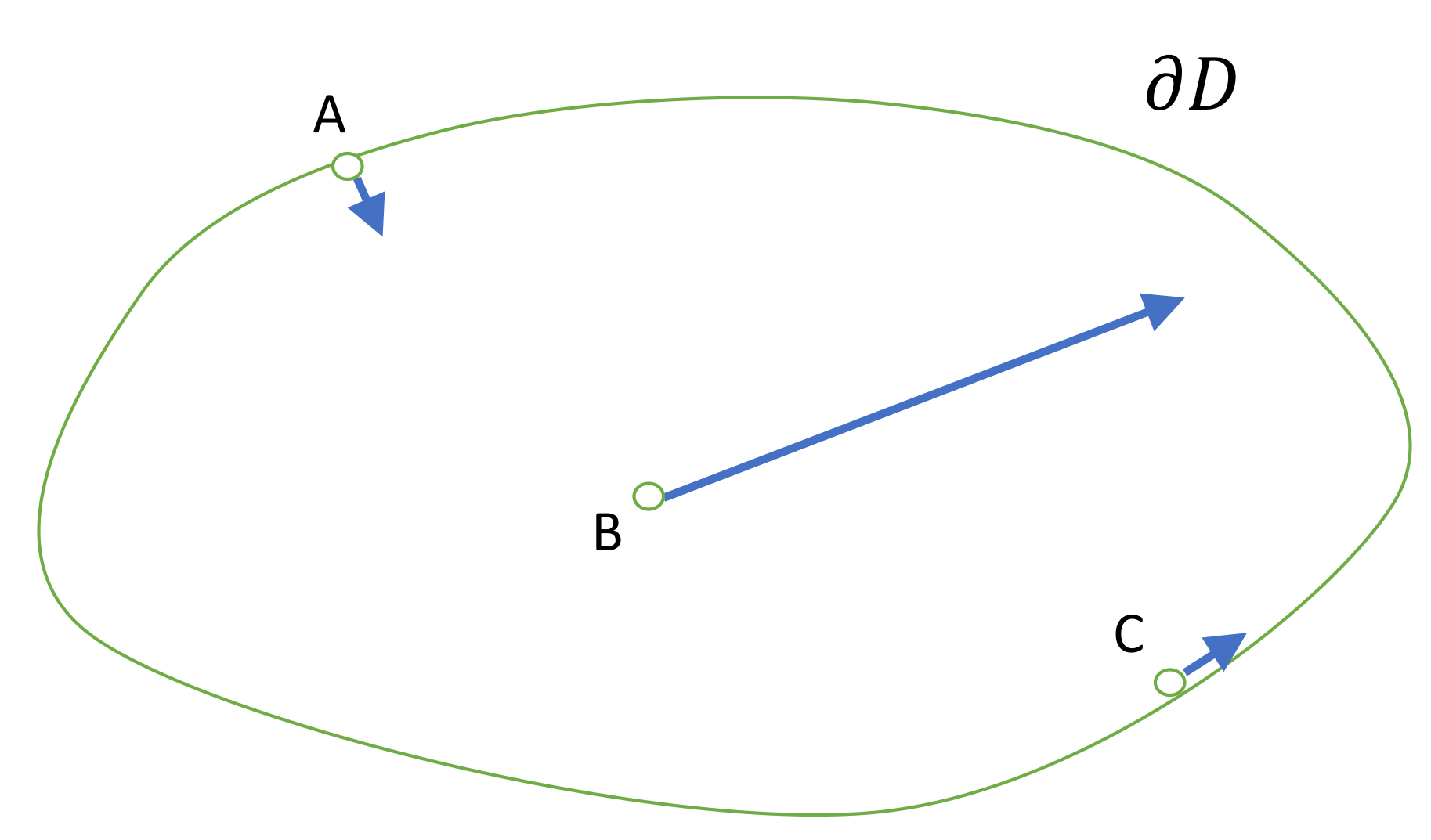}
\end{center}
\caption{Exit event from $D$.}
\label{Fig:exit event}
\end{figure}

  Consider the point $A\in\partial D$ with velocity directed towards the interior of $D$ in Figure~\ref{Fig:exit event}. In this case, even though $A\in\partial D$, the probability $\mathbb{P}_{(q,p)}(\tau_\partial>t)$ is positive because the process~\eqref{eq:Langevin_intro} re-enters the domain immediately almost-surely. On the contrary, it shall vanish if the velocity is directed towards the exterior of the domain $D$. Consider now the point $B\in D$ which is away from the boundary $\partial D$ with however a very large velocity. In this case the probability $\mathbb{P}_{(q,p)}(\tau_\partial>t)$ should be very small and is expected to vanish when the velocity goes to infinity, even though the distance of $B$ to the boundary is bounded from below. In fact, since the velocity is very high the process is likely to exit $D$ "quickly". However, this velocity should be high compared to the distance to the boundary in some sense which shall be clarified throughout the computations. Consider now the last case, the point $C\in D$ is close to the boundary with tangential velocity. In this case the velocity component does not play any role but given the random oscillations in the position coordinates, which are propagated by the velocity coordinates, the probability $\mathbb{P}_{(q,p)}(\tau_\partial>t)$ should be small as well but in this case its boundary behaviour shall mostly depend on the distance to the boundary. Therefore, the kinetic nature of the process exhibits multiple boundary behaviours depending on the distance to the boundary as well as the velocity.

The main objective of this work is to prove the two-sided estimates~\eqref{intro:two-sided} for the process~\eqref{eq:Langevin_intro} when the potential is quadratic, i.e. $V(q)=\alpha q^2/2+\beta q+\delta$ for some $\alpha\geq0$ and $\beta,\delta\in\mathbb{R}$. We shall also assume that $\sigma>0$ can be independent of $\gamma$ and $\gamma\in\mathbb{R}$ can be zero or negative. The strategy of this work is to first prove estimates on the first exit time probability as in~\eqref{eq:proba estimates intro} and then show that these estimates actually ensure the aimed two-sided estimates~\eqref{intro:two-sided} using estimates obtained on the transition density $\mathrm{p}_t^D$ in previous works~\cite{kFP,QSD1}. 

Nonetheless, let us mention that some recent works have focused on the analytic counter-part of~\eqref{eq:Langevin_intro}. Namely in~\cite{Vel}, the authors have shown that weak solutions to the parabolic equation $\partial_tu=\Delta_pu$ vanishing at the exiting boundary exhibit a Hölderian behaviour $(\alpha,3\alpha)$ in (position,velocity) on the boundary set with tangential velocities (called singular set), for any $\alpha<1/6$. Actually, it shall follow from the two-sided estimates~\eqref{intro:two-sided} and with the expression of $H,H^*$ that the Hölderian behaviour on the singular set is attained for $\alpha=1/6$ for the transition density $\mathrm{p}_t^D$.  
 
Some attention has also been drawn in the literature towards the long-time behavior of the following process when killed outside of the position half-line $(0,\infty)$:
\begin{equation}\label{eq:int_brownian_intro}
  \left\{
    \begin{aligned}
        &\mathrm{d}\widehat{q}_t=\widehat{p}_t \mathrm{d}t , \\
        &\mathrm{d}\widehat{p}_t=\mathrm{d}B_t.
    \end{aligned}
\right.  
\end{equation} 
The solution $(\widehat{q}_t)_{t\geq0}$ of~\eqref{eq:int_brownian_intro}, corresponds just to a time integrated Brownian process, up to the initial conditions $(\widehat{q}_0,\widehat{p}_0)$:
\begin{equation}\label{integrated brownian}
    \forall t\geq0,\qquad  \widehat{q}_t=\widehat{q}_0+t\widehat{p}_0+\int_0^tB_s\mathrm{d}s.
\end{equation}
Namely, previous works in~\cite{McKean,Goldman,Gorkov,Lefebvre,lachal1} have led to an  explicit expression of the law of $(\widehat{\tau}_0, B_{\widehat{\tau}_0})$ starting from any couple $(\widehat{q}_0,\widehat{p}_0)\in\mathbb{R}_+^*\times\mathbb{R}$, where $\widehat{\tau}_0=\inf\{t>0:\widehat{q}_t=0\}$. 

The long-time behaviour of the probability $\mathbb{P}_{(q,p)}(\widehat{\tau}_0>t)$ was also studied in~\cite{sinai} where the author showed that this probability behaved as $h(q,p)/t^{1/4}$ for $q>0,p\in\mathbb{R}$ when $t\rightarrow\infty$. Later, an explicit expression of the prefactor $h(q,p)$ was provided in the following works~\cite{isozaki1994,GP}. This function $h$ is crucial in this work as it will appear in the definition of the explicit two-sided estimates~\eqref{intro:two-sided}.  

The literature regarding the study of the first exit event is less extensive for bounded position-domains. For instance, there is no explicit description of the law of the first exit event $(\tau_\partial, B_{\tau_\partial})$ from $D:=(0,1)\times\mathbb{R}$. However, the long-time behavior of the first exit-time probability was studied in~\cite[Theorem 2.17]{QSD1} where it is shown that $\mathbb{P}_{(q,p)}(\tau_\partial>t)\underset{t\rightarrow\infty}{\sim}\phi(q,p)\mathrm{e}^{-\lambda_0 t}$ for some $\lambda_0>0$. The function $\phi$ is the eigenvector, up to a multiplicative constant, of the infinitesimal generator of~\eqref{eq:Langevin_intro}.  However no estimates on $\phi$ at the boundary $\partial D$ was obtained but shall follow in this work for~\eqref{eq:Langevin_intro} in the one-dimensional case, using~\eqref{intro:two-sided}.
  
This work is divided as follows: we shall first prove sharp estimates for the integrated Brownian process on the probability $\mathbb{P}_{(q,p)}(\widehat{\tau}_\partial>t)$ where $\widehat{\tau}_\partial=\inf\{t>0:\widehat{q}_t\notin(0,1)\}$. These estimates are then used to deduce two-sided estimates on its killed transition density. Then, using a Girsanov argument we will be able to extend these two-sided estimates to the process~\eqref{eq:Langevin_intro} when $V'(q)=\alpha q+\beta$. Finally, combining the long-time asymptotics of the killed semigroup obtained in~\cite{QSD1} and the two-sided estimates~\eqref{intro:two-sided} we shall obtain the uniform conditional ergodicity property stated in~\eqref{eq:cv unif vers qsd intro}.

We provide here a few notations used throughout this work before detailing in the next section the main results of this work.\medskip

\textbf{Notations:} For any subset $A$ of $\mathbb{R}^n$ ($n\geq1)$, we denote by:
\begin{enumerate}[label=(\roman*),ref=\roman*]
    \item $\mathcal{B}(A)$ the Borel $\sigma$-algebra on $A$,
    \item for $1 \le p\le \infty$, $\mathrm{L}^p(A)$ the set of
      $\mathrm{L}^p$ scalar-valued functions on $A$ and $\Vert\cdot\Vert_{p}$ the associated norm, 
    \item $\mathcal{C}(A)$ (resp. $\mathcal{C}^b(A)$) the set of
      scalar-valued continuous (resp. continuous and bounded) 
      functions on $A$,
    \item for $1 \le k\le \infty$, $\mathcal{C}^k(A)$ 
      the set of scalar-valued $\mathcal{C}^k$  functions on $A$.
     \item for $a,b\in \mathbb{R}$, $a \wedge b = \min(a,b)$ and $a\lor b=\max(a,b)$
\end{enumerate}
Finally for families of functions $(f_t)_{t>0}$, $(g_t)_{t>0}$ defined on $D\subset\mathbb{R}^n$, we shall say that for all $t>0$, $f_t(x)\propto g_t(x)$ if for all $t>0$, there exist constants $c_t,c'_t>0$ such that for all $x\in D$, 
$$c'_tg_t(x)\leq f_t(x)\leq c_tg_t(x).$$

\section{Main results}\label{sec:main results} 
Let $(\Omega,\mathcal{F},(\mathcal{F}_t)_{t\geq0},\mathbb{P})$ be the probability space under consideration. Consider the Langevin process $(q_t,p_t)_{t\geq0}$ solution to the following equation:
\begin{equation}\label{eq:Langevin}
  \left\{
    \begin{aligned}
        &\mathrm{d}q_t=p_t \mathrm{d}t , \\
        &\mathrm{d}p_t=-(\alpha q_t+\beta)\mathrm{d}t-\gamma p_t\mathrm{d}t+\sigma\mathrm{d}B_t,
    \end{aligned}
\right.  
\end{equation} 
where $\alpha\geq0$, $\beta,\gamma\in\mathbb{R}$, $\sigma>0$ and $(B_t)_{t\geq0}$ is a one-dimensional $(\mathcal{F}_t)_{t\geq0}$-Brownian process. Its infinitesimal generator, also called kinetic Fokker-Planck operator is defined for $(q,p)\in\mathbb{R}\times\mathbb{R}$ by:
\begin{equation}\label{generateur Langevin qsd}
    \mathcal{L} =  p\partial_q-(\alpha q+\beta)\partial_p -\gamma  p\partial_p+ \frac{\sigma^2}{2}\Delta_p, 
\end{equation}
with formal adjoint $\mathcal{L}^*$ in $\mathrm{L}^2(\mathrm{d}q\mathrm{d}p)$ given by:
\begin{equation}\label{generateur adjoint qsd}
    \mathcal{L}^*=-p\partial_q +(\alpha q+\beta)\partial_p+\gamma \partial_p(p  \cdot )+\frac{\sigma^2}{2}\Delta_p. 
\end{equation}

Let $\mathcal{O}:=(0,1)$ and $D:=\mathcal{O}\times\mathbb{R}$.
Let us introduce the following partition of $\partial D$: 
$$ \Gamma^+=\{ (0,p): p<0 \}\cup\{ (1,p): p>0 \},$$
$$ \Gamma^-=\{ (0,p): p>0 \}\cup\{ (1,p): p<0 \},$$
$$\Gamma^0:=\{ (0,0),(1,0) \}.$$
The sets $\Gamma^\pm$ correspond to the boundary points with exiting or entering velocity in $D$ and $\Gamma^0$ is the boundary set with zero velocity also called singular set.

Let $$\tau_\partial:=\inf\{t > 0: q_t\notin \mathcal{O}\}.$$
 
It was shown in~\cite[Theorem 2.20]{kFP} that the transition kernel of the Langevin process~\eqref{eq:Langevin} killed outside of $D$ admits a smooth transition density, i.e. there exists a function 
$$(t,(q,p),(q',p'))\in\mathbb{R}_+^*\times D\times D\mapsto \mathrm{p}_t^D(q,p,q',p')
    \in\mathcal{C}^\infty(\mathbb{R}_+^{*}\times D\times
    D)\cap\mathcal{C}^b(\mathbb{R}_+^*\times\overline{D}\times\overline{D})$$
such that for all $t>0$, $(q,p)\in \overline{D}$ and
    $A\in\mathcal{B}(D)$, 
\begin{equation}\label{transition kernel absorbed}
    \mathbb{P}_{(q,p)}((q_t,p_t)\in A,\tau_\partial>t)=\int_A \mathrm{p}_t^D(q,p,q',p') \mathrm{d}q'\mathrm{d}p',
\end{equation}
where we denote by $\mathbb{P}_{(q,p)}$ the probability measure under which $(q_0,p_0)=(q,p)$ almost surely.

The main result of this work is the two-sided estimates satisfied by the transition density $\mathrm{p}^D_t$. In order to state this result, we will first define the eigenvector $\phi$ (resp. $\psi$) of the generator $\mathcal{L}$ (resp. $\mathcal{L}^*$) obtained in~\cite[Theorem 2.13]{QSD1}.
 
Let $(\lambda_0,\phi)$ be the unique solution, up to a multiplicative constant on $\phi$, such that  $\phi\in\mathcal{C}^{2}(D)\cap\mathcal{C}^b(D\cup\Gamma^+)$ is a non-zero, non-negative classical solution to the following problem 
\begin{equation}\label{vp droite}
  \left\{
    \begin{aligned}
        \mathcal{L}\phi(q,p) &=-\lambda_0 \phi(q,p)&& \quad (q,p)\in D,\\
        \phi(q,p) &=0 && \quad (q,p)\in \Gamma^+.
    \end{aligned}
\right. 
\end{equation}   
Let also $(\lambda_0,\psi)$ be the unique solution, up to a multiplicative constant on $\psi$, such that  $\psi\in\mathcal{C}^{2}(D)\cap\mathcal{C}^b(D\cup\Gamma^-)$ is a non-zero, non-negative classical solution to:
\begin{equation}\label{vp gauche} 
  \left\{
    \begin{aligned}
        \mathcal{L}^*\psi(q,p) &=-\lambda_0 \psi(q,p)&& \quad (q,p)\in D,\\
        \psi(q,p) &=0 && \quad (q,p)\in \Gamma^-.
    \end{aligned}
\right. 
\end{equation}
In particular, if we choose the specific $\psi$ which satisfies $\int_D\psi=1$ we can define the following probability measure $\mu$ on $D$:
\begin{equation}\label{eq: def qsd mu}
    \mu(\mathrm{d}q\mathrm{d}p)=\psi(q,p)\mathrm{d}q\mathrm{d}p
\end{equation}
which is called the quasi-stationary distribution (QSD) on $D$ of the process~\eqref{eq:Langevin}~\cite[Theorem 2.14]{QSD1}. Let us now state the two-sided estimates obtained for $\mathrm{p}_t^D$.

\begin{theorem}[Two-sided estimates]\label{thm:two-sided p_t^D}
For all $t>0$, 
\begin{equation}\label{two-sided p_t^D}
    \mathrm{p}_t^D(q,p,q',p')\propto \phi(q,p)\psi(q',p').
\end{equation}
Besides, there exists $\alpha>0$ such that for all $t_0>0$ there exists $C>0$ such that for all $t\geq t_0$ and for all $(q,p),(q',p')\in D$, 
\begin{equation}\label{eq:long-time cv constants}
\left\vert\mathrm{e}^{\lambda_0t}\mathrm{p}_t^D(q,p,q',p')-\frac{\phi(q,p)\psi(q',p')}{\int_D\phi\psi}\right\vert\leq C\phi(q,p)\psi(q',p')\mathrm{e}^{-\alpha t}.
\end{equation} 
\end{theorem}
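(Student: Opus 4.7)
The plan is to reduce the density estimate to an exit-time estimate and then to bootstrap using the semigroup property, following the strategy announced at the end of the introduction. First I would treat the integrated Brownian motion $(\widehat q_t,\widehat p_t)$ of~\eqref{eq:int_brownian_intro} killed outside $(0,1)\times\mathbb{R}$, the key technical input being a sharp two-sided bound
\[
\mathbb{P}_{(q,p)}(\widehat\tau_\partial>t)\propto \widehat H(q,p),
\]
for an explicit function $\widehat H$, together with the dual counterpart $\widehat H^{*}(q,p)$ obtained by velocity reversal. On a half-line the prefactor is the Sinai-Isozaki-Groeneboom-Pitman function $h$ of~\cite{sinai,isozaki1994,GP}, and the explicit laws obtained in~\cite{McKean,Goldman,Gorkov,Lefebvre,lachal1} give a starting point; the new work is to combine the two half-line exit events into an estimate on $(0,1)$, carefully balancing position, velocity and its sign near the singular set $\Gamma^0$.

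Once this exit-time bound is in hand, the pointwise two-sided estimate on $\widehat{\mathrm p}_t^D$ follows from a three-step Chapman-Kolmogorov decomposition
\[
\widehat{\mathrm p}_{3t}^D(x,y)=\int_D\!\int_D \widehat{\mathrm p}_t^D(x,z)\,\widehat{\mathrm p}_t^D(z,w)\,\widehat{\mathrm p}_t^D(w,y)\,dz\,dw,
\]
in which the middle factor is bounded above and below by positive constants on a well-chosen compact set $K\subset D$ thanks to the interior positivity and smoothness of $\widehat{\mathrm p}_t^D$ proved in~\cite{kFP,QSD1}, while the outer integrals are pinched between multiples of $\widehat H(x)$ and $\widehat H^{*}(y)$ via the identity $\int_D\widehat{\mathrm p}_t^D(q,p,q',p')\,\mathrm{d}q\,\mathrm{d}p=\mathbb{P}_{(q',-p')}(\widehat\tau_\partial>t)$, which encodes time-reversal of the integrated Brownian motion as a velocity flip. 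To pass to the Langevin process~\eqref{eq:Langevin} I would apply Girsanov on the killed path space: since the drift $-(\alpha q+\beta)-\gamma p$ is affine, the Radon-Nikodym density is Gaussian in $p$, and on $\{\tau_\partial>t\}$ the position stays in $(0,1)$, so the resulting exponent admits deterministic two-sided bounds depending only on $t$. This yields $\mathrm p_t^D(q,p,q',p')\propto \widehat H(q,p)\widehat H^{*}(q',p')$, and the identification $\widehat H\propto \phi$, $\widehat H^{*}\propto\psi$ is then forced by the long-time asymptotic $\mathbb{P}_{(q,p)}(\tau_\partial>t)\sim\phi(q,p)e^{-\lambda_0 t}$ of~\cite[Theorem 2.17]{QSD1} applied to the direct and to the time-reversed processes.

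For the quantitative long-time statement~\eqref{eq:long-time cv constants}, I would combine the two-sided estimate~\eqref{two-sided p_t^D} just obtained with a test-function version of the spectral asymptotic of~\cite{QSD1}, namely the existence of $\alpha>0$ and $C>0$ such that for every bounded measurable $f$,
\[
\left|e^{\lambda_0 t}\mathbb{E}_{(q,p)}\!\left[f(q_t,p_t)\mathbf{1}_{\tau_\partial>t}\right]-\frac{\phi(q,p)\int_D f\psi}{\int_D\phi\psi}\right|\leq C\,\phi(q,p)\,\|f/\psi\|_\infty\,e^{-\alpha t},
\]
applied to $f(\cdot)=\mathrm p_{t_0}^D(\cdot,q',p')$ for a fixed small $t_0>0$; by~\eqref{two-sided p_t^D} one has $\|f/\psi\|_\infty\leq C_{t_0}\psi(q',p')$, and the semigroup identity $\mathrm p_{t+t_0}^D(q,p,q',p')=\mathbb{E}_{(q,p)}[f(q_t,p_t)\mathbf{1}_{\tau_\partial>t}]$ then delivers~\eqref{eq:long-time cv constants} after relabelling $t$.

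The main obstacle is the first step: producing an exit-time estimate that is uniform in $(q,p)\in D$ and simultaneously captures the three qualitatively different boundary regimes illustrated by the points $A,B,C$ of Figure~\ref{Fig:exit event}. The half-line formulae of~\cite{isozaki1994,GP} provide the correct asymptotic away from $\Gamma^0$, but gluing them into a two-sided bound on $(0,1)$ that remains sharp up to the corners $\Gamma^0$, where the Hölder exponent $1/6$ anticipated from~\cite{Vel} must be recovered, is the genuinely new analytic work and the technical heart of the proof.
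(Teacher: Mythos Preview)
Your overall architecture matches the paper's, and the long-time step is essentially the paper's argument (apply the spectral asymptotic of~\cite{QSD1} to $f=\mathrm p_{t_0}^D(\cdot,q',p')$ and use Chapman--Kolmogorov). There is, however, a genuine gap in your Girsanov step.

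You write that on $\{\tau_\partial>t\}$ the position stays in $(0,1)$ and therefore ``the resulting exponent admits deterministic two-sided bounds depending only on $t$.'' This is false: the drift you remove is $-(\alpha q+\beta)-\gamma p$, so the Girsanov exponent contains the term $-\tfrac{1}{2\sigma^2}\int_0^t(\alpha\widehat q_s+\beta+\gamma\widehat p_s)^2\,\mathrm d s$. After integration by parts the $q$-contributions are indeed bounded on the survival event, but one is left with a factor
\[
\exp\!\Big(-\frac{\alpha+\gamma^2/2}{\sigma^2}\int_0^t(\widehat p_s^{\sigma})^2\,\mathrm d s\Big),
\]
and the velocity $\widehat p_s^{\sigma}$ is \emph{not} bounded on $\{\widehat\tau_\partial^\sigma>t\}$. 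This factor is $\le 1$, so the upper bound goes through, but it has no uniform positive lower bound, and your two-sided conclusion $\mathrm p_t^D\propto \widehat H\,\widehat H^*$ does not follow. The paper does not bound this term deterministically; instead it introduces an auxiliary process~\eqref{eq:integ orstein uhlenbeck} whose Girsanov density against $(\widehat q^\sigma,\widehat p^\sigma)$ produces exactly such an exponential, proves separate exit-time estimates for that process (Lemma~\ref{lem:first exit time integ orstein uhlenbeck}), and then shows (Lemma~\ref{lem:girsanov exponentielle estimate}) that the expectation of the exponential factor on the survival event is itself $\propto$ an explicit function of $(q,p)$. This is where the modified function $G_{\lambda,\sigma}$ and the prefactor $T_{\alpha,\beta,\gamma,\sigma}$ in~\eqref{eq:def H_alpha,beta,gamma,sigma} come from; the correct $H_{\alpha,\beta,\gamma,\sigma}$ is \emph{not} simply the Brownian $\widehat H$.

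A smaller issue: in your long-time step you assume a bound with $\|f/\psi\|_\infty$ on the right and then claim $\|\mathrm p_{t_0}^D(\cdot,q',p')/\psi\|_\infty\le C_{t_0}\psi(q',p')$. By~\eqref{two-sided p_t^D} this would require $\phi/\psi\in L^\infty(D)$, which fails here (see Corollary~\ref{coroll phi psi} and the remark following it). The paper avoids this by working with the $L^1$-type norm $\|\cdot\|_{H_{\alpha,\beta,\gamma,\sigma}}$ in Proposition~\ref{prop:long time semigroup} and using the duality~\eqref{eq:reversibility} to turn $\int_D\psi\,\mathrm p_s^D(\cdot,q',p')$ into $e^{-\lambda_0 s}\psi(q',p')$.
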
 
  
Furthermore, we can exhibit sharp and explicit estimates satisfied by the eigenvectors $\phi,\psi$. In order to do this, let us define a few functions. Let
\begin{align*}
g:z\in\mathbb{R}&\mapsto 
    \begin{cases}
        \left(\frac{2}{9}\right)^{1/6}zU\left(\frac{1}{6},\frac{4}{3},\frac{2}{9}z^3\right)&z>0,\\
        -\left(\frac{2}{9}\right)^{1/6}\frac{1}{6}zV\left(\frac{1}{6},\frac{4}{3},\frac{2}{9}z^3\right)&z<0,\\ 
        \left(\frac{2}{9}\right)^{-1/6}\Gamma(1/3)/\Gamma(1/6),&z=0,
\end{cases}
\end{align*}
where $U,V$ are the confluent hypergeometric functions, see~\cite[p. 256]{Olver}. The function $g$ also satisfies the following asymptotics see~\cite[Lemma 2.1, Sections 3 and 5]{GP}:
\begin{remark}[Asymptotics of $g$]\label{asymptotics g} 
The function $g$ is a positive, analytic, non-decreasing function satisfying
$$g(z)\sim 
    \begin{cases}
        \sqrt{z} &z\rightarrow+\infty,\\
        \left(\frac{2}{9}\right)^{-5/6}\frac{1}{6}\mathrm{e}^{-2\vert z\vert^3/9}\vert z\vert^{-5/2} &z\rightarrow-\infty. 
\end{cases}$$
\end{remark}
Let us define
\begin{equation}\label{def H with g}
    h:(q,p)\in D\mapsto q^{1/6}g(p/q^{1/3}).
\end{equation}
The function $h$ is used in~\cite{isozaki1994} and~\cite[Lemma 2.1]{GP} to characterize the long-time behaviour of \mbox{$\mathbb{P}_{(q,p)}(\widehat{\tau}_0>t)$} for the integrated Brownian process~\eqref{eq:int_brownian_intro} through the asymptotics:
\begin{equation}\label{long time asymptotics}
    \forall (q,p)\in \mathbb{R}_+^*\times\mathbb{R},\qquad\mathbb{P}_{(q,p)}(\widehat{\tau}_0>t)\underset{t\rightarrow\infty}{\sim}\frac{3\Gamma(1/4)}{2^{3/4}\pi^{3/2}}\frac{h(q,p)}{t^{1/4}}.
\end{equation}
Symmetrically, the function $h(1-q,-p)$ can be used to describe the long-time asymptotics of \mbox{$\mathbb{P}_{(q,p)}(\widehat{\tau}_1>t)$} where $\widehat{\tau}_1=\inf\{t>0:\widehat{q}_t=1\}$. This allows us to think that the function $H$ defined as follows:
\begin{equation}\label{def H}
    \forall (q,p)\in D,\qquad H(q,p)=h(q,p)\wedge h(1-q,-p) 
\end{equation}
captures the behaviour in $(0,1)$ of the probability $\mathbb{P}_{(q,p)}(\widehat{\tau}_\partial>t)$ for a fixed $t>0$. We shall in fact prove this result and later extend it to the process~\eqref{eq:Langevin}, using a Girsanov argument, obtaining the following function:  
\begin{equation}\label{eq:def H_alpha,beta,gamma,sigma}
    H_{\alpha,\beta,\gamma,\sigma}(q,p)=T_{\alpha,\beta,\gamma,\sigma}(q,p)\,G_{\sqrt{\alpha+\gamma^2/2}/\sqrt{11},\sigma}(q,p),
\end{equation}
where for $(q,p)\in D$, and $\lambda\geq0$, $\sigma>0$,
\begin{equation}\label{eq:def G}
    G_{\lambda,\sigma}(q,p)=h\left(q,(p+3\lambda q)/\sigma^{2/3}\right)\land\left(\mathrm{e}^{-3\lambda p/\sigma^2} h\left(1-q,-(p+3\lambda q)/\sigma^{2/3}\right)\right),
\end{equation}
$$T_{\alpha,\beta,\gamma,\sigma}(q,p)=\mathrm{exp}\left(-\frac{p^2}{\sigma^2}\left(\frac{\gamma}{2}-2\sqrt{\frac{\alpha+\gamma^2/2}{11}}\right)-\frac{qp}{\sigma^2}\left(\frac{8\alpha}{11}-\frac{3\gamma^2}{22}\right)-\beta\frac{p}{\sigma^2}\right).$$

\begin{corollary}[Eigenvectors estimates]\label{coroll phi psi}
$$\phi(q,p)\propto H_{\alpha,\beta,\gamma,\sigma}(q,p),\qquad\psi(q,p)\propto H_{\alpha,\beta,-\gamma,\sigma}(q,-p).$$  
\end{corollary}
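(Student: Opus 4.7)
The plan is to deduce both estimates from Theorem~\ref{thm:two-sided p_t^D} by integrating out one of the two spatial variables in the two-sided bounds and matching against the first-exit-time estimate established earlier in the paper.

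\emph{Estimate on $\phi$.} First I would fix any $t>0$ and integrate~\eqref{two-sided p_t^D} against $(q',p')\in D$. Using the normalization $\int_D \psi = 1$ coming from~\eqref{eq: def qsd mu}, this should yield
\[
  \mathbb{P}_{(q,p)}(\tau_\partial > t) \;=\; \int_D \mathrm{p}_t^D(q,p,q',p')\,\mathrm{d}q'\mathrm{d}p' \;\propto\; \phi(q,p).
\]
I would then invoke the first-exit-time estimate $\mathbb{P}_{(q,p)}(\tau_\partial > t)\propto H_{\alpha,\beta,\gamma,\sigma}(q,p)$, which is the main technical result proven earlier in the paper: one first handles the integrated Brownian motion~\eqref{eq:int_brownian_intro} via the explicit asymptotics of $g$ collected in Remark~\ref{asymptotics g} together with the tail~\eqref{long time asymptotics}, and then absorbs the drift terms $-(\alpha q+\beta+\gamma p)$ through a Girsanov change of measure, which is precisely what produces the prefactor $T_{\alpha,\beta,\gamma,\sigma}$ in~\eqref{eq:def H_alpha,beta,gamma,sigma}. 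Combining the two $\propto$ relations gives $\phi(q,p)\propto H_{\alpha,\beta,\gamma,\sigma}(q,p)$.

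\emph{Estimate on $\psi$.} For $\psi$ I would argue by duality with the Langevin process whose friction coefficient is $-\gamma$ instead of $\gamma$. Let $\mathcal{L}^{(-\gamma)}$ denote the generator~\eqref{generateur Langevin qsd} with $\gamma$ replaced by $-\gamma$, and let $S:f(q,p)\mapsto f(q,-p)$ be the velocity-reversal operator. A direct computation from~\eqref{generateur Langevin qsd}--\eqref{generateur adjoint qsd} gives the identity
\[
  \mathcal{L}^*(Sf)(q,p)\;=\;(\mathcal{L}^{(-\gamma)}f)(q,-p)\;+\;\gamma\,f(q,-p),
\]
while $S$ manifestly swaps $\Gamma^+$ and $\Gamma^-$. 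Denoting by $\phi^{(-\gamma)}$ the right eigenvector from~\eqref{vp droite} associated to the Langevin process with parameters $(\alpha,\beta,-\gamma,\sigma)$, this identity shows that $(q,p)\mapsto \phi^{(-\gamma)}(q,-p)$ is a non-negative, non-zero classical solution of the adjoint eigenvalue problem~\eqref{vp gauche}. The uniqueness of the principal pair $(\lambda_0,\psi)$ proved in~\cite[Theorem 2.13]{QSD1} then forces $\psi(q,p)\propto \phi^{(-\gamma)}(q,-p)$ (and, as a byproduct, the eigenvalue relation $\lambda_0^{(-\gamma)}=\lambda_0+\gamma$). Applying the first part with $\gamma$ replaced by $-\gamma$ yields $\phi^{(-\gamma)}(q,p)\propto H_{\alpha,\beta,-\gamma,\sigma}(q,p)$, whence the claimed $\psi(q,p)\propto H_{\alpha,\beta,-\gamma,\sigma}(q,-p)$.

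\emph{Main obstacle.} The corollary itself is essentially bookkeeping, once Theorem~\ref{thm:two-sided p_t^D} and the first-exit estimate are in hand; the genuine difficulty lies in the earlier sections, where the absence of ellipticity of $\mathcal{L}$ in the velocity direction rules out the Green-function techniques used for elliptic diffusions, and one must instead exploit directly the explicit Kummer-function structure of $g$ and the integrated Brownian motion~\eqref{eq:int_brownian_intro} to capture the \emph{sharp} boundary behaviour, and then transfer the bounds to~\eqref{eq:Langevin} via Girsanov. Within the corollary, the only subtle point is the duality step for $\psi$, which is cleanly handled by the uniqueness clause of~\cite[Theorem 2.13]{QSD1} (it is what absorbs the eigenvalue shift $\lambda_0^{(-\gamma)}=\lambda_0+\gamma$ coming from the zeroth-order term in the identity above).
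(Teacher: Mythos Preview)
Your argument for $\phi$ is exactly the paper's: integrate the two-sided estimate~\eqref{two-sided p_t^D} over the second variable and match with Proposition~\ref{prop:estimates first exit time Langevin}.

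For $\psi$, your route is correct but different from the paper's. You work at the level of generators: the identity $\mathcal{L}^*(Sf)(q,p)=(\mathcal{L}^{(-\gamma)}f)(q,-p)+\gamma f(q,-p)$ is right, $S$ does swap $\Gamma^\pm$, and the uniqueness clause of~\cite[Theorem 2.13]{QSD1} then pins down $\psi\propto S\phi^{(-\gamma)}$ together with the eigenvalue shift. The paper instead works at the level of transition densities: it invokes the reversibility relation $\mathrm{p}_t^D(x,y)=\mathrm{e}^{\gamma t}\tilde{\mathrm{p}}_t^D(y,x)$ (equation~\eqref{eq:reversibility}), integrates~\eqref{two-sided p_t^D} over the \emph{first} variable to get $\mathbb{P}_{(q',p')}(\tilde{\tau}_\partial>t)\propto\psi(q',p')$, observes that $(\tilde{q}_t,-\tilde{p}_t)$ is a Langevin process with friction $-\gamma$, and applies Proposition~\ref{prop:estimates first exit time Langevin} directly. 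The paper's probabilistic argument avoids re-invoking the spectral uniqueness theorem and stays parallel to the $\phi$ computation; your PDE/spectral argument is more self-contained and makes the eigenvalue relation $\lambda_0^{(-\gamma)}=\lambda_0+\gamma$ explicit. Both are short and either would be acceptable here.
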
 
\begin{remark}[Comparability]
In particular, in contrary to the elliptic case, see for instance~\cite[Proposition 3]{GQZ}, we do not have that $\phi(q,p)\propto\psi(q,p)$. However, one has that $\phi(q,-p)\propto\psi(q,p)$ if $\gamma=0$.
\end{remark}
\begin{remark}[Integrability of $H_{\alpha,\beta,\gamma,\sigma}$]\label{rmk: prop H}
Notice that $H_{\alpha,\beta,\gamma,\sigma}\in\mathrm{L}^r(D)$ for any $r\in(0,\infty]$. In fact, since $g$ is non-decreasing, for all $(q,p)\in D$,
\begin{align*}
    G_{\lambda,\sigma}(q,p)&\leq \mathrm{e}^{9\lambda^2q/\sigma^2}\mathbb{1}_{p+3\lambda q\geq0}\mathrm{e}^{-3\lambda(p+3\lambda q)/\sigma^2}h(1-q,-(p+3\lambda q)/\sigma^{2/3})+\mathbb{1}_{p+3\lambda q\leq0}h(q,(p+3\lambda q)/\sigma^{2/3})\\
    &\leq\mathrm{e}^{9\lambda^2/\sigma^2}\left(\mathbb{1}_{p+3\lambda q\geq0}(1-q)^{1/6}g(-(p+3\lambda q)/\sigma^{2/3})+\mathbb{1}_{p+3\lambda q\leq0}q^{1/6}g((p+3\lambda q)/\sigma^{2/3})\right)\\
    &\leq\mathrm{e}^{9\lambda^2/\sigma^2} g(-\vert p+3\lambda q\vert/\sigma^{2/3}). 
\end{align*}
Therefore, the asymptotics of $g$ in Remark~\ref{asymptotics g} and the expression of $H_{\alpha,\beta,\gamma,\sigma}\in\mathrm{L}^r(D)$ in~\eqref{eq:def H_alpha,beta,gamma,sigma} ensure that $H_{\alpha,\beta,\gamma,\sigma}\in\mathrm{L}^r(D)$ for any $r\in(0,\infty]$.
\end{remark} 
Furthermore, Theorem~\ref{thm:two-sided p_t^D} allows us to define the semigroup associated to the killed transition kernel~\eqref{transition kernel absorbed} on the Banach space $\mathrm{L}_{H_{\alpha,\beta,\gamma,\sigma}}$ given by:

$$\mathrm{L}_{H_{\alpha,\beta,\gamma,\sigma}}:=\{f\text{ measurable }: (q',p')\in D\mapsto \vert f(q',p')\vert H_{\alpha,\beta,-\gamma,\sigma}(q',-p')\in\mathrm{L}^1(D)\},$$
endowed with the norm $\Vert f\Vert_{H_{\alpha,\beta,\gamma,\sigma}}:=\int_D\vert f(q',p')\vert H_{\alpha,\beta,-\gamma,\sigma}(q',-p')\mathrm{d}q'\mathrm{d}p'$.

\begin{remark}[Set $\mathrm{L}_{H_{\alpha,\beta,\gamma,\sigma}}$]\label{rk:integrability against H} Using the inequality on $H_{\alpha,\beta,\gamma,\sigma}$ provided in Remark~\ref{rmk: prop H} along with the asymptotics of $g$ in Remark~\ref{asymptotics g}, one deduces that $\mathrm{L}_{H_{\alpha,\beta,\gamma,\sigma}}$ contains the set of functions:
$$(q,p)\in D\mapsto\eta(q)\mathrm{e}^{c\vert p\vert^3/\sigma^2},$$
where $\eta\in\mathrm{L}^1(\mathcal{O})$ and $c\in[0,2/9)$.
\end{remark}

The semigroup $(P^D_t)_{t\geq0}$ associated to the transition kernel~\eqref{transition kernel absorbed} in $\mathrm{L}_{H_{\alpha,\beta,\gamma,\sigma}}$ is defined as follows: $P^D_0f=f$ for $f\in \mathrm{L}_{H_{\alpha,\beta,\gamma,\sigma}}$ and for $t>0$, 
\begin{equation}\label{eq:def semigroup}
    \forall f\in \mathrm{L}_{H_{\alpha,\beta,\gamma,\sigma}}, \forall (q,p)\in D,\qquad P^D_tf(q,p) =\mathbb{E}_{(q,p)}\left[f(q_t,p_t)\mathbb{1}_{\tau_\partial>t}\right]=\int_D f(q',p')\mathrm{p}^D_t(q,p,q',p')\mathrm{d}q'\mathrm{d}p'.
\end{equation}

As a result, Theorem~\ref{thm:two-sided p_t^D} and Corollary~\ref{coroll phi psi} ensure the following immediate  corollary.
\begin{corollary}[Killed semigroup estimates]\label{corollary} 
Let $f$ be a non-negative function in $\mathrm{L}_{H_{\alpha,\beta,\gamma,\sigma}}$, then for all $t>0$, there exist $c_t>0$, $c'_t>0$ such that for all $f\in\mathrm{L}_{H_{\alpha,\beta,\gamma,\sigma}}$,
$$\forall (q,p)\in D,\qquad c'_t\Vert f\Vert_{H_{\alpha,\beta,\gamma,\sigma}}H_{\alpha,\beta,\gamma,\sigma}(q,p)\leq P^D_tf(q,p)\leq c_t\Vert f\Vert_{H_{\alpha,\beta,\gamma,\sigma}}H_{\alpha,\beta,\gamma,\sigma}(q,p).$$
\end{corollary}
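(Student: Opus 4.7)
The plan is to reduce the statement to an almost immediate consequence of Theorem~\ref{thm:two-sided p_t^D} and Corollary~\ref{coroll phi psi}, combined with the integral definition~\eqref{eq:def semigroup} of the semigroup $(P_t^D)_{t\geq0}$.

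First I would combine the two-sided estimate~\eqref{two-sided p_t^D} on $\mathrm{p}_t^D$ with the equivalences $\phi(q,p)\propto H_{\alpha,\beta,\gamma,\sigma}(q,p)$ and $\psi(q',p')\propto H_{\alpha,\beta,-\gamma,\sigma}(q',-p')$ furnished by Corollary~\ref{coroll phi psi}. This yields, for each fixed $t>0$, constants $c_t,c'_t>0$ such that for all $(q,p),(q',p')\in D$,
$$c'_t\,H_{\alpha,\beta,\gamma,\sigma}(q,p)\,H_{\alpha,\beta,-\gamma,\sigma}(q',-p')\leq \mathrm{p}_t^D(q,p,q',p')\leq c_t\,H_{\alpha,\beta,\gamma,\sigma}(q,p)\,H_{\alpha,\beta,-\gamma,\sigma}(q',-p').$$

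Next, since $f\geq 0$ lies in $\mathrm{L}_{H_{\alpha,\beta,\gamma,\sigma}}$, the map $(q',p')\mapsto f(q',p')H_{\alpha,\beta,-\gamma,\sigma}(q',-p')$ is integrable on $D$, and by the very definition of the norm on $\mathrm{L}_{H_{\alpha,\beta,\gamma,\sigma}}$ its integral equals $\Vert f\Vert_{H_{\alpha,\beta,\gamma,\sigma}}$. Multiplying the double inequality above by $f(q',p')\geq 0$, integrating in $(q',p')\in D$, and identifying the resulting middle term with $P_t^Df(q,p)$ via~\eqref{eq:def semigroup}, I obtain
$$c'_t\,\Vert f\Vert_{H_{\alpha,\beta,\gamma,\sigma}}\,H_{\alpha,\beta,\gamma,\sigma}(q,p)\leq P_t^Df(q,p)\leq c_t\,\Vert f\Vert_{H_{\alpha,\beta,\gamma,\sigma}}\,H_{\alpha,\beta,\gamma,\sigma}(q,p),$$
which is exactly the claim.

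There is no real obstacle: the entire content is already packaged in Theorem~\ref{thm:two-sided p_t^D} and Corollary~\ref{coroll phi psi}; the only thing to verify is that the factor appearing in the upper and lower bounds of $\mathrm{p}_t^D$ pairs cleanly with the weight defining $\Vert\cdot\Vert_{H_{\alpha,\beta,\gamma,\sigma}}$. This pairing is built in by design, since the $\psi$-factor is equivalent to $H_{\alpha,\beta,-\gamma,\sigma}(q',-p')$ and the norm on $\mathrm{L}_{H_{\alpha,\beta,\gamma,\sigma}}$ is precisely $\int_D\vert f(q',p')\vert H_{\alpha,\beta,-\gamma,\sigma}(q',-p')\,\mathrm{d}q'\mathrm{d}p'$. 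This explains why the result appears as an immediate corollary of the two-sided density estimates.
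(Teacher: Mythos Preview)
Your proposal is correct and matches the paper's approach exactly: the paper states the corollary as an immediate consequence of Theorem~\ref{thm:two-sided p_t^D} and Corollary~\ref{coroll phi psi} without giving any further argument. Your write-up simply spells out the one-line integration step that the paper leaves implicit.
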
 
\begin{remark}[Hölder property]
Given the asymptotics of $g$ when $p\rightarrow+\infty$ in Remark~\ref{asymptotics g}, there exists $C>0$ such that   
\begin{align*}
    \forall q>0,p\in\mathbb{R},\qquad h(q,p)&\leq \mathbb{1}_{p\leq0}q^{1/6}g(0)+\mathbb{1}_{p\geq0}C(q^{1/6}+\sqrt{p})\\
    &\leq C'(q^{1/6}+\sqrt{p_+}),
\end{align*}
for some constant $C'>0$ and $p_+=p\lor0$. As a result, in the case $\alpha=\beta=\gamma=0$, by Corollary~\ref{corollary} for any $t>0$, there exists $c_t>0$ such that for all $f\in\mathrm{L}_{H_{0,0,0,\sigma}}$, for all $(q,p)\in D$, 
$$\left\vert P^D_tf(q,p)\right\vert\leq c_t\Vert f\Vert_{H_{0,0,0,\sigma}}\left(\left(q\wedge(1-q)\right)^{1/6}+\sqrt{|p|}\right).$$
This result can be related to the work by Hwang, Jang and Velazquez in~\cite{Vel} where the authors showed that when $\gamma=\alpha=\beta=0$ weak solutions to $\partial_tu=\mathcal{L}u$ with zero boundary condition on $\Gamma^+$ and initial condition $f\in\mathrm{L}^1(D)\cap\mathrm{L}^\infty(D)$ are $(\alpha,3\alpha)$-Hölderian at the boundary $\Gamma_0$ for any $\alpha\in(0,1/6)$. Here, we are able to show that this Hölder regularity is actually attained for $\alpha=1/6$.
\end{remark}
 
The strategy of the proof of Theorem~\ref{thm:two-sided p_t^D}  consists in showing first sharp estimates on the first exit time probability as in~\eqref{eq:proba estimates intro}. Second, these estimates are shown to ensure the two-sided estimates using the proposition stated below. Finally, the long-time asymptotics~\eqref{eq:long-time cv constants} follows from the long-time convergence of the killed semigroup described in~\cite{QSD1}. Let us emphasize that the equivalence between two-sided estimates and sharp estimates on the first exit time probability remains also valid in large dimension and for more complex forces. Therefore, we shall state the proposition below for general multidimensional Langevin processes.
\begin{proposition}[Equivalence of two-sided estimates]\label{prop:first exit estimates implies two sided}
Let $d\geq1$. Let $\mathcal{O}$ be a $\mathcal{C}^2$ bounded connected open set of $\mathbb{R}^d$. Let $D=\mathcal{O}\times\mathbb{R}^d$ and let $(q^{F,\gamma,\sigma}_t,p^{F,\gamma,\sigma}_t)_{t\geq0}$ be the process in $\mathbb{R}^d\times\mathbb{R}^d$ solution to
\begin{equation}\label{eq:Langevin multi dimensional}
  \left\{
    \begin{aligned}
        &\mathrm{d}q^{F,\gamma,\sigma}_t=p^{F,\gamma,\sigma}_t \mathrm{d}t , \\
        &\mathrm{d}p^{F,\gamma,\sigma}_t=F(q^{F,\gamma,\sigma}_t)\mathrm{d}t-\gamma p^{F,\gamma,\sigma}_t\mathrm{d}t+\sigma\mathrm{d}B_t,
    \end{aligned}
\right.  
\end{equation}
where $F\in\mathcal{C}^\infty(\mathbb{R}^d)$, $\gamma\in\mathbb{R}$ and $\sigma>0$. Let $\tau^{F,\gamma,\sigma}_\partial=\inf\{t>0:q^{F,\gamma,\sigma}_t\notin\mathcal{O}\}.$ Assume that there exists a function $H_{\alpha,\beta,\gamma,\sigma}$ in $D$ such that for all $t>0$, 
$$\mathbb{P}_{(q,p)}(\tau^{F,\gamma,\sigma}_\partial>t)\propto H_{\alpha,\beta,\gamma,\sigma}(q,p)\quad\text{and}\quad\mathbb{P}_{(q,p)}(\tau^{F,-\gamma,\sigma}_\partial>t)\propto H_{\alpha,\beta,-\gamma,\sigma}(q,p).$$
Then, for all $t>0$, 
$$\mathrm{p}_t^D(q,p,q',p')\propto \phi(q,p)\psi(q',p'),$$
where the eigenvectors $\phi$ and $\psi$ are defined analogously to~\eqref{vp droite} and~\eqref{vp gauche} using the infinitesimal generator of~\eqref{eq:Langevin multi dimensional}.
\end{proposition}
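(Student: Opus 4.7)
The plan is to derive the two-sided estimate on $\mathrm{p}_t^D$ from the two hypothesized first-exit estimates, via the Chapman--Kolmogorov decomposition of the killed semigroup, after first identifying the eigenvectors $\phi,\psi$ with the prescribed $H$-functions.

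\emph{Identification of the eigenvectors.} From the long-time asymptotics of the killed semigroup established in~\cite{QSD1} one has $\mathbb{P}_{(q,p)}(\tau_\partial^{F,\gamma,\sigma}>t)\underset{t\to\infty}{\sim}c_\infty e^{-\lambda_0 t}\phi(q,p)$, so comparing with the hypothesis at any fixed large time yields $\phi\propto H_{\alpha,\beta,\gamma,\sigma}$. For $\psi$, I would use the $\mathrm{L}^2(dq\,dp)$-adjoint identity $\mathrm{p}_t^D(q,p,q',p')=\mathrm{p}_t^{*,D}(q',p',q,p)$ together with the computation $\mathcal{L}^*=\mathcal{K}+\gamma d$, where $\mathcal{K}$ is conjugate, through the reflection $p\mapsto-p$, to the generator of~\eqref{eq:Langevin multi dimensional} with friction $-\gamma$ in place of $\gamma$. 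This yields the pointwise identity
\[\mathrm{p}_t^D(q,p,q',p')=e^{\gamma d\,t}\,\mathrm{p}_t^{-\gamma,D}(q',-p',q,-p),\]
where $\mathrm{p}_t^{-\gamma,D}$ denotes the killed density of the $-\gamma$ process. Integrating over $(q,p)$ and invoking the $-\gamma$ hypothesis gives
\[\int_D\mathrm{p}_t^D(q,p,q',p')\,dq\,dp\;\propto\;H_{\alpha,\beta,-\gamma,\sigma}(q',-p');\]
on the other hand, the same long-time asymptotics applied to the adjoint semigroup imply that this integral behaves as $c\,e^{-\lambda_0 t}\psi(q',p')$ when $t\to\infty$, whence $\psi(q',p')\propto H_{\alpha,\beta,-\gamma,\sigma}(q',-p')$.

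\emph{Upper bound.} I would apply Chapman--Kolmogorov at time $3t$,
\[\mathrm{p}_{3t}^D(q,p,q',p')=\iint\mathrm{p}_t^D(q,p,q_1,p_1)\,\mathrm{p}_t^D(q_1,p_1,q_2,p_2)\,\mathrm{p}_t^D(q_2,p_2,q',p')\,dq_1\,dp_1\,dq_2\,dp_2,\]
and bound the middle factor by $M_t:=\sup_{\overline D\times\overline D}\mathrm{p}_t^D<\infty$, available from the pointwise estimates in~\cite{kFP}. The outer variables decouple and reproduce the two integrals identified in the previous step, yielding $\mathrm{p}_{3t}^D(q,p,q',p')\leq C_t\phi(q,p)\psi(q',p')$.

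\emph{Lower bound.} I would fix a compact set $K\Subset D$ on which the interior Harnack / Hörmander hypoellipticity estimates from~\cite{kFP} give $\mathrm{p}_t^D(x,x')\geq c_t>0$ for all $x,x'\in K$. Using the same Chapman--Kolmogorov decomposition,
\[\mathrm{p}_{3t}^D(q,p,q',p')\geq c_t\Big(\int_K\mathrm{p}_t^D(q,p,q_1,p_1)\,dq_1\,dp_1\Big)\Big(\int_K\mathrm{p}_t^D(q_2,p_2,q',p')\,dq_2\,dp_2\Big).\]
The main obstacle is to show that each of these $K$-restricted integrals is itself bounded below by a positive constant times $\phi(q,p)$, respectively $\psi(q',p')$, \emph{uniformly} in $D$. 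This reduces to a Doeblin-type minorization: for some $t_0>0$ and $c>0$,
\[\forall(q,p)\in D,\quad\mathbb{P}_{(q,p)}\big((q_{t_0},p_{t_0})\in K,\ \tau_\partial>t_0\big)\geq c\,\mathbb{P}_{(q,p)}(\tau_\partial>t_0),\]
together with its analogue for the $-\gamma$ process. Both are precisely the minorization conditions established in~\cite{QSD1} for the conditional ergodicity analysis; invoking them closes the lower bound. Rescaling $t\mapsto t/3$ then yields the two-sided estimate for all $t>0$.
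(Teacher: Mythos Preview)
Your overall skeleton---the three-piece Chapman--Kolmogorov decomposition, bounding the middle kernel above by its supremum and below by its infimum on a compact, and the adjoint identity $\mathrm{p}_t^D(x,y)=e^{\gamma d\,t}\tilde{\mathrm{p}}_t^D(y,x)$---is exactly what the paper does. The gap lies in how you tie the outer integrals to $\phi$ and $\psi$.

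The identification step does not go through as written. The long-time asymptotics available from~\cite{QSD1} carry only an \emph{additive} uniform error, $\bigl|\mathbb{P}_{(q,p)}(\tau_\partial>t)-c\,e^{-\lambda_0 t}\phi(q,p)\bigr|\le Ce^{-(\lambda_0+\alpha)t}$ (equivalently, pointwise equivalence as $t\to\infty$). Near $\Gamma^+$, where $\phi\to0$, the error term dominates and you cannot extract the multiplicative bound $\mathbb{P}_{(q,p)}(\tau_\partial>t)\le C_t\,\phi(q,p)$ that your upper bound requires. The same problem afflicts the lower bound: the uniform Doeblin minorization you invoke is \emph{not} in~\cite{QSD1}. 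The paper is explicit that the prefactors in~\cite{QSD1,GuiNectoux} blow up at $\partial D$, and that the uniform condition~\eqref{eq:return compact} is a new consequence of the present work; citing~\cite{QSD1} for it is circular.

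The fix---and the paper's actual route---is to derive the compact-set control directly from the \emph{hypothesis} of the Proposition. Since $\mathbb{P}_{(q,p)}(\tau_\partial>t)\propto H_{\alpha,\beta,\gamma,\sigma}(q,p)$ for every $t>0$, the ratio $\mathbb{P}_{(q,p)}(\tau_\partial>t/3)/\mathbb{P}_{(q,p)}(\tau_\partial>t)$ is bounded uniformly in $(q,p)$; combined with $\mathrm{p}^D_t\in\mathrm{L}^1(D\times D)$ this yields Lemma~\ref{lem:control in a compact}. The paper then bypasses the identification entirely: from $\mathbb{P}_{(q,p)}((q_t,p_t)\in K_t,\tau_\partial>t)\ge\beta_t\,\mathbb{P}_{(q,p)}(\tau_\partial>t)$ together with the eigenvector identity $P^D_t\phi=e^{-\lambda_0 t}\phi$ and the trivial bounds $m_t\,\mathbb{1}_{K_t}\le\phi\le\|\phi\|_\infty$, one obtains $\mathbb{P}_{(q,p)}(\tau_\partial>t)\propto\phi(q,p)$ directly, and similarly for $\psi$ via the adjoint process. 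The identification $\phi\propto H_{\alpha,\beta,\gamma,\sigma}$ is then a \emph{corollary} of the two-sided estimate (Corollary~\ref{coroll phi psi}), not an input to it.
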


As a result, in the general multidimensional Langevin case it is sufficient to obtain sharp estimates on the first exit time probability to obtain the two-sided estimates. This is namely the object of future research.

Last but not least, these two-sided estimates allow us in particular to sharpen a convergence result, stated in~\cite[Theorem 2.22]{QSD1}  regarding the convergence of the law of the process~\eqref{eq:Langevin} conditioned on not being killed. Unlike the results found in the literature in~\cite{QSD1,GuiNectoux,benaim}, the convergence result below is stated with a prefactor independent of the initial distribution $\theta$. 
\begin{theorem}[Long-time convergence]\label{thm:cv conditioned distrib}
There exists $\alpha>0$ such that for all $t_0>0$ there exists $C_{t_0}>0$ such that for all $t> t_0$, for all  $f\in\mathrm{L}_{H_{\alpha,\beta,\gamma,\sigma}}$, for any probability measure $\theta$ on $D$,
\begin{equation}\label{semigroup conditionnel ineq}
    \left\vert\mathbb{E}_\theta\left[f(q_t,p_t) | \tau_\partial > t\right] - \int_Df\mathrm{d}\mu\right\vert\leq C_{t_0}\Vert f\Vert_{H_{\alpha,\beta,\gamma,\sigma}} \mathrm{e}^{-\alpha t},
\end{equation}  
where $\mu$ is the quasi-stationary distribution defined in~\eqref{eq: def qsd mu}.
\end{theorem}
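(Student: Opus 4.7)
The plan is to exploit the pointwise long-time convergence~\eqref{eq:long-time cv constants} by integrating it against $\theta$ in the starting variable and against $f$ in the arrival variable, and to observe that the $\theta$-dependent prefactor of the leading term is \emph{identical} in the numerator and the denominator of the conditional expectation, so that it cancels upon division. This cancellation is the structural mechanism that produces uniformity in $\theta$.

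First, by~\eqref{eq:def semigroup} and Fubini,
$$\mathbb{E}_\theta\bigl[f(q_t,p_t)\mid\tau_\partial>t\bigr]=\frac{\int_D \theta(\mathrm{d}q\mathrm{d}p)\,P^D_tf(q,p)}{\int_D \theta(\mathrm{d}q\mathrm{d}p)\,P^D_t\mathbb{1}(q,p)}.$$
Integrating~\eqref{eq:long-time cv constants} first against $f(q',p')\,\mathrm{d}q'\mathrm{d}p'$ and then against $\theta(\mathrm{d}q\mathrm{d}p)$, and using Corollary~\ref{coroll phi psi} to bound $\psi\leq C\,H_{\alpha,\beta,-\gamma,\sigma}(\cdot,-\cdot)$ so that $\int_D|f|\psi\leq C\|f\|_{H_{\alpha,\beta,\gamma,\sigma}}$, together with $\int_D\psi=1$, yields, for $t\geq t_0$ and $A_\theta:=\int_D\phi\,\mathrm{d}\theta/\int_D\phi\psi>0$,
$$\Bigl|\mathrm{e}^{\lambda_0 t}\int_D \theta\,P^D_tf-A_\theta\int_D f\,\mathrm{d}\mu\Bigr|\leq C\,A_\theta\|f\|_{H_{\alpha,\beta,\gamma,\sigma}}\mathrm{e}^{-\alpha t},$$
and the analogous bound with $f=\mathbb{1}$, $\int_D f\,\mathrm{d}\mu=1$, and $\|f\|_{H_{\alpha,\beta,\gamma,\sigma}}$ replaced by the finite constant $\|\mathbb{1}\|_{H_{\alpha,\beta,\gamma,\sigma}}$ (finite by Remark~\ref{rmk: prop H}).

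Second, pick $t_1=t_1(t_0)\geq t_0$ so large that $C\mathrm{e}^{-\alpha t_1}\leq 1/2$; this guarantees $\int_D\theta\,P^D_t\mathbb{1}\geq\frac{1}{2}\mathrm{e}^{-\lambda_0 t}A_\theta$ for $t\geq t_1$. Writing
$$\mathbb{E}_\theta[f\mid\tau_\partial>t]=\frac{A_\theta\int_D f\,\mathrm{d}\mu+R_1}{A_\theta+R_2},\qquad |R_1|\leq CA_\theta\|f\|_{H_{\alpha,\beta,\gamma,\sigma}}\mathrm{e}^{-\alpha t},\ |R_2|\leq CA_\theta\mathrm{e}^{-\alpha t},$$
and applying the elementary identity $\frac{a+r_1}{b+r_2}-\frac{a}{b}=\frac{r_1}{b+r_2}-\frac{a\,r_2}{b(b+r_2)}$ together with $\bigl|\int_Df\,\mathrm{d}\mu\bigr|\leq C\|f\|_{H_{\alpha,\beta,\gamma,\sigma}}$ yields the bound $C\|f\|_{H_{\alpha,\beta,\gamma,\sigma}}\mathrm{e}^{-\alpha t}$. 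The factor $A_\theta$ cancels throughout, so the constant is genuinely independent of $\theta$.

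For the intermediate range $t\in[t_0,t_1]$, I use Corollary~\ref{corollary} applied to $|f|$ and to $\mathbb{1}$, which gives $\int_D\theta\,P^D_t|f|\leq c_{t_0}\|f\|_{H_{\alpha,\beta,\gamma,\sigma}}\int_D\theta H_{\alpha,\beta,\gamma,\sigma}$ and $\int_D\theta\,P^D_t\mathbb{1}\geq c'_{t_0}\|\mathbb{1}\|_{H_{\alpha,\beta,\gamma,\sigma}}\int_D\theta H_{\alpha,\beta,\gamma,\sigma}$, hence $\bigl|\mathbb{E}_\theta[f\mid\tau_\partial>t]\bigr|\leq C(t_0)\|f\|_{H_{\alpha,\beta,\gamma,\sigma}}$ uniformly in $\theta$; since $\mathrm{e}^{-\alpha t}\geq\mathrm{e}^{-\alpha t_1}$ on this bounded range, this crude bound absorbs into $C_{t_0}\|f\|_{H_{\alpha,\beta,\gamma,\sigma}}\mathrm{e}^{-\alpha t}$. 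The main obstacle throughout is the $\theta$-uniformity, which a crude total-variation bound cannot deliver: it is precisely the \emph{matched} structure of the right-hand side of~\eqref{eq:long-time cv constants}, where the error $\phi(q,p)\psi(q',p')\mathrm{e}^{-\alpha t}$ factors over $(q,p)$ in exactly the same way as the leading term, that makes the $\theta$-dependent factor $A_\theta$ cancel between numerator and denominator.
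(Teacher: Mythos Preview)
Your proof is correct and hinges on the same mechanism as the paper's: the error term carries the factor $\phi(q,p)$, which cancels against the lower bound on the denominator $\mathbb{P}_\theta(\tau_\partial>t)$ and yields the $\theta$-uniformity. The paper's execution differs in one technical detail: rather than deriving the denominator lower bound from~\eqref{eq:long-time cv constants} (which forces your case split $[t_0,t_1]$ versus $[t_1,\infty)$), it uses the eigenvector identity $\mathbb{E}_{(q,p)}[\phi(q_t,p_t)\mathbb{1}_{\tau_\partial>t}]=\mathrm{e}^{-\lambda_0 t}\phi(q,p)$ to get $\mathbb{P}_\theta(\tau_\partial>t)\ge\|\phi\|_\infty^{-1}\mathrm{e}^{-\lambda_0 t}\int_D\phi\,\mathrm{d}\theta$ for \emph{all} $t>0$, and packages the semigroup-level estimate as a separate Proposition~\ref{prop:long time semigroup}; this makes the case split unnecessary. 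One minor remark on your crude bound for $t\in[t_0,t_1]$: Corollary~\ref{corollary} gives constants $c_t,c'_t$ for each fixed $t$, not uniformly over an interval; your use of a single $c_{t_0},c'_{t_0}$ is justifiable (for instance via monotonicity $P^D_t\mathbb{1}\ge P^D_{t_1}\mathbb{1}$ on $t\le t_1$ for the lower bound, or directly from~\eqref{eq:long-time cv constants} for the upper bound) but should be made explicit.
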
  
\begin{remark}[Total-variation convergence]  
In particular, taking the supremum in~\eqref{semigroup conditionnel ineq} for $f \in \mathrm{L}^\infty(D)\subset\mathrm{L}_{H_{\alpha,\beta,\gamma,\sigma}}(D)$ such that $\Vert f\Vert_{\mathrm{L}^\infty(D)} \leq 1$, one has the existence of $\alpha,C>0$ such that for all $t\geq0$, for all probability measure $\theta$ on $D$,
\begin{equation}\label{cv qsd tv}
    \left\Vert\mathbb{P}_\theta\left((q_t,p_t) \in \cdot | \tau_\partial > t\right) - \mu(\cdot)\right\Vert_{TV}\leq C \mathrm{e}^{-\alpha t}.
\end{equation}
 Following a criterion established in~\cite[Proposition 3.8]{V}, the convergence in~\cite{QSD1} can be extended directly to the uniform convergence~\eqref{cv qsd tv} if there exists $t_0>0$ and a compact set $K_0\subset D$ such that the following estimate is satisfied, 
\begin{equation}\label{eq:return compact}
    \inf_{(q,p)\in D}\frac{\mathbb{P}_{(q,p)}((q_{t_0},p_{t_0})\in K_0,\tau_\partial>t_0)}{\mathbb{P}_{(q,p)}(\tau_\partial>t_0)}>0,
\end{equation}
which follows directly from the two-sided estimates in Theorem~\ref{thm:two-sided p_t^D}. The purpose of the estimates in Theorem~\ref{thm:cv conditioned distrib} is therefore mainly to extend this convergence for the largest set of functions $\mathrm{L}_{H_{\alpha,\beta,\gamma,\sigma}}(D)$.
\end{remark} 
This article is divided as follows: Section~\ref{sec:first exit time estimates} is devoted to the proof of sharp and explicit estimates on the first exit time probability of the Langevin process~\eqref{eq:Langevin}. Section~\ref{sec:two-sided density} focuses on the proof of the two-sided estimates and their long-time asymptotics. Namely, we prove Proposition~\ref{prop:first exit estimates implies two sided} in Section~\ref{sec:equivalence two sided} which yields the two-sided estimates from the results of Section~\ref{sec:first exit time estimates}. Corollary~\ref{coroll phi psi} is also proven in Section~\ref{sec:equivalence two sided}. Finally, we prove the long-time asymptotics of Theorem~\ref{thm:two-sided p_t^D} along with Theorem~\ref{thm:cv conditioned distrib} in Section~\ref{sec:long time asymptotics}.
 
\section{First exit time probability estimates}\label{sec:first exit time estimates}

In this section we shall prove sharp and explicit estimates for the first exit time probability of the Langevin process~\eqref{eq:Langevin}.
\begin{proposition}[First exit time probability estimates]\label{prop:estimates first exit time Langevin}
For all $t>0$,  
$$\mathbb{P}_{(q,p)}(\tau_\partial>t)\propto H_{\alpha,\beta,\gamma,\sigma}(q,p),$$
where $H_{\alpha,\beta,\gamma,\sigma}$ is defined in~\eqref{eq:def H_alpha,beta,gamma,sigma}.
\end{proposition}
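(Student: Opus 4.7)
The strategy is to first establish the result for the integrated Brownian motion (IBM)~\eqref{eq:int_brownian_intro} (the case $\alpha=\beta=\gamma=0$, $\sigma=1$, where $H_{0,0,0,1}$ reduces to $H(q,p) := h(q,p) \wedge h(1-q,-p)$) and then to extend it to the general Langevin process via a Girsanov transformation combined with a quadratic exponential tilt. For the IBM on the half-line $(0,\infty)$, I would combine the sharp asymptotic $\mathbb{P}_{(q,p)}(\widehat{\tau}_0 > t) \sim c\, h(q,p)/t^{1/4}$ of~\cite{isozaki1994,GP} with the Brownian scaling identity $\mathbb{P}_{(q,p)}(\widehat{\tau}_0 > t) = \mathbb{P}_{(q/t^{3/2},\, p/\sqrt{t})}(\widehat{\tau}_0 > 1)$, which reduces the claim to a uniform estimate $\mathbb{P}_{(q,p)}(\widehat{\tau}_0 > 1) \propto h(q,p)$ on $\mathbb{R}_+^* \times \mathbb{R}$. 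The bound is checked on compact sets via the asymptotic; on the tails ($q \to 0$ with large $|p|$) it follows from the explicit behaviour of $g$ in Remark~\ref{asymptotics g}, in particular the super-exponential decay $g(z) \sim e^{-2|z|^3/9}|z|^{-5/2}$ as $z \to -\infty$.

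For the IBM killed outside the bounded interval $(0,1)$, the upper bound is immediate from $\mathbb{P}_{(q,p)}(\widehat{\tau}_\partial > t) \leq \mathbb{P}_{(q,p)}(\widehat{\tau}_0 > t) \wedge \mathbb{P}_{(q,p)}(\widehat{\tau}_1 > t)$ together with the symmetry $\mathbb{P}_{(q,p)}(\widehat{\tau}_1 > t) = \mathbb{P}_{(1-q,-p)}(\widehat{\tau}_0 > t)$. For the lower bound I would use the strong Markov property at time $t/2$: fixing a compact set $K$ strictly inside $D$, estimate from below $\mathbb{P}_{(q,p)}(\widehat{\tau}_\partial > t/2,\, (\widehat{q}_{t/2}, \widehat{p}_{t/2}) \in K) \geq c_t H(q,p)$ by distinguishing which of $h(q,p)$ or $h(1-q,-p)$ realizes the minimum in $H$ and using the one-sided bound of the previous paragraph together with the fact that, in the relevant regime, the probability of the trajectory reaching the opposite boundary before time $t/2$ is negligible. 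The survival probability from any point of $K$ during $[t/2,t]$ is then bounded below by a uniform constant.

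To treat the general Langevin process~\eqref{eq:Langevin}, I would apply Girsanov's theorem to remove the drift $-(\alpha q + \beta + \gamma p)$, so that $(q,p)$ becomes an IBM (scaled by $\sigma$) under a new measure $\widetilde{\mathbb{P}}$. Rewriting the stochastic integral via $\sigma\, dB_s = dp_s + (\alpha q_s + \beta + \gamma p_s)\,ds$ and integrating by parts using $\int p\,ds = q_t - q_0$, $\int qp\,ds = \tfrac12(q_t^2 - q_0^2)$ and $\int p\,dp = \tfrac12(p_t^2 - p_0^2 - \sigma^2 t)$, the Radon-Nikodym density decomposes as a quadratic boundary factor $\exp(\Psi(q_0,p_0) - \Psi(q_t,p_t))$ times an exponential of the form $\exp\big({-}\tfrac{1}{2\sigma^2}\int_0^t((2\alpha+\gamma^2)p_s^2 + \alpha^2 q_s^2)\,ds\big)$, up to $t$-dependent constants. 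Since $q_s \in (0,1)$ under $\tau_\partial > t$, the $q^2$ integral is bounded, but the $p^2$ integral must be absorbed by a further quadratic exponential tilt $\exp(W(q_t,p_t) - W(q_0,p_0))$ where $W(q,p) = Aq^2 + Bqp + Cp^2 + Dq + Ep$ is chosen so that $\mathcal{L}_0 W + \tfrac{\sigma^2}{2}(\partial_p W)^2$ (with $\mathcal{L}_0 = p\partial_q + \tfrac{\sigma^2}{2}\partial_p^2$ the IBM generator) cancels the remaining $p^2$ density. The resulting algebraic constraints admit a unique solution producing the specific coefficients $8\alpha/11$ and $3\gamma^2/22$ of $T_{\alpha,\beta,\gamma,\sigma}$ as well as the critical scale $\lambda = \sqrt{(\alpha+\gamma^2/2)/11}$; the shifted effective velocity $(p+3\lambda q)/\sigma^{2/3}$ in the definition of $G_{\lambda,\sigma}$ then arises from completing the square inside the tilted IBM. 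Applying the IBM estimates of the previous paragraph to the tilted process yields $\mathbb{P}_{(q,p)}(\tau_\partial > t) \propto T_{\alpha,\beta,\gamma,\sigma}(q,p)\, G_{\lambda,\sigma}(q,p) = H_{\alpha,\beta,\gamma,\sigma}(q,p)$.

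The main technical obstacle is the lower bound in the IBM step for initial data with large incoming velocity near the boundary, where the process must first re-enter the interior before settling into a regime controlled by $H$; the matching upper and lower bounds on this initial excursion require the super-exponential tail estimates of $g$ in Remark~\ref{asymptotics g}. A second obstacle is verifying that the combined Girsanov/Doob change of measure yields matching upper \emph{and} lower bounds with the same $H_{\alpha,\beta,\gamma,\sigma}$ dependence; in particular, the asymmetric factor $e^{-3\lambda p/\sigma^2}$ appearing only on the $q=1$ side of $G_{\lambda,\sigma}$ reflects the non-reversibility of the tilted dynamics and must be tracked through both the endpoint boundary term and the reweighted IBM estimate.
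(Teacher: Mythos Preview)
Your overall architecture---IBM first, then Girsanov to reach the general Langevin process---matches the paper exactly, and your identification of the boundary-term/integration-by-parts computation is correct. However, two of the load-bearing steps are underspecified in ways that matter.

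\medskip
\textbf{IBM lower bound.} The paper does not use a fixed-time Markov decomposition at $t/2$ with a compact set $K$. Instead it exploits that $h(\widehat q_{t\wedge\widehat\tau_\partial},\widehat p_{t\wedge\widehat\tau_\partial})$ is a genuine martingale (Lemma~\ref{lemma martingale}) and applies optional stopping at the random time $\widehat\pi_t=\inf\{s:|\widehat p_s|/\widehat q_s^{1/3}\le 3/t\}$, which is a.s.\ bounded by $t/2$. This reduces the lower bound to the ``compact'' region $|p|/q^{1/3}\le 3/t$, where the key step (your ``negligibility of reaching the opposite boundary'') is made precise in Lemma~\ref{lemma prob cond}. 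That lemma is \emph{not} a consequence of the asymptotics of $g$; it relies on Lachal's explicit formulas for $\mathbb P_{(q,0)}(\widehat\tau_0>\widehat\tau_1)$ and for the density of $\widehat q_{\widehat\pi_0}$. Moreover, the argument only yields the lower bound for small times $t\in(0,t_0]$; the extension to all $t>0$ is a separate step (Proposition~\ref{lower-bound 2}) via the compact-return estimate of Lemma~\ref{lem:control in a compact}. Your compact-set-at-$t/2$ scheme would face a circularity: bounding $\mathbb P_{(q,p)}(\widehat\tau_\partial>t/2,\,(\widehat q_{t/2},\widehat p_{t/2})\in K)\ge c_tH(q,p)$ from an arbitrary $(q,p)$ is essentially the statement to be proved.

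\medskip
\textbf{The tilted process is not IBM.} Your Doob/quadratic tilt $W$ is morally the same device as the paper's auxiliary process~\eqref{eq:integ orstein uhlenbeck}: choosing $W$ to cancel $\int p_s^2\,ds$ produces a process with linear drift $-4\eta p-3\eta^2 q$ (with $\eta=\sqrt{(\alpha+\gamma^2/2)/11}$). But you then write ``applying the IBM estimates of the previous paragraph to the tilted process'', which is a gap: the tilted process has drift and one must establish $\mathbb P_{(q,p)}(\tau^\eta_\partial>t)\propto G_{\eta,\sigma}(q,p)$ separately. The paper does this in Lemma~\ref{lem:first exit time integ orstein uhlenbeck} via the explicit identity
\[
q^\eta_t \overset{\mathcal L}{=} e^{-3\eta t}\Bigl(q+(p+3\eta q)\tfrac{e^{2\eta t}-1}{2\eta}+\sigma\int_0^{(e^{2\eta t}-1)/2\eta}B_s\,ds\Bigr),
\]
which expresses $q^\eta$ as a time-changed IBM confined to the moving domain $(0,(1+2\eta s)^{3/2})$; the shifted velocity $p+3\eta q$ and the asymmetric factor $e^{-3\eta p/\sigma^2}$ in $G_{\eta,\sigma}$ emerge from this representation, not from ``completing the square''. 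The paper also needs, at this stage, the two-sided density bounds for IBM (via Proposition~\ref{prop:first exit estimates implies two sided}) to control expectations of unbounded functionals of $\widehat p_t^\sigma$ appearing in Lemma~\ref{lem:girsanov exponentielle estimate}; your sketch does not account for this bootstrap.
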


The proof is first completed in Section~\ref{sec:two-sided proba integrated brownian process} for the integrated Brownian process~\eqref{eq:int_brownian_intro} and extended in Section~\ref{sec:first exit time proba langevin} to the Langevin process~\eqref{eq:Langevin} using a Girsanov argument. 

\subsection{Integrated Brownian process}\label{sec:two-sided proba integrated brownian process}

Let us consider the following process for $\sigma>0$:
\begin{equation}\label{eq:int brownian motion with sigma}
  \left\{
    \begin{aligned}
        &\mathrm{d}\widehat{q}^\sigma_t=\widehat{p}^\sigma_t \mathrm{d}t , \\
        &\mathrm{d}\widehat{p}^\sigma_t=\sigma\mathrm{d}B_t.
    \end{aligned}
\right.  
\end{equation}
We denote by $\widehat{\tau}_\partial^\sigma$ its first exit time from $D$. The main result of this section is the following.
\begin{proposition}[First exit time probability estimates]\label{prop:two-sided proba final}
For all $t>0$,  
$$\mathbb{P}_{(q,p)}(\widehat{\tau}^\sigma_\partial>t)\propto H(q,p/\sigma^{2/3}),$$
where $H$ is defined in~\eqref{def H}.
\end{proposition}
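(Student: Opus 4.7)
The plan is to reduce the problem to the standard case $\sigma=1$ via a Brownian scaling, and then establish matching upper and lower bounds using the half-line estimates recalled in the introduction together with a Markov property argument.

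First, I would exploit the scaling invariance of Brownian motion. Setting $\tilde X_s := \widehat q^\sigma_{\sigma^{-2/3}s}$ and $\tilde V_s := \sigma^{-2/3}\widehat p^\sigma_{\sigma^{-2/3}s}$, a direct computation shows that $(\tilde X,\tilde V)$ is an integrated Brownian process with unit diffusion coefficient started from $(q,p/\sigma^{2/3})$. The identity
\[\mathbb{P}_{(q,p)}(\widehat\tau^\sigma_\partial>t)=\mathbb{P}_{(q,p/\sigma^{2/3})}(\widehat\tau^1_\partial>\sigma^{2/3}t)\]
then reduces the claim to proving $\mathbb{P}_{(q,p)}(\widehat\tau^1_\partial>t)\propto H(q,p)$ for every $t>0$.

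For the upper bound, I would invoke the pathwise monotonicity $\widehat\tau^1_\partial\le\widehat\tau_0\wedge\widehat\tau_1$. Starting from the Isozaki--Pitman asymptotic~\eqref{long time asymptotics} together with the explicit first-passage densities of $(\widehat\tau_0,B_{\widehat\tau_0})$ available in the literature cited in the introduction, one refines these into a uniform bound $\mathbb{P}_{(q,p)}(\widehat\tau_0>t)\le C_t\,h(q,p)$ for every $t>0$ and $(q,p)\in(0,\infty)\times\mathbb{R}$. The reflection $(q,p)\mapsto(1-q,-p)$ yields the symmetric estimate for $\widehat\tau_1$, and taking the minimum gives the desired upper bound in terms of $H$.

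For the lower bound, I would apply the Markov property at time $t/2$. By positivity and continuity of the killed transition density on $D\times D$, on any compact $K\subset D$ the quantity $\inf_{(q',p')\in K}\mathbb{P}_{(q',p')}(\widehat\tau^1_\partial>t/2)$ is strictly positive, so it suffices to establish the sharp inflow estimate
\[\mathbb{P}_{(q,p)}\bigl(\widehat\tau^1_\partial>t/2,\,(\widehat q_{t/2},\widehat p_{t/2})\in K\bigr)\ge c_t\,H(q,p).\]
For $(q,p)$ in a compact subset of $D$ this is immediate by positivity; in the near-boundary or large-velocity regimes, I would build the required event by an explicit pathwise construction (for instance a Girsanov tilt, or a direct manipulation of the Gaussian density of $(\widehat q_{t/2},\widehat p_{t/2})$) that drives the process into $K$ without exiting, with probability matching the decay rate of $h(q,p)\wedge h(1-q,-p)$. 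The main obstacle lies precisely here: because no ellipticity is available, the lower bound must reproduce the intricate boundary behaviour captured by $H$, and the two competing half-line regimes must be handled separately --- depending on the sign of $p$ and the distance to each endpoint --- before combining them via the minimum.
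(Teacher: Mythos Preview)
Your scaling reduction to $\sigma=1$ matches the paper exactly. Beyond that, however, both your upper and lower bound sketches omit the tool that actually carries the paper's proof: the fact that $(h(\widehat q_{t\wedge\widehat\tau_\partial},\widehat p_{t\wedge\widehat\tau_\partial}))_{t\ge0}$ is a martingale (Lemma~\ref{lemma martingale}, resting on $\mathcal L h=0$). For the upper bound you assert that the Isozaki--Pitman asymptotic ``refines'' into a uniform estimate $\mathbb P_{(q,p)}(\widehat\tau_0>t)\le C_t\,h(q,p)$ on $(0,\infty)\times\mathbb R$, but this is not immediate: by scaling it amounts to controlling $\mathbb P_{(1,v)}(\widehat\tau_0>s)$ by a multiple of $g(v)s^{-1/4}$ uniformly in $v\in\mathbb R$ and $s>0$, and the pointwise asymptotic~\eqref{long time asymptotics} gives no such uniformity (in particular as $v\to-\infty$ the super-exponential decay $e^{-2|v|^3/9}$ must be produced). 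The paper sidesteps this by introducing the stopping time $\widehat\pi_t=\inf\{s:|\widehat p_s|/\widehat q_s^{1/3}\le 3/t\}$, showing $\widehat\pi_t\wedge\widehat\tau_\partial\le t/2$ deterministically by an ODE argument, applying strong Markov to reduce to the moderate-ratio region where the asymptotic does apply uniformly, and then closing with optional stopping on the $h$-martingale.

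For the lower bound your proposal is essentially a placeholder: ``Girsanov tilt or direct Gaussian manipulation'' does not explain how the precise decay $h(q,p)\wedge h(1-q,-p)$, with its $q^{1/6}$ behaviour near $\Gamma^0$ and its $e^{-2|p|^3/9}$ tail, would emerge. The paper's route is quite different from the one you outline and again leans on the $h$-martingale. It first proves the bound only for small $t\le t_0$, decomposing $D$ into three regions according to the size of $p/q^{1/3}$ and $-p/(1-q)^{1/3}$; in the central region it compares $\widehat\tau_\partial$ with $\widehat\tau_0\wedge\widehat\tau_1$ using Lachal's explicit hypergeometric formula for $\mathbb P_{(q,0)}(\widehat\tau_0>\widehat\tau_1)\le q^{1/6}$, and in the outer regions it runs the same $\widehat\pi_t$/optional-stopping trick in reverse. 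Only afterwards is the result extended to all $t>0$ by a compact-return iteration (Lemma~\ref{lem:control in a compact}), which is close in spirit to your Markov-at-$t/2$ idea but is the easy final step, not the substance of the argument.
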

In addition, recall that for $c>0$, the integrated Brownian process satisfies the following equality-in-law:
\begin{equation}\label{eq:time scale change int brownian}
    \left(\int_0^{ct}B_s\mathrm{d}s\right)_{t\geq0}\overset{\mathcal{L}}{=}\left(c^{3/2}\int_0^{t}B_s\mathrm{d}s\right)_{t\geq0}.
\end{equation}
Therefore, for any $t>0$, $(q,p)\in D$,
\begin{equation}\label{eq:transformation sans sigma}
    \mathbb{P}_{(q,p)}(\widehat{\tau}^\sigma_\partial>t)=\mathbb{P}_{(q,p/\sigma^{2/3})}(\widehat{\tau}_\partial>\sigma^{2/3}t),
\end{equation}
where $\widehat{\tau}_\partial$ corresponds to the first exit time from $D$ of~\eqref{eq:int brownian motion with sigma} for $\sigma=1$. As a result, it is sufficient to complete the proof in the case $\sigma =1$. We shall from now on denote simply by $(\widehat{q}_t,\widehat{p}_t)_{t\geq0}$ the process~\eqref{eq:int brownian motion with sigma} defined with $\sigma=1$ and by $\widehat{\tau}_\partial$ its first exit time from $D$.

Therefore we shall prove the following proposition which directly implies Proposition~\ref{prop:two-sided proba final}.

\begin{proposition}[First exit time probability estimates]\label{prop:two-sided proba}
For all $t>0$,  
$$\mathbb{P}_{(q,p)}(\widehat{\tau}_\partial>t)\propto H(q,p).$$
\end{proposition}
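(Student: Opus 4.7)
My plan is to prove the matching upper and lower bounds separately, exploiting two structural features: the identity $\widehat\tau_\partial=\widehat\tau_0\wedge\widehat\tau_1$ where $\widehat\tau_k:=\inf\{t>0:\widehat q_t=k\}$, and the symmetry $(q,p)\leftrightarrow(1-q,-p)$ of the integrated Brownian motion, which exchanges $\widehat\tau_0$ with $\widehat\tau_1$ and also the two terms defining $H$. Both bounds therefore reduce to a half-line problem at $\{q=0\}$, after taking minima at the end.

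For the upper bound, the inequality $\widehat\tau_\partial\le\widehat\tau_0\wedge\widehat\tau_1$ combined with the symmetry gives
$$\mathbb{P}_{(q,p)}(\widehat\tau_\partial>t)\le\mathbb{P}_{(q,p)}(\widehat\tau_0>t)\wedge\mathbb{P}_{(1-q,-p)}(\widehat\tau_0>t),$$
reducing the task to the pointwise half-line bound $\mathbb{P}_{(q,p)}(\widehat\tau_0>t)\le C\,h(q,p)/t^{1/4}$. Brownian scaling yields $\mathbb{P}_{(q,p)}(\widehat\tau_0>t)=\mathbb{P}_{(q/t^{3/2},p/t^{1/2})}(\widehat\tau_0>1)$, and a short calculation shows $h(q,p)=t^{1/4}h(q/t^{3/2},p/t^{1/2})$, so the task further reduces to the fixed-time estimate $\mathbb{P}_{(q,p)}(\widehat\tau_0>1)\le C\,h(q,p)$ uniformly in $(q,p)$. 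I would obtain this from the explicit transition density of integrated BM killed at $0$ derived in \cite{GP,isozaki1994}: that density factorizes through the harmonic profile $h$, and integration in the second variable yields the bound. Alternatively, one can exploit the fact that $h$ is $\mathcal{L}_0$-harmonic on $\{q>0\}$ (with $\mathcal{L}_0=p\partial_q+\tfrac12\partial_p^2$), so $h(\widehat q_{t\wedge\widehat\tau_0},\widehat p_{t\wedge\widehat\tau_0})$ is a nonnegative martingale, and combine this with the long-time asymptotics~\eqref{long time asymptotics}.

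The lower bound is the main obstacle. I would use a two-step Markov decomposition through an interior compact set $K\Subset D$. For a fixed small $s_0>0$ and $t>s_0$,
$$\mathbb{P}_{(q,p)}(\widehat\tau_\partial>t)\ge\mathbb{P}_{(q,p)}\bigl((\widehat q_{s_0},\widehat p_{s_0})\in K,\,\widehat\tau_\partial>s_0\bigr)\cdot\inf_{(q',p')\in K}\mathbb{P}_{(q',p')}(\widehat\tau_\partial>t-s_0),$$
where the infimum is a strictly positive constant depending only on $(t-s_0,K)$ by hypoellipticity and compactness. The task then reduces to the uniform estimate
$$\mathbb{P}_{(q,p)}\bigl((\widehat q_{s_0},\widehat p_{s_0})\in K,\,\widehat\tau_\partial>s_0\bigr)\ge c\,H(q,p),$$
which is the technical heart of the proof.

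This last estimate must match the asymmetric behavior of $H$ across all parts of $\partial D$. By the reflection symmetry one may reduce to the regime $h(q,p)\le h(1-q,-p)$, where $H(q,p)=h(q,p)$; the relevant asymptotics of $h$ in this regime are $h(q,p)\asymp q^{1/6}$ near the singular point $(0,0)$ (capturing the optimal $1/6$-Hölder exponent at the singular set), $h(q,p)\asymp\sqrt{p}$ near $\{q=0,\,p>0\}$, and the sub-Gaussian tail $h(q,p)\asymp q|p|^{-5/2}e^{-2|p|^3/(9q)}$ as $p\to-\infty$. My approach is to construct, in each regime, an explicit event on the Brownian driver over $[0,s_0]$ whose probability is of order $h(q,p)$ and whose realization simultaneously forces $\widehat q_{s_0}$ into $K$ and prevents the path from hitting either endpoint of $(0,1)$. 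The most delicate case is the sub-Gaussian regime, where the $e^{-2|p|^3/(9q)}$ tail of $h$ must be matched by a precise large-deviation estimate on a Brownian increment of order $|p|/q^{1/3}$; the singular regime requires a ``lifting'' construction exploiting the integrated-BM scaling $\widehat q_t\sim t^{3/2}\cdot(\text{Gaussian})$. In both cases the explicit killed density of \cite{GP,isozaki1994} is expected to play an essential role, and the additional constraint $\widehat\tau_1>s_0$ is absorbed into the construction by choosing $s_0$ small enough that the probability of reaching $\{q=1\}$ is of smaller order than $h(q,p)$ in the reduced regime.
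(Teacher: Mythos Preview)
Your upper-bound plan is essentially the paper's argument: reduce by $\widehat\tau_\partial\le\widehat\tau_0$, use the symmetry $(q,p)\leftrightarrow(1-q,-p)$, and control the half-line probability via the martingale property of $h(\widehat q_{t\wedge\widehat\tau_0},\widehat p_{t\wedge\widehat\tau_0})$ together with the long-time asymptotics~\eqref{long time asymptotics}. The paper executes this with a stopping-time argument (Proposition~\ref{prop:upper-bound}) rather than your scaling to $t=1$, but the content is the same.

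Your lower-bound plan is genuinely different, and the gap lies in its ``technical heart''. You reduce to showing
\[
\mathbb{P}_{(q,p)}\bigl((\widehat q_{s_0},\widehat p_{s_0})\in K,\ \widehat\tau_\partial>s_0\bigr)\ge c\,H(q,p)
\]
by explicit event construction. In the sub-Gaussian regime this forces you to produce a Brownian event of probability at least $c\,q^{1/6}|p/q^{1/3}|^{-5/2}\exp(-2|p|^3/9q)$, with the \emph{exact} exponential constant $2/9$: any construction yielding $\exp(-c''|p|^3/q)$ with $c''>2/9$ is too small, and with $c''<2/9$ would be false. You do not indicate how to hit this constant, and standard large-deviation heuristics on the driver do not immediately give it. Similarly, absorbing the constraint $\widehat\tau_1>s_0$ by ``choosing $s_0$ small'' is delicate: after surviving the dip toward $0$ the velocity can be large and positive, so the path can reach $1$ quickly regardless of $s_0$.

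The paper sidesteps both issues by exploiting the martingale property of $h$ a second time (Lemma~\ref{lemma martingale}). It partitions $D$ by the curves $p=-3q^{1/3}/t$, $p=3(1-q)^{1/3}/t$; in the middle zone it compares $\widehat\tau_\partial$ to $\widehat\tau_0\wedge\widehat\tau_1$ using two explicit inputs from Lachal --- the law of $\widehat q_{\widehat\pi_0}$ and the hypergeometric identity $\mathbb{P}_{(q,0)}(\widehat\tau_0>\widehat\tau_1)\le q^{1/6}$ --- to obtain Lemmas~\ref{lower bound tau_0 tau_1} and~\ref{lemma prob cond}. In the outer zones it applies optional stopping at $\widehat\pi_t=\inf\{s:\widehat p_s/\widehat q_s^{1/3}\ge-3/t\}$ to transport the martingale identity $\mathbb{E}[h(\widehat q_{\widehat\pi_t\wedge\widehat\tau_0},\widehat p_{\widehat\pi_t\wedge\widehat\tau_0})]=h(q,p)$ back to the starting point, which is exactly what delivers the sub-Gaussian tail with the correct constant without any explicit event construction. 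Only \emph{after} the small-$t$ lower bound is established does the paper invoke a compact-set argument (Lemma~\ref{lem:control in a compact}) to extend to all $t$; your plan inverts this order and thereby front-loads the hardest estimate.
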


 The proof is divided as follows: Section~\ref{upper-bound subsection} is devoted to the proof of the upper-bound and Section~\ref{lower-bound subsection} focuses on the proof of the lower-bound. In order to achieve this proof we will use mostly the two following properties satisfied by the integrated Brownian process along with the long-time asymptotics~\eqref{long time asymptotics} from~\cite{isozaki1994,GP}.  

Let us define $\widehat{\tau}_0,\widehat{\tau}_1$ as the following hitting times $$\widehat{\tau}_0:=\inf\{t > 0: \widehat{q}_t=0\},\qquad\widehat{\tau}_1:=\inf\{t > 0: \widehat{q}_t=1\}.$$
We shall make use of the following properties.
\begin{remark}[Timescale change]\label{rmk scale change}

Given~\eqref{eq:time scale change int brownian}, it follows that for all $q>0$, $p\in \mathbb{R}$ and $\lambda>0$,
$$\mathbb{P}_{(q,p)}(\widehat{\tau}_0>t)=\mathbb{P}_{(\lambda^3q,\lambda p)}(\widehat{\tau}_0>\lambda^2t).$$
\end{remark}
\begin{remark}[Invariant transformation]\label{invariant proba}
Let us notice that the equality-in-law $\left(B_t\right)_{t\geq0}\overset{\mathcal{L}}{=}\left(-B_t\right)_{t\geq0}$ ensures that  
$$\forall (q,p)\in D,\qquad\mathbb{P}_{(q,p)}\left(\widehat{\tau}_\partial>t\right)=\mathbb{P}_{(1-q,-p)}\left(\widehat{\tau}_\partial>t\right),\text{ and}\quad\mathbb{P}_{(1-q,-p)}\left(\widehat{\tau}_0>t\right)=\mathbb{P}_{(q,p)}\left(\widehat{\tau}_1>t\right).$$
\end{remark} 
\subsubsection{Upper-bound on the first exit time probability}\label{upper-bound subsection}
The goal of this subsection is to provide an upper-bound on the probability $\mathbb{P}_{(q,p)}(\widehat{\tau}_\partial>t)$. Namely, we shall prove the following proposition.
 
\begin{proposition}[Upper-bound]\label{prop:upper-bound}
For all $t>0$ there exists $c_t>0$ such that
\begin{equation}\label{ineq:prop-upper-bound}
    \forall (q,p)\in D,\qquad \mathbb{P}_{(q,p)}(\widehat{\tau}_\partial>t)\leq c_t\,H(q,p).
\end{equation}    
\end{proposition}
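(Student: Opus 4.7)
My plan is a chain of reductions based on the two symmetries in Remarks~\ref{rmk scale change} and~\ref{invariant proba} combined with the long-time asymptotic~\eqref{long time asymptotics}. First, since $\widehat{\tau}_\partial = \widehat{\tau}_0 \wedge \widehat{\tau}_1$, one obtains $\mathbb{P}_{(q,p)}(\widehat{\tau}_\partial > t) \leq \mathbb{P}_{(q,p)}(\widehat{\tau}_0 > t) \wedge \mathbb{P}_{(q,p)}(\widehat{\tau}_1 > t)$; Remark~\ref{invariant proba} then rewrites the second term as $\mathbb{P}_{(1-q,-p)}(\widehat{\tau}_0 > t)$. In view of $H(q,p) = h(q,p)\wedge h(1-q,-p)$, the proposition reduces to the one-sided half-line estimate
\[
\mathbb{P}_{(q,p)}(\widehat{\tau}_0 > t) \leq c_t\, h(q,p),\qquad (q,p)\in\mathbb{R}_+^*\times\mathbb{R}.
\]

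Next, I would apply Remark~\ref{rmk scale change} with $\lambda = t^{-1/2}$ to write $\mathbb{P}_{(q,p)}(\widehat{\tau}_0 > t) = \mathbb{P}_{(t^{-3/2}q,\,t^{-1/2}p)}(\widehat{\tau}_0 > 1)$; a direct substitution in $h(q,p) = q^{1/6}g(p/q^{1/3})$ yields the matching identity $h(t^{-3/2}q, t^{-1/2}p) = t^{-1/4}h(q,p)$. This absorbs the $t$-dependence into $c_t = C t^{-1/4}$, leaving only the $t=1$ estimate $\mathbb{P}_{(q,p)}(\widehat{\tau}_0 > 1) \leq C\,h(q,p)$. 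Applying Remark~\ref{rmk scale change} one more time with $\lambda = q^{-1/3}$ and passing to the coordinates $(s,z) = (q^{-2/3}, p/q^{1/3})$ converts this into
\[
\sup_{(s,z)\in(0,\infty)\times\mathbb{R}} \frac{s^{1/4}\,\mathbb{P}_{(1,z)}(\widehat{\tau}_0 > s)}{g(z)} < \infty.
\]
The ratio is continuous, so boundedness on compact subsets of $(s,z)$-space is immediate; for each fixed $z$ the $s\to\infty$ limit is the explicit constant in~\eqref{long time asymptotics}; at $s\to 0$ the ratio stays bounded since $\mathbb{P}\leq 1$ and $g(z)>0$; and as $z\to+\infty$, $g(z)\sim\sqrt{z}$ drives the ratio to zero.

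The hard part will be the uniformity as $z\to -\infty$: by Remark~\ref{asymptotics g}, $g(z)$ decays like $(2/9)^{-5/6}|z|^{-5/2}e^{-2|z|^3/9}/6$, and one must produce a matching upper bound on $\mathbb{P}_{(1,z)}(\widehat{\tau}_0 > s)$ with the sharp exponential rate and the correct polynomial prefactor $|z|^{-5/2}$. The rate $2|z|^3/9$ is precisely the minimum of the constrained action $\tfrac12\int_0^1\ddot q(t)^2\,dt$ under $q(0)=1$, $\dot q(0)=z$, $q(t)\geq 0$, attained at a cubic tangent to $q=0$ at time $t^*=3/|z|$. My plan to obtain the correct prefactor is either a Laplace expansion around this minimising path --- quantifying the Gaussian fluctuations transverse to it --- or, more directly, to integrate the explicit joint law of $(\widehat{\tau}_0,\widehat{p}_{\widehat{\tau}_0})$ established in~\cite{lachal1} over $\{\widehat{\tau}_0 > s\}$. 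Once this uniform bound is in hand, unwinding the three reductions produces the proposition with $c_t = C t^{-1/4}$.
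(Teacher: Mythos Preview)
Your reductions are correct, and you correctly identify the regime $z\to-\infty$ as the crux. But this regime is a genuine gap: neither proposed method is carried out, and each is substantially harder than you suggest. A Laplace expansion would have to produce not just the sharp exponential rate $2|z|^3/9$ but the exact polynomial prefactor $|z|^{-5/2}$, \emph{uniformly in $s$}; the minimising path you describe touches the boundary at $t^*=3/|z|$, which tends to~$0$, so the fluctuation analysis is a small-time boundary problem rather than a standard interior Laplace estimate. Integrating Lachal's density over $\{\widehat\tau_0>s\}$ is in principle feasible but again demands sharp asymptotics uniform in both $s$ and $z$. There is also a secondary soft spot: in the ``easy'' regimes your arguments are all pointwise (for fixed $z$, the ratio converges as $s\to\infty$; for fixed $z$, the ratio vanishes as $s\to0$), and you do not explain why the supremum over the full unbounded set $(0,\infty)\times\mathbb R$ is finite --- pointwise finiteness of a continuous function does not yield a finite supremum.

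The paper bypasses the hard tail estimate entirely by exploiting a structural fact you do not use: $h$ is harmonic for $\mathcal L=p\partial_q+\tfrac12\partial_p^2$ (this is \cite[Lemma~2.1]{GP}), so $\bigl(h(\widehat q_{t\wedge\widehat\tau_\partial},\widehat p_{t\wedge\widehat\tau_\partial})\bigr)_{t\ge0}$ is a true martingale (Lemma~\ref{lemma martingale}). The proof then splits into two zones. When $|p|/q^{1/3}\le 3/t$, a scaling plus monotonicity argument together with~\eqref{long time asymptotics} gives $\mathbb P_{(q,p)}(\widehat\tau_\partial>t)\le\beta_t h(q,p)$ directly. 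When $|p|/q^{1/3}>3/t$ --- which contains your difficult $z\to-\infty$ regime --- one introduces the stopping time $\widehat\pi_t=\inf\{s\ge0:|\widehat p_s|/\widehat q_s^{1/3}\le 3/t\}$, observes (by integrating $\dot{\widehat q}_s/\widehat q_s^{1/3}$) that $\widehat\tau_\partial\wedge\widehat\pi_t\le t/2$ almost surely, applies the strong Markov property at $\widehat\pi_t$ to land in the first zone, and then uses Doob's optional sampling on the bounded stopping time $\widehat\pi_t\wedge\widehat\tau_\partial$ to recover $\mathbb E_{(q,p)}[h(\widehat q_{\widehat\pi_t\wedge\widehat\tau_\partial},\widehat p_{\widehat\pi_t\wedge\widehat\tau_\partial})]=h(q,p)$. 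The martingale identity thus delivers exactly the sharp bound $h(q,p)$ --- including the correct exponential and polynomial factors as $z\to-\infty$ --- with no asymptotic analysis at all. The bound $h(1-q,-p)$ then follows by the symmetry in Remark~\ref{invariant proba}, as you noted.
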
  
We resort in particular to the following lemma.
\begin{lemma}[Martingales]\label{lemma martingale}
For all $(q,p)\in D$, the processes $(h(\widehat{q}_{t\land\widehat{\tau}_\partial},\widehat{p}_{t\land\widehat{\tau}_\partial}))_{t\geq0}$ and $(h(1-\widehat{q}_{t\land\widehat{\tau}_\partial},-\widehat{p}_{t\land\widehat{\tau}_\partial}))_{t\geq0}$ are $(\mathcal{F}_t)_{t\geq0}$-martingales under $\mathbb{P}_{(q,p)}$, where $(\mathcal{F}_t)_{t\geq0}$ is the natural filtration of the Brownian process.
\end{lemma}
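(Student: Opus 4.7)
The strategy is to recognize $h$ as a harmonic function for the infinitesimal generator $\mathcal{L}_0 := p \partial_q + \frac{1}{2} \partial_p^2$ of the process $(\widehat{q}_t, \widehat{p}_t)$ on $D$, and then to conclude by Itô's formula together with an appropriate localization at the boundary. The core computation is that $g$ satisfies the second-order ODE
$$g''(z) - \frac{2z^2}{3}\, g'(z) + \frac{z}{3}\, g(z) = 0, \qquad z \in \mathbb{R} \setminus \{0\}.$$
This should follow from Kummer's differential equation $w u'' + (b-w) u' - a u = 0$, satisfied by both confluent hypergeometric functions $U(a,b,\cdot)$ and $V(a,b,\cdot)$ with $a = 1/6$ and $b = 4/3$, after the change of variable $w = 2 z^3/9$ and careful application of the chain rule to account for the prefactors in the definition of $g$. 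A direct computation with $z = p/q^{1/3}$ would then yield
$$\mathcal{L}_0 h(q,p) = q^{-1/2} \left[ \frac{1}{2} g''(z) - \frac{z^2}{3} g'(z) + \frac{z}{6} g(z) \right] = 0, \qquad (q,p) \in D,$$
after substituting the ODE for $g$. Continuity of $g$ and its derivatives across $z=0$ — readable off the series expansions of $U$ and $V$ — would justify that $h \in \mathcal{C}^\infty(D)$, not only away from the line $\{p=0\}$.

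With $\mathcal{L}_0 h \equiv 0$ on $D$ at hand, the martingale property would follow from Itô's formula combined with a localization keeping the process strictly inside $D$. Namely, I would define $\sigma_n := \inf\{t > 0 : \widehat{q}_t \notin (1/n, 1 - 1/n)\} \wedge n$; on $[0, \sigma_n]$ the pair $(\widehat{q}_s, \widehat{p}_s)$ evolves in a compact subset of $D$ on which $h$ is smooth, so Itô's formula and $\mathcal{L}_0 h = 0$ yield
$$h(\widehat{q}_{t \wedge \sigma_n}, \widehat{p}_{t \wedge \sigma_n}) = h(q,p) + \int_0^{t \wedge \sigma_n} \partial_p h(\widehat{q}_s, \widehat{p}_s) \, \mathrm{d}B_s,$$
which is an $L^2$-martingale for each fixed $n$. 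To pass to the limit $n \to \infty$, I would use that $\sigma_n \nearrow \widehat{\tau}_\partial$ almost surely, that $h$ extends continuously to $\overline{D}$ by the asymptotics in Remark~\ref{asymptotics g}, and the uniform bound $h(\widehat{q}_s, \widehat{p}_s) \leq C(1 + \sqrt{(\widehat{p}_s)_+})$ valid for $\widehat{q}_s \in [0,1]$, together with Gaussian estimates on $\sup_{s \leq t} |\widehat{p}_s|$. This gives uniform integrability, allowing me to pass to the limit in the tower property and conclude the martingale property of $(h(\widehat{q}_{t \wedge \widehat{\tau}_\partial}, \widehat{p}_{t \wedge \widehat{\tau}_\partial}))_{t \geq 0}$.

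The second martingale is handled identically. A direct check shows $\mathcal{L}_0[f(1-q, -p)] = (\mathcal{L}_0 f)(1-q, -p)$, so $(q,p) \mapsto h(1-q, -p)$ is $\mathcal{L}_0$-harmonic on $D$ as well, and the very same localization–limit scheme applies, with the exponential decay of $g$ at $-\infty$ ensuring that this second function is even better behaved. The main obstacle I anticipate is the control of the stochastic integral close to the boundary $\{q=0\}$: one computes $\partial_p h(q,p) = q^{-1/6} g'(p/q^{1/3})$, which is singular as $q \to 0^+$ whenever $p > 0$. Establishing — via the asymptotics of $g'$ and the fact that the process spends little time near the boundary — that $\int_0^{t \wedge \sigma_n}|\partial_p h(\widehat{q}_s, \widehat{p}_s)|^2 \mathrm{d}s$ is finite and bounded uniformly in $n$ is the analytically delicate point of the argument.
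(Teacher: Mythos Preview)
Your approach is essentially the paper's: establish $\mathcal{L}_0 h = 0$ on $D$ (the paper simply cites \cite[Lemma~2.1]{GP} for this, whereas you derive it from Kummer's equation), apply It\^o to get a local martingale, and then upgrade via an integrability bound on $h$ along the trajectory. The paper's execution is a bit leaner: instead of an explicit localization sequence $\sigma_n$ and a uniform-integrability limit, it invokes directly the criterion that a continuous local martingale $(M_t)$ with $\mathbb{E}\bigl[\sup_{s\le t}|M_s|\bigr]<\infty$ is a true martingale, and reduces this to $\mathbb{E}\bigl[\sup_{s\le t}\sqrt{|\widehat{p}_s|}\bigr]<\infty$ via the asymptotics of $g$ and the trivial bound $\widehat{q}_s\in(0,1)$.

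Your closing worry is misplaced on two counts. First, it is unnecessary: once you have the domination $h(\widehat{q}_s,\widehat{p}_s)\le C\bigl(1+\sqrt{(\widehat{p}_s)_+}\bigr)$ for $\widehat{q}_s\in(0,1)$ and $\mathbb{E}\bigl[\sup_{s\le t}|\widehat{p}_s|\bigr]<\infty$, the family $\bigl(h(\widehat{q}_{t\wedge\sigma_n},\widehat{p}_{t\wedge\sigma_n})\bigr)_n$ is dominated by a single integrable variable, and the martingale property passes to the limit by dominated convergence in the conditional expectation; no uniform control on $\int_0^{t\wedge\sigma_n}|\partial_p h|^2\,\mathrm{d}s$ is required. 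Second, the claimed singularity is illusory: for $p>0$ fixed and $q\to 0^+$ one has $z=p/q^{1/3}\to+\infty$, and since $g(z)\sim\sqrt{z}$ the ODE forces $g'(z)\sim\tfrac{1}{2}z^{-1/2}$, whence $\partial_p h(q,p)=q^{-1/6}g'(p/q^{1/3})\to \tfrac{1}{2\sqrt{p}}$, which is perfectly finite. The only genuine blow-up of $\partial_p h$ is along $\{p=0,\,q\to 0\}$, i.e.\ near the singular point $(0,0)\in\Gamma^0$, and this plays no role in the argument.
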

\begin{proof}
Let $(q,p)\in D$. Since $h\in\mathcal{C}^{\infty}(D)$ and $\mathcal{L}h=0$ on $D$, see~\cite[Lemma 2.1]{GP}, the Itô formula ensures that both processes are $(\mathcal{F}_t)_{t\geq0}$- local martingales under $\mathbb{P}_{(q,p)}$. They are martingales if one can prove that for all $t>0$, 
$$\mathbb{E}_{(q,p)}\left[\sup_{s\in[0,t\land\widehat{\tau}_\partial]} h(\widehat{q}_s,\widehat{p}_s)\right]<\infty \qquad\text{and}\qquad\mathbb{E}_{(q,p)}\left[\sup_{s\in[0,t\land\widehat{\tau}_\partial]}h(1-\widehat{q}_s,-\widehat{p}_s)\right]<\infty.$$
Using the definition of $h$ in~\eqref{def H with g} and the asymptotics of $g$ at $+\infty$ in Remark~\ref{asymptotics g}, one can see that both inequalities follow if one can show that
$$\mathbb{E}_{(q,p)}\left[\sup_{s\in[0,t]}\sqrt{\left\vert \widehat{p}_s\right\vert}\right]<\infty.$$
The Cauchy-Schwarz inequality ensures that it is sufficient to prove that
$$\mathbb{E}\left[\sup_{s\in[0,t]}\left\vert B_s\right\vert\right]<\infty.$$
Moreover,
\begin{align*}
    \mathbb{E}\left[\sup_{s\in[0,t]}\left\vert B_s\right\vert\right]&=\int_0^\infty\mathbb{P}\left(\sup_{s\in[0,t]}\left\vert B_s\right\vert>x\right)\mathrm{d}x\\ 
    &\leq\int_0^\infty4\mathbb{P}\left(B_t>x\right)\mathrm{d}x.
\end{align*}
Using the inequality $\mathbb{P}\left(B_t>x\right)\leq1\wedge\frac{\mathrm{e}^{-x^2/2t}}{\sqrt{2\pi}}\frac{\sqrt{t}}{x}$ concludes the proof.
\end{proof}
Let us now prove Proposition~\ref{prop:upper-bound}.
\begin{proof}[Proof of Proposition~\ref{prop:upper-bound}]
Let $t>0$, $(q,p)\in D$. Assume that $\frac{\vert p\vert}{q^{1/3}}\leq3/t$, one has using Remark~\ref{rmk scale change} that
\begin{align*}
    \mathbb{P}_{(q,p)}(\widehat{\tau}_\partial>t)&\leq\mathbb{P}_{(q,p)}(\widehat{\tau}_0>t)\\
    &=\mathbb{P}_{(1,p/q^{1/3})}(\widehat{\tau}_0>t/q^{2/3})\\
    &\leq \mathbb{P}_{(1,3/t)}(\widehat{\tau}_0>t/q^{2/3}),
\end{align*}
 since $\mathbb{P}_{(q,p)}(\widehat{\tau}_0>t)$ is a non-decreasing function of $p$. Therefore,
 \begin{align*}
    \frac{\mathbb{P}_{(q,p)}(\widehat{\tau}_\partial>t)}{h(q,p)}&\leq \frac{\mathbb{P}_{(1,3/t)}(\widehat{\tau}_0>t/q^{2/3})}{q^{1/6}g(p/q^{1/3})}\\ 
    &\leq \frac{1}{g(-3/t)}\frac{\mathbb{P}_{(1,3/t)}(\widehat{\tau}_0>t/q^{2/3})}{q^{1/6}},
\end{align*} 
since $g$ is a non-decreasing function. Besides, the term in the right-hand side of the inequality above is bounded when $q\rightarrow0$. In fact, using the long-time asymptotics~\eqref{long time asymptotics} from~\cite{isozaki1994,GP} one has the existence of a constant $\alpha_t>0$ such that
$$\frac{\mathbb{P}_{(1,3/t)}(\widehat{\tau}_0>t/q^{2/3})}{q^{1/6}}\underset{q\rightarrow0}{\sim}\frac{\alpha_tq^{1/6}}{q^{1/6}}=\alpha_t.$$ 
This ensures that for any $t>0$, there exists $\beta_t>0$ such that for all $(q,p)\in D$ satisfying $\vert p\vert/q^{1/3}\leq 3/t$,
\begin{equation}\label{ineq compact upper-bound}
    \mathbb{P}_{(q,p)}(\widehat{\tau}_\partial>t)\leq \beta_t h(q,p).
\end{equation}
Assume now that $\frac{\vert p\vert}{q^{1/3}}> 3/t$.
Let $$\widehat{\pi}_t:=\inf\{s\geq0: \vert \widehat{p}_s\vert/\widehat{q}_s^{1/3}\leq 3/t\}.$$
Necessarily, $\widehat{\tau}_\partial\wedge\widehat{\pi}_t\leq t/2$ almost surely. In fact if $\widehat{\tau}_\partial\wedge\widehat{\pi}_t>t/2$, then, depending on the sign of $\widehat{p}_0$, one has by continuity of the trajectory $(\widehat{p}_s)_{s\geq0}$ that, almost-surely, for all $s\in[0,t/2]$, 
$$\frac{\widehat{p}_s}{\widehat{q}^{1/3}_s}=\frac{\frac{\mathrm{d}\widehat{q}_s}{\mathrm{d}s}}{\widehat{q}^{1/3}_s} \leq -3/t\quad\text{or}\quad \frac{\widehat{p}_s}{\widehat{q}^{1/3}_s}=\frac{\frac{\mathrm{d}\widehat{q}_s}{\mathrm{d}s}}{\widehat{q}^{1/3}_s}\geq 3/t.$$  As a result, integrating $\widehat{p}_s/\widehat{q}^{1/3}_s$ over $s\in[0,t/2]$, one obtains that
$$\widehat{q}_{t/2}^{2/3}\leq q^{2/3}-1<0,\qquad\text{or}\qquad \widehat{q}_{t/2}^{2/3}\geq q^{2/3}+1>1,$$
which contradicts the fact that $\widehat{\tau}_\partial>t/2$. As a result, one has by the strong Markov property and~\eqref{ineq compact upper-bound} 
\begin{align*}
    \mathbb{P}_{(q,p)}(\widehat{\tau}_\partial>t)&=\mathbb{E}_{(q,p)}\left[\mathbb{1}_{\widehat{\tau}_\partial>\widehat{\pi}_t}\mathbb{P}_{(\widehat{q}_{\widehat{\pi}_t},\widehat{p}_{\widehat{\pi}_t})}\left(\widehat{\tau}_\partial>t-s\right)\vert_{s=\widehat{\pi}_t}\right]\\
    &\leq\mathbb{E}_{(q,p)}\left[\mathbb{1}_{\widehat{\tau}_\partial>\widehat{\pi}_t}\mathbb{P}_{(\widehat{q}_{\widehat{\pi}_t},\widehat{p}_{\widehat{\pi}_t})}\left(\widehat{\tau}_\partial>t/2\right)\right]\\
    &\leq \beta_{t/2}\,\mathbb{E}_{(q,p)}\left[\mathbb{1}_{\widehat{\tau}_\partial>\widehat{\pi}_t}h(\widehat{q}_{\widehat{\pi}_t},\widehat{p}_{\widehat{\pi}_t})\right]\\ 
    &\leq \beta_{t/2}\,\mathbb{E}_{(q,p)}\left[h(\widehat{q}_{\widehat{\pi}_t\wedge\widehat{\tau}_\partial},\widehat{p}_{\widehat{\pi}_t\wedge\widehat{\tau}_\partial})\right]=\beta_{t/2} h(q,p), 
\end{align*}
by Lemma~\ref{lemma martingale} and Doob's optional sampling theorem since $\widehat{\pi}_t\wedge\widehat{\tau}_\partial\leq t/2$ almost-surely. Therefore, for all $t>0$, there exists $c_t>0$ such that for all $(q,p)\in D$,
$$\mathbb{P}_{(q,p)}(\widehat{\tau}_\partial>t)\leq c_t h(q,p).$$
Using Remark~\ref{invariant proba}, we also have that 
$$\mathbb{P}_{(q,p)}(\widehat{\tau}_\partial>t)=\mathbb{P}_{(1-q,-p)}(\widehat{\tau}_\partial>t)\leq c_th(1-q,-p),$$ 
hence~\eqref{ineq:prop-upper-bound}, which concludes the proof.

\end{proof}
\subsubsection{Lower-bound on the first exit time probability}\label{lower-bound subsection}
We shall prove in this section the lower-bound in Proposition~\ref{prop:two-sided proba}. This proof is more complex as it requires a careful study depending on the location in the phase space.  We represent below the domain $D$ with its boundary \textcolor{blue}{$\Gamma^+$}, \textcolor{red}{$\Gamma^-$} and \textcolor{green}{$\Gamma^0$} along with a partition \textbf{1}, \textbf{2} and \textbf{3} of $D$ used in the lower-bound proof.
\begin{center}
    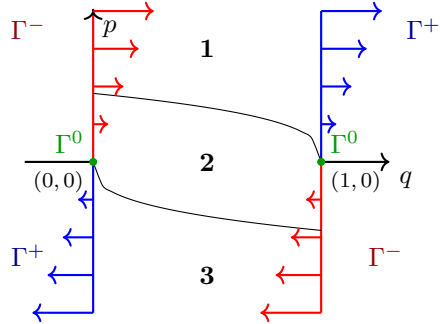
\begin{figure}[h]
        \centering
        \begin{tikzpicture}
                \draw (2,0.5) node[anchor=north west] {\footnotesize{$(1,0)$}};
        \draw (-1,0.5) node[anchor=north east] {\footnotesize{$(0,0)$}};
        
        
        \draw[thick,red] (-1,0.5) -- (-1,2.5);
        \draw[thick,red] (2,0.5) -- (2,-1.5); 
        \draw[thick,blue] (-1,0.5) -- (-1,-1.5);
        \draw[thick,blue] (2,0.5) -- (2,2.5); 
        \draw[thick] (-1.9,0.5) -- (-1,0.5);
        \draw[thick,->] (2,0.5) -- (2.9,0.5)node[anchor=north west] {$q$};
        \draw[thick,->] (-1,2.5) -- (-1,2.5)node[anchor=north west] {$p$};
        
        \draw[scale=0.5, domain=-2:4, smooth, variable=\x, black] plot ({\x}, {1+(4-\x)^(1/3)});
        \draw[scale=0.5, domain=-2:4, smooth, variable=\x, black] plot ({\x}, {1-(\x+2)^(1/3)});
        \node[thick] at (0.5,0.5) {\textbf{2}};
        \node[thick] at (0.5,-1) {\textbf{3}};
        \node[thick] at (0.5,2) {\textbf{1}};
         \draw[black!40!blue,fill=black!40!blue] (-1.5,-1.0) node[anchor=south east] {$\Gamma^+$};
         \draw[black!40!blue,fill=black!40!blue] (3,2.0) node[anchor=south west] {$\Gamma^+$};
         \draw[black!40!red,fill=red] (-1.5,2.0) node[anchor=south east] {$\Gamma^-$};
         \draw[black!40!red,fill=red] (2.5,-1) node[anchor=south west] {$\Gamma^-$};
         \draw[black!40!green,fill=black!40!green] (-1,0.5) circle (.3ex)node[anchor=south east] {$\Gamma^0$};
         \draw[black!40!green,fill=black!40!green] (2,0.5) circle (.3ex)node[anchor=south west] {$\Gamma^0$};
        \draw[thick,red,->] (-1,1) -- (-0.8,1);
        \draw[thick,red,->] (-1,1.5) -- (-0.6,1.5);
        \draw[thick,red,->] (-1,2) -- (-0.4,2);
        \draw[thick,red,->] (-1,2.5) -- (-0.2,2.5); 
        \draw[thick,red,->] (2,0) -- (1.8,0);
        \draw[thick,red,->] (2,-0.5) -- (1.6,-0.5);
        \draw[thick,red,->] (2,-1) -- (1.4,-1);
        \draw[thick,red,->] (2,-1.5) -- (1.2,-1.5);
        \draw[thick,blue,->] (2,1) -- (2.2,1);
        \draw[thick,blue,->] (2,1.5) -- (2.4,1.5);
        \draw[thick,blue,->] (2,2) -- (2.6,2);
        \draw[thick,blue,->] (2,2.5) -- (2.8,2.5);
        \draw[thick,blue,->] (-1,0) -- (-1.2,0);
        \draw[thick,blue,->] (-1,-0.5) -- (-1.4,-0.5);
        \draw[thick,blue,->] (-1,-1) -- (-1.6,-1);
        \draw[thick,blue,->] (-1,-1.5) -- (-1.8,-1.5); 
        \end{tikzpicture} 
        \caption{Domain decomposition}
    \end{figure} 
\end{center}
 The idea is that the minimum defining $H$ will be lead by $h(1-q,-p)$ for large positive velocities occuring in \textbf{1} and by $h(q,p)$ for large negative velocities occuring in \textbf{3}. As said in the introduction large velocities have to considered compared to the distance boundary. Given the expression of $h$, it seems that the proper scaling to be considered are $p/q^{1/3}$ or $-p/(1-q)^{1/3}$. The domain \textbf{2} is therefore delimited by the curbs of equations $p=-3q^{1/3}/t$ and $p=3(1-q)^{1/3}/t$. In \textbf{2}, we shall prove directly the lower bound $H$ where we shall use the fact that \textbf{2} is a compact in $(q,p)$.
 
 We first prove the lower-bound until some time $t_0>0$ and then generalize it in Proposition~\ref{lower-bound 2} for any time $t>0$. Therefore, we prove here the existence of a constant $c'_t>0$ depending on $t>0$ such that for all $t\in(0,t_0]$,
\begin{itemize}
    \item $\forall (q,p)\in$ \textbf{1}, $\mathbb{P}_{(q,p)}(\widehat{\tau}_\partial>t)\geq c'_th(1-q,-p)$.
    \item $\forall (q,p)\in$ \textbf{2}, $\mathbb{P}_{(q,p)}(\widehat{\tau}_\partial>t)\geq c'_t(h(1-q,-p)\wedge h(q,p))$.
    \item $\forall (q,p)\in$ \textbf{3}, $\mathbb{P}_{(q,p)}(\widehat{\tau}_\partial>t)\geq c'_th(q,p)$,
\end{itemize}
thus leading to the lower-bound proof. The generalization for any $t>0$ is done later in this section. Besides, since \textbf{1} and \textbf{3} are symmetric through the transformation $(q,p)\mapsto(1-q,-p)$ it is sufficient to prove the first two inequalities. 
\begin{proposition}[Lower-bound]\label{prop:lower bound}
There exists $t_0>0$ such that for all $t\in(0,t_0]$ there exists a constant $c'_t>0$ such that
$$\forall (q,p)\in D,\qquad \mathbb{P}_{(q,p)}(\widehat{\tau}_\partial>t)\geq c'_t\,H(q,p).$$ 
\end{proposition}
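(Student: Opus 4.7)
My plan is to use the region decomposition shown in the authors' figure, reducing regions \textbf{1} and \textbf{3} to each other via the symmetry $(q,p) \leftrightarrow (1-q,-p)$ of Remark~\ref{invariant proba}. It therefore suffices to establish the lower bound on regions \textbf{2} and \textbf{3}.

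For region \textbf{2}, where $|p| \leq 3/t$, I would prove $\mathbb{P}_{(q,p)}(\widehat{\tau}_\partial > t) \geq c_t H(q,p)$ directly. Away from both boundaries, on any compact subset $\{q \in [\varepsilon, 1-\varepsilon], |p| \leq 3/t\}$, both sides are continuous and strictly positive, so compactness gives a uniform positive ratio. Near $q = 0$, I would write $\mathbb{P}_{(q,p)}(\widehat{\tau}_\partial > t) \geq \mathbb{P}_{(q,p)}(\widehat{\tau}_0 > t) - \mathbb{P}_{(q,p)}(\widehat{\tau}_1 \leq t)$ and apply the rescaling $\mathbb{P}_{(q,p)}(\widehat{\tau}_0 > t) = \mathbb{P}_{(1, p/q^{1/3})}(\widehat{\tau}_0 > t/q^{2/3})$ from Remark~\ref{rmk scale change}: since $t/q^{2/3} \to \infty$ as $q \to 0^+$, the long-time asymptotic~\eqref{long time asymptotics} yields $\mathbb{P}_{(q,p)}(\widehat{\tau}_0 > t) \geq c\, h(q,p)$ for $q$ small enough. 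The correction $\mathbb{P}_{(q,p)}(\widehat{\tau}_1 \leq t)$ is controlled by the symmetric scaling applied at $(1-q,-p)$ and remains dominated by $H(q,p)$. The case $q$ close to $1$ is symmetric.

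For region \textbf{3}, where $p < -3q^{1/3}/t$ (and hence $H(q,p) = h(q,p)$), I would reuse the stopping time $\widehat{\pi}_t = \inf\{s \geq 0 : |\widehat{p}_s|/\widehat{q}_s^{1/3} \leq 3/t\}$, which the authors have shown satisfies $\widehat{\pi}_t \wedge \widehat{\tau}_\partial \leq t/2$ almost surely. By the strong Markov property together with the region-\textbf{2} lower bound applied at time $t/2$,
\begin{equation*}
\mathbb{P}_{(q,p)}(\widehat{\tau}_\partial > t) \;\geq\; \mathbb{E}_{(q,p)}\!\left[\mathbb{1}_{\widehat{\pi}_t < \widehat{\tau}_\partial}\, \mathbb{P}_{(\widehat{q}_{\widehat{\pi}_t}, \widehat{p}_{\widehat{\pi}_t})}(\widehat{\tau}_\partial > t/2)\right] \;\geq\; c_t\, \mathbb{E}_{(q,p)}\!\left[\mathbb{1}_{\widehat{\pi}_t < \widehat{\tau}_\partial}\, h(\widehat{q}_{\widehat{\pi}_t}, \widehat{p}_{\widehat{\pi}_t})\right],
\end{equation*}
since a trajectory entering region \textbf{2} from region \textbf{3} must do so across the curve $\{p = -3q^{1/3}/t\}$, where $H = h$. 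Lemma~\ref{lemma martingale} and Doob's optional sampling then yield
\begin{equation*}
h(q,p) = \mathbb{E}_{(q,p)}\!\left[\mathbb{1}_{\widehat{\pi}_t < \widehat{\tau}_\partial}\, h(\widehat{q}_{\widehat{\pi}_t}, \widehat{p}_{\widehat{\pi}_t})\right] + \mathbb{E}_{(q,p)}\!\left[\mathbb{1}_{\widehat{\tau}_\partial \leq \widehat{\pi}_t}\, h(\widehat{q}_{\widehat{\tau}_\partial}, \widehat{p}_{\widehat{\tau}_\partial})\right],
\end{equation*}
and the leakage term is supported on exits at $q=1$ (since $h$ vanishes on the other part of $\Gamma^+$). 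From region \textbf{3}, such an exit before time $t/2$ forces the integrated Brownian motion to overcome the strong negative drift $p$ and carry the position all the way from $q$ to $1$ in a short time window, a large-deviation event whose probability, combined with the square-root growth of $\sqrt{\widehat{p}_{\widehat{\tau}_1}}$, can be bounded by $\tfrac{1}{2} h(q,p)$ once $t_0$ is chosen sufficiently small. This gives $\mathbb{E}[\mathbb{1}_{\widehat{\pi}_t < \widehat{\tau}_\partial} h(\widehat{q}_{\widehat{\pi}_t}, \widehat{p}_{\widehat{\pi}_t})] \geq \tfrac{1}{2} h(q,p)$ and the desired lower bound $\mathbb{P}_{(q,p)}(\widehat{\tau}_\partial > t) \geq (c_t/2)\, H(q,p)$.

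The main obstacle is the region-\textbf{2} analysis near the boundary strips: matching the exact $q^{1/6}$-rate of $H$ with that of the survival probability requires the rescaling identity combined with the long-time asymptotic uniformly down to $q = 0$, and care must be taken that the correction term $\mathbb{P}_{(q,p)}(\widehat{\tau}_1 \leq t)$ does not dominate. The region-\textbf{3} leakage estimate is a secondary technical point that constrains $t_0$ to be sufficiently small so that the large-deviation probability of the required Brownian excursion is truly negligible compared to $h(q,p)$.
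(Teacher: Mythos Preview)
Your overall architecture matches the paper's: region decomposition, symmetry between \textbf{1} and \textbf{3}, strong Markov plus the $h$-martingale in region \textbf{3}. But there is a genuine gap in your region-\textbf{2} argument, and you misplace the role of $t_0$.

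\textbf{Region 2.} Your decomposition $\mathbb{P}_{(q,p)}(\widehat{\tau}_\partial>t)\geq \mathbb{P}_{(q,p)}(\widehat{\tau}_0>t)-\mathbb{P}_{(q,p)}(\widehat{\tau}_1\leq t)$ is too crude. Near $q=0$ the correction $\mathbb{P}_{(q,p)}(\widehat{\tau}_1\leq t)=1-\mathbb{P}_{(1-q,-p)}(\widehat{\tau}_0>t)$ converges to a strictly positive number (since $1-q\to 1$), whereas $H(q,p)$ can vanish: take $(q,p)=(q,0)$, where $H(q,0)=q^{1/6}g(0)\to 0$. So the correction is \emph{not} dominated by $H(q,p)$, and your lower bound becomes negative. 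The paper avoids this by controlling the \emph{joint} event $\mathbb{P}_{(q,p)}(\widehat{\tau}_0>t,\widehat{\tau}_1\leq t)$ instead (Lemma~\ref{lemma prob cond}), showing it is at most $\frac{t^{1/4}}{Cg(0)}\,\mathbb{P}_{(q,p)}(\widehat{\tau}_0>t)$ via a careful argument that applies the strong Markov property at the first zero of the velocity and uses Lachal's explicit laws for $(\widehat{q}_{\widehat{\pi}_0},\widehat{p}_{\widehat{\pi}_0})$. \emph{This} is where $t_0$ must be taken small, so that the factor $t^{1/4}/(Cg(0))$ is strictly below $1$.

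\textbf{Region 3.} Your leakage term is actually zero, and no large-deviation estimate is needed. Starting from $p/q^{1/3}<-3/t$, the position can only reach $1$ after the velocity becomes positive; but the moment $\widehat{p}_s$ crosses $0$ one already has $\widehat{p}_s/\widehat{q}_s^{1/3}=0>-3/t$, so $\widehat{\pi}_t$ has fired. Hence $\widehat{\tau}_1>\widehat{\pi}_t$ almost surely, and on $\{\widehat{\tau}_\partial\leq\widehat{\pi}_t\}$ the exit is at $q=0$ with $\widehat{p}_{\widehat{\tau}_0}\leq 0$, where $h$ vanishes. The paper exploits this by running the martingale with $\widehat{\pi}_t\wedge\widehat{\tau}_0$ rather than $\widehat{\pi}_t\wedge\widehat{\tau}_\partial$, giving $\mathbb{E}_{(q,p)}[h(\widehat{q}_{\widehat{\pi}_t\wedge\widehat{\tau}_0},\widehat{p}_{\widehat{\pi}_t\wedge\widehat{\tau}_0})]=h(q,p)$ exactly. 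Consequently no smallness of $t_0$ is required in this step.
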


This proposition will later be extended for all time $t>0$ in~Proposition~\ref{lower-bound 2}. The proof relies on the two following lemmas. 
\begin{lemma}\label{lower bound tau_0 tau_1}
For any $t>0$ there exists $c'_{t}>0$ such that 
$$\forall q\in(0,1), \forall p\in[-3q^{1/3}/t,3(1-q)^{1/3}/t],\qquad\mathbb{P}_{(q,p)}(\widehat{\tau}_0>t)\geq c'_{t}h(q,p).$$ 
\end{lemma}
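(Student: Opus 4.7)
The plan is to combine the scaling property of Remark~\ref{rmk scale change} with the long-time asymptotics~\eqref{long time asymptotics} from~\cite{isozaki1994, GP}. Applying the scaling with $\lambda = q^{-1/3}$ gives $\mathbb{P}_{(q,p)}(\widehat{\tau}_0 > t) = \mathbb{P}_{(1, u)}(\widehat{\tau}_0 > s)$ where $u := p/q^{1/3}$ and $s := t/q^{2/3}$, and since $h(q,p) = (t/s)^{1/4} g(u)$, the target inequality reduces to
$$ s^{1/4}\,\mathbb{P}_{(1, u)}(\widehat{\tau}_0 > s) \geq c_t\,g(u), $$
for $s \in [t, \infty)$ and $u \in [-3/t,\, 3 s^{1/2}/t^{3/2}]$, where the upper bound on $u$ uses $(1-q)^{1/3}/q^{1/3} \leq (s/t)^{1/2}$.

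I would handle this by splitting into several regimes. When $s$ is bounded, i.e.\ $q \geq q_0 > 0$, the admissible pairs $(q,p)$ lie in the compact set $[q_0, 1] \times [-3/t, 3/t]$ on which $(q,p) \mapsto \mathbb{P}_{(q,p)}(\widehat{\tau}_0 > t)$ is continuous and strictly positive, and $h$ is continuous and bounded; compactness then gives a uniform positive lower bound for the ratio. When $s$ is large and $u$ in a bounded subinterval $[-3/t, U]$, I would upgrade the pointwise convergence $s^{1/4}\mathbb{P}_{(1, u)}(\widehat{\tau}_0 > s) \to \tfrac{3\Gamma(1/4)}{2^{3/4}\pi^{3/2}} g(u)$ from~\eqref{long time asymptotics} to a uniform convergence on the compact $[-3/t, U]$, which yields the bound for $s$ large enough.

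The remaining regime of large positive velocity, $u \in [U, 3 s^{1/2}/t^{3/2}]$, is handled by reapplying Remark~\ref{rmk scale change} with $\lambda = 1/u$ to write $\mathbb{P}_{(1, u)}(\widehat{\tau}_0 > s) = \mathbb{P}_{(u^{-3}, 1)}(\widehat{\tau}_0 > s/u^2)$. The constraint $u \leq 3 s^{1/2}/t^{3/2}$ gives $s/u^2 \geq t^3/9$, bounded below, and by Remark~\ref{asymptotics g}, $h(u^{-3}, 1) = u^{-1/2} g(u) \to 1$ as $u \to \infty$. The inequality then reduces to a uniform lower bound on $\mathbb{P}_{(q', 1)}(\widehat{\tau}_0 > s')$ for $q' = u^{-3}$ small and $s' \geq t^3/9$, which I would obtain by a Markov-property argument at a small fixed time (pushing the starting point into a compact subset of $D$ with probability bounded below in $q'$), followed by the uniform long-time asymptotics on that compact set. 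The main obstacle throughout is promoting the long-time asymptotics~\eqref{long time asymptotics} to uniform convergence on compact subsets of starting points, which I would extract from the explicit integral representations of $\mathbb{P}_{(1,u)}(\widehat{\tau}_0 > s)$ in~\cite{isozaki1994, GP}.
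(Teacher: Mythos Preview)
Your overall strategy is sound and the reduction via the scaling of Remark~\ref{rmk scale change} is the same as the paper's. The main difference is in the large-$u$ regime. The paper splits according to whether $p_n\to p_\infty>0$ (handled by the crude inclusion $\{\inf_{[0,t]}B_s\geq -p_n\}\subset\{\widehat{\tau}_0>t\}$) or $p_n\to 0$; the latter is the delicate case, treated by stopping at $\widehat{\pi}_0=\inf\{s:\widehat{p}_s=0\}$, applying the martingale property of $h$ (Lemma~\ref{lemma martingale}) together with the bounded-$u$ step, and controlling the remainder via H\"older and Lachal's explicit density of $\widehat{q}_{\widehat{\pi}_0}$. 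Your rescaling $\mathbb{P}_{(1,u)}(\widehat{\tau}_0>s)=\mathbb{P}_{(u^{-3},1)}(\widehat{\tau}_0>s')$ with $s'=s/u^2=t/p^2$ maps these two sub-cases precisely to $s'$ bounded vs.\ $s'\to\infty$, and your Markov-at-time-$\delta$ argument (pushing $(q',1)$ into a fixed compact of $(0,\infty)\times\mathbb{R}$ with probability $\geq \mathbb{P}(\sup_{[0,\delta]}|B|<1/2)$ uniformly in $q'$) then reduces everything to the pointwise asymptotics at a single starting point. This avoids both the martingale machinery and Lachal's formula, so it is a cleaner route to the same conclusion.

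One correction: what you call ``the main obstacle''---upgrading~\eqref{long time asymptotics} to uniform convergence on compacts---is not needed at all. Since $\mathbb{P}_{(q,p)}(\widehat{\tau}_0>s)$ is non-decreasing in both $q$ and $p$ (trivial pathwise coupling), the uniform lower bound on any compact $K$ follows immediately from the pointwise asymptotic at the bottom-left corner of $K$. This is exactly what the paper does in its bounded-$u$ case (it bounds below by $\mathbb{P}_{(1,-3/t)}(\widehat{\tau}_0>t/q_n^{2/3})$), and you should do the same in your regime~2 and after the Markov step in regime~3. Also, be careful with the phrase ``uniform lower bound on $\mathbb{P}_{(q',1)}(\widehat{\tau}_0>s')$'': what you actually need (and obtain) is a bound of the form $c(s')^{-1/4}$, not a constant, since $s'=t/p^2$ is unbounded.
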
  
\begin{lemma}\label{lemma prob cond}
There exists $t_0>0$ such that for all $t\in(0,t_0]$ there exists a constant $\alpha_{t}>0$ such that    
$$\forall q\in(0,1), \forall p\in[-3q^{1/3}/t,3(1-q)^{1/3}/t],\qquad\mathbb{P}_{(q,p)}(\widehat{\tau}_\partial>t)\geq\alpha_{t}(\mathbb{P}_{(q,p)}(\widehat{\tau}_0>t)\wedge\mathbb{P}_{(q,p)}(\widehat{\tau}_1>t)).$$ 
\end{lemma}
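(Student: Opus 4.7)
My strategy combines a symmetry reduction with the martingale identity of Lemma~\ref{lemma martingale} and the lower bound of Lemma~\ref{lower bound tau_0 tau_1}. By Remark~\ref{invariant proba}, the transformation $(q,p)\mapsto(1-q,-p)$ exchanges $\widehat{\tau}_0$ and $\widehat{\tau}_1$, so I may assume without loss of generality that $\mathbb{P}_{(q,p)}(\widehat{\tau}_0>t)\leq\mathbb{P}_{(q,p)}(\widehat{\tau}_1>t)$, and aim to establish $\mathbb{P}_{(q,p)}(\widehat{\tau}_\partial>t)\geq\alpha_t\mathbb{P}_{(q,p)}(\widehat{\tau}_0>t)$. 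Writing
\[\mathbb{P}(\widehat{\tau}_\partial>t)=\mathbb{P}(\widehat{\tau}_0>t)-\mathbb{P}(\widehat{\tau}_0>t,\widehat{\tau}_1\leq t)\]
reduces the problem to bounding the correction term $\mathbb{P}(\widehat{\tau}_0>t,\widehat{\tau}_1\leq t)$ by a fraction strictly less than $1$ of $\mathbb{P}(\widehat{\tau}_0>t)$.

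I would then apply Doob's optional sampling at time $t$ to the bounded-variation martingale $s\mapsto h(\widehat{q}_{s\wedge\widehat{\tau}_\partial},\widehat{p}_{s\wedge\widehat{\tau}_\partial})$ from Lemma~\ref{lemma martingale}. On $\{\widehat{\tau}_\partial=\widehat{\tau}_0\leq t\}$ one has $\widehat{p}_{\widehat{\tau}_0}\leq 0$, so by the asymptotics of $g$ at $-\infty$ from Remark~\ref{asymptotics g} the boundary value $h(0,\widehat{p}_{\widehat{\tau}_0})$ vanishes; on $\{\widehat{\tau}_\partial=\widehat{\tau}_1\leq t\}$ one has $\widehat{p}_{\widehat{\tau}_1}\geq 0$ and $h(1,\widehat{p}_{\widehat{\tau}_1})=g(\widehat{p}_{\widehat{\tau}_1})\geq g(0)>0$ since $g$ is positive and nondecreasing. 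Combining these,
\[h(q,p)=\mathbb{E}\bigl[h(\widehat{q}_t,\widehat{p}_t)\mathbb{1}_{\widehat{\tau}_\partial>t}\bigr]+\mathbb{E}\bigl[g(\widehat{p}_{\widehat{\tau}_1})\mathbb{1}_{\widehat{\tau}_\partial=\widehat{\tau}_1\leq t}\bigr]\geq g(0)\,\mathbb{P}_{(q,p)}(\widehat{\tau}_\partial=\widehat{\tau}_1\leq t).\]
Because $\{\widehat{\tau}_0>t,\widehat{\tau}_1\leq t\}\subseteq\{\widehat{\tau}_\partial=\widehat{\tau}_1\leq t\}$, this yields the key inequality $\mathbb{P}(\widehat{\tau}_0>t,\widehat{\tau}_1\leq t)\leq h(q,p)/g(0)$. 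Substituting the estimate $h(q,p)\leq\mathbb{P}(\widehat{\tau}_0>t)/c'_t$ from Lemma~\ref{lower bound tau_0 tau_1} into the decomposition gives
\[\mathbb{P}_{(q,p)}(\widehat{\tau}_\partial>t)\geq\Bigl(1-\frac{1}{c'_t g(0)}\Bigr)\mathbb{P}_{(q,p)}(\widehat{\tau}_0>t),\]
which is exactly the desired inequality with $\alpha_t:=1-1/(c'_t g(0))$, provided $t_0>0$ is chosen small enough that $c'_t g(0)>1$ on $(0,t_0]$.

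The main obstacle is therefore to ensure $c'_t g(0)>1$ for small $t$. Taken over all of region~$2$, the constant $c'_t$ from Lemma~\ref{lower bound tau_0 tau_1} may degenerate at extremal points such as $p\approx 3(1-q)^{1/3}/t$, where $h(q,p)\sim\sqrt{p}$ is large while $\mathbb{P}_{(q,p)}(\widehat{\tau}_0>t)\approx 1$, giving a ratio of order $\sqrt{t}\to 0$. However, such extremal points lie precisely in the complementary regime $\mathbb{P}(\widehat{\tau}_1>t)<\mathbb{P}(\widehat{\tau}_0>t)$ and are treated by the symmetric case, hence do not appear in the effective WLOG subdomain. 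On the latter, the scaling of Remark~\ref{rmk scale change} combined with the long-time asymptotics~\eqref{long time asymptotics} give $\mathbb{P}_{(q,p)}(\widehat{\tau}_0>t)/h(q,p)\asymp t^{-1/4}\to\infty$ as $t\to 0$ when $t/q^{2/3}$ is large, and one has a uniform positive lower bound on the complementary bounded subset where $(q,p)$ is controlled. Making this restricted lower bound on $c'_t$ rigorous over the WLOG domain is the technical heart of the proof, after which the choice of $t_0$ follows.
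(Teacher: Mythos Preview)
Your approach follows the same decomposition as the paper (write $\mathbb{P}(\widehat{\tau}_\partial>t)=\mathbb{P}(\widehat{\tau}_0>t)-\mathbb{P}(\widehat{\tau}_0>t,\widehat{\tau}_1\leq t)$ and bound the correction by $h(q,p)/g(0)$), but your derivation of the key inequality is genuinely cleaner. By applying Doob at time $t$ to the martingale $h(\widehat{q}_{\cdot\wedge\widehat{\tau}_\partial},\widehat{p}_{\cdot\wedge\widehat{\tau}_\partial})$ and using that $h$ vanishes at the $\widehat{\tau}_0$-exit and is at least $g(0)$ at the $\widehat{\tau}_1$-exit, you obtain
\[
\mathbb{P}_{(q,p)}(\widehat{\tau}_0>t,\widehat{\tau}_1\leq t)\leq \mathbb{P}_{(q,p)}(\widehat{\tau}_\partial=\widehat{\tau}_1\leq t)\leq \frac{h(q,p)}{g(0)}
\]
for \emph{all} $(q,p)\in D$ at once. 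The paper arrives at the same bound (its inequality~\eqref{eq:majo1}) but via the intermediate stopping time $\widehat{\pi}_0=\inf\{s:\widehat{p}_s=0\}$, Lachal's explicit formula for $\mathbb{P}_{(q,0)}(\widehat{\tau}_0>\widehat{\tau}_1)$, and a separate treatment of the case $p_n>0$ with an extra error term $\mathbb{P}(\widehat{\pi}_0\geq\widehat{\tau}_1)$ controlled through Lachal's density for $\widehat{q}_{\widehat{\pi}_0}$. Your route eliminates all of this.

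The last paragraph, however, is only a sketch, and the stated plan has a genuine gap. You cannot simply invoke the constant $c'_t$ from Lemma~\ref{lower bound tau_0 tau_1}: as you note, on the full region it may be of order $\sqrt{t}$, so $c'_t g(0)>1$ fails. Restricting to your WLOG subdomain $\{\mathbb{P}(\widehat{\tau}_0>t)\leq\mathbb{P}(\widehat{\tau}_1>t)\}$ does not immediately help, since this set is defined implicitly through the probabilities and depends on $t$, so you would need to re-prove Lemma~\ref{lower bound tau_0 tau_1} with a better constant on a moving, implicit domain. The paper sidesteps this by a compactness reduction: since the probabilities are continuous and positive on $D$ and region~\textbf{2} is compact, the ratio $\mathbb{P}(\widehat{\tau}_\partial>t)/\bigl(\mathbb{P}(\widehat{\tau}_0>t)\wedge\mathbb{P}(\widehat{\tau}_1>t)\bigr)$ can only degenerate along sequences where the minimum vanishes, namely $q_n\to 0$ with $\limsup p_n\leq 0$ (or the symmetric case). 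Along any such sequence one has $t/q_n^{2/3}\to\infty$, and the scaling of Remark~\ref{rmk scale change} together with~\eqref{long time asymptotics} then gives exactly
\[
\limsup_{n\to\infty}\frac{h(q_n,p_n)}{\mathbb{P}_{(q_n,p_n)}(\widehat{\tau}_0>t)}\leq \frac{t^{1/4}}{C},
\]
which is below $g(0)$ for $t$ small. Plugging this into your martingale inequality (in place of the paper's \eqref{eq:majo1}--\eqref{eq:majo3}) would yield a complete proof strictly shorter than the paper's.
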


Assuming Lemmas~\ref{lower bound tau_0 tau_1} and~\ref{lemma prob cond} are satisfied, let us prove Proposition~\ref{prop:lower bound}.
\begin{proof}[Proof of Proposition~\ref{prop:lower bound}] Let $t_0>0$ be as defined in Lemma~\ref{lemma prob cond} and let $t\in(0,t_0)$. 
Let $q\in(0,1)$ and $p\in\mathbb{R}$ such that $p\in[-3q^{1/3}/t,3(1-q)^{1/3}/t]$. Notice that for such $(q,p)$, the couple $(1-q,-p)$ also satisfies the hypothesis of Lemma~\ref{lower bound tau_0 tau_1}. As a result, applying Lemma~\ref{lower bound tau_0 tau_1} along with Remark~\ref{invariant proba} one obtains that
$$\mathbb{P}_{(q,p)}(\widehat{\tau}_1>t)\geq c'_{t}h(1-q,-p).$$
Therefore, it follows from Lemma~\ref{lemma prob cond} that for all $q\in(0,1)$ and $p\in[-3q^{1/3}/t,3(1-q)^{1/3}/t]$,
\begin{equation}\label{eq:apply lemmata}
    \mathbb{P}_{(q,p)}(\widehat{\tau}_\partial>t)\geq\alpha_{t}c'_t(h(q,p)\wedge h(1-q,-p))=\alpha_{t}c'_tH(q,p).
\end{equation}
 
Thus, it remains to prove~\eqref{eq:apply lemmata} for $q\in(0,1)$ and $p/q^{1/3}<-3/t$ or $p/(1-q)^{1/3}>3/t$. We start with the case $p/q^{1/3}<-3/t$. 

Let $q\in(0,1)$ and $p/q^{1/3}<-3/t$. Let 
$$\widehat{\pi}_{t}:=\inf\{s>0:\widehat{p}_s/\widehat{q}_s^{1/3}\geq-3/t \}.$$ Necessarily, $\widehat{\tau}_0\wedge\widehat{\pi}_{t}\leq t/2$ almost-surely. Otherwise, integrating $\widehat{p}_s/\widehat{q}_s^{1/3}=(\mathrm{d}\widehat{q}_s/\mathrm{d}s)/\widehat{q}_s^{1/3}$ for $s\in[0,t/2]$, one obtains that $\widehat{q}_{t/2}\leq0$ contradicting that $\widehat{\tau}_0>t/2$. Besides, the continuity of the trajectories ensures that $\widehat{\tau}_1>\widehat{\pi}_t$ almost-surely. Therefore, applying the strong Markov property at the stopping time $\widehat{\pi}_t$,
\begin{align*}
    \mathbb{P}_{(q,p)}(\widehat{\tau}_\partial>t)&=\mathbb{E}_{(q,p)}\left[\mathbb{1}_{\widehat{\tau}_\partial>\widehat{\pi}_t}\mathbb{P}_{(\widehat{q}_{\widehat{\pi}_t},\widehat{p}_{\widehat{\pi}_t})}\left(\widehat{\tau}_\partial>t-s\right)\vert_{s=\widehat{\pi}_t}\right]\\ 
    &\geq\mathbb{E}_{(q,p)}\left[\mathbb{1}_{\widehat{\tau}_0>\widehat{\pi}_t}\mathbb{P}_{(\widehat{q}_{\widehat{\pi}_t},\widehat{p}_{\widehat{\pi}_t})}\left(\widehat{\tau}_\partial>t\right)\right].\numberthis\label{eq5}
\end{align*} 
On the event $\{\widehat{\tau}_0>\widehat{\pi}_t\}$, $\widehat{q}_{\widehat{\pi}_t}\in(0,1)$ and $\widehat{p}_{\widehat{\pi}_t}/\widehat{q}_{\widehat{\pi}_t}^{1/3}=-3/t$. Therefore, applying~\eqref{eq:apply lemmata} to $\mathbb{P}_{(\widehat{q}_{\widehat{\pi}_t},\widehat{p}_{\widehat{\pi}_t})}\left(\widehat{\tau}_\partial>t\right)$ in the expectation above, one has
\begin{equation}\label{utilisation lemma compact}
    \mathbb{P}_{(\widehat{q}_{\widehat{\pi}_t},\widehat{p}_{\widehat{\pi}_t})}\left(\widehat{\tau}_\partial>t\right)\geq\alpha_{t}c'_t(h(\widehat{q}_{\widehat{\pi}_t},\widehat{p}_{\widehat{\pi}_t})\wedge h(1-\widehat{q}_{\widehat{\pi}_t},-\widehat{p}_{\widehat{\pi}_t})). 
\end{equation}
By definition of $h$ and $\widehat{\pi}_t$,
\begin{equation}\label{eq12}
    h(\widehat{q}_{\widehat{\pi}_t},\widehat{p}_{\widehat{\pi}_t})=\widehat{q}_{\widehat{\pi}_t}^{1/6}g\left(\frac{\widehat{p}_{\widehat{\pi}_t}}{\widehat{q}_{\widehat{\pi}_t}^{1/3}}\right)=\widehat{q}_{\widehat{\pi}_t}^{1/6}g\left(-\frac{3}{t}\right).
\end{equation}
Besides, since $g$ is positive and satisfies the asymptotics in Remark~\ref{asymptotics g}, there exists $\beta>0$ such that for all $z\geq0$, $g(z)\geq\beta\sqrt{z}$. As a result,
\begin{align*}
    h(1-\widehat{q}_{\widehat{\pi}_t},-\widehat{p}_{\widehat{\pi}_t})&=(1-\widehat{q}_{\widehat{\pi}_t})^{1/6}g\left(-\frac{\widehat{p}_{\widehat{\pi}_t}}{(1-\widehat{q}_{\widehat{\pi}_t})^{1/3}}\right)\\
    &=(1-\widehat{q}_{\widehat{\pi}_t})^{1/6}g\left(\frac{3}{t}\left(\frac{\widehat{q}_{\widehat{\pi}_t}}{1-\widehat{q}_{\widehat{\pi}_t}}\right)^{1/3}\right)\\
    &\geq\beta\sqrt{\frac{3}{t}}\widehat{q}_{\widehat{\pi}_t}^{1/6}=\frac{\beta}{g(-3/t)}\sqrt{\frac{3}{t}}h(\widehat{q}_{\widehat{\pi}_t},\widehat{p}_{\widehat{\pi}_t}).\numberthis\label{eq13}
\end{align*}
Hence, reinjecting~\eqref{eq12} and~\eqref{eq13} into~\eqref{utilisation lemma compact} one has the existence of a constant $\gamma_t>0$ such that, on the event $\{\widehat{\tau}_0>\widehat{\pi}_t\}$,
$$\mathbb{P}_{(\widehat{q}_{\widehat{\pi}_t},\widehat{p}_{\widehat{\pi}_t})}\left(\tau_\partial>t\right)\geq\gamma_th(\widehat{q}_{\widehat{\pi}_t},\widehat{p}_{\widehat{\pi}_t}).$$
Therefore, reinjecting into~\eqref{eq5} it follows that
\begin{align*}
    \mathbb{P}_{(q,p)}(\widehat{\tau}_\partial>t)&\geq\gamma_t\mathbb{E}_{(q,p)}\left[\mathbb{1}_{\widehat{\tau}_0>\widehat{\pi}_t}h(\widehat{q}_{\widehat{\pi}_t},\widehat{p}_{\widehat{\pi}_t})\right]\\ 
    &=\gamma_t\mathbb{E}_{(q,p)}\left[h(\widehat{q}_{\widehat{\pi}_t\wedge\widehat{\tau}_0},\widehat{p}_{\widehat{\pi}_t\wedge\widehat{\tau}_0})\right]=\gamma_th(q,p) 
\end{align*} 
using Lemma~\ref{lemma martingale} and Doob's optional sampling theorem since $\widehat{\pi}_t\wedge\widehat{\tau}_0\leq t/2$ almost-surely and since $h$ vanishes continuously on the set $\{(0,p):p\leq0\}$, which follows from the asymptotics in Remark~\ref{asymptotics g}.

Now let us take $q\in(0,1)$ and $p/(1-q)^{1/3}>3/t$. In this case, since $(1-q,-p)$ satisfy the hypothesis of the previous case, we immediately have that $\mathbb{P}_{(1-q,-p)}(\widehat{\tau}_\partial>t)\geq\gamma_th(1-q,-p).$
Using Remark~\ref{invariant proba}, we deduce that for $q\in(0,1)$ and $p/(1-q)^{1/3}>3/t$
$$\mathbb{P}_{(q,p)}(\widehat{\tau}_\partial>t)\geq\gamma_th(1-q,-p).$$
Hence the proof for all $q\in(0,1)$ and $p\in\mathbb{R}$.
\end{proof}   
Let us now prove Lemma~\ref{lower bound tau_0 tau_1}.
\begin{proof}[Proof of Lemma~\ref{lower bound tau_0 tau_1}]
Let $t>0$. Let us take sequences $(q_n)_{n\geq0}$, $(p_n)_{n\geq0}$ such that for all $n\geq0$, $q_n\in(0,1)$ and $p_n\in[-3q_n^{1/3}/t,3(1-q_n)^{1/3}/t]$ and prove that $\liminf_{n\rightarrow\infty}\mathbb{P}_{(q_n,p_n)}(\widehat{\tau}_0>t)/h(q_n,p_n)>0$. 

Both sequences $(q_n)_{n\geq0}$ and $(p_n)_{n\geq0}$ are bounded. Therefore, up to extracting appropriate subsequences, we can assume that they both converge. For $n\geq0$, using Remark~\ref{rmk scale change},
\begin{equation}\label{scaling eq1}
    \mathbb{P}_{(q_n,p_n)}(\widehat{\tau}_0>t)=\mathbb{P}_{(1,p_n/q_n^{1/3})}(\widehat{\tau}_0>t/q_n^{2/3}).
\end{equation}
First assume that $\limsup_{n\rightarrow\infty}p_n/q_n^{1/3}<\infty$, then the sequence $(p_n/q_n^{1/3})_{n\geq0}$ is bounded from above by a constant $M>0$. Therefore, since for all $n\geq0$,  $-3/t\leq p_n/q_n^{1/3}\leq M$,
$$\frac{\mathbb{P}_{(q_n,p_n)}(\widehat{\tau}_0>t)}{h(q_n,p_n)}\geq \frac{\mathbb{P}_{(1,-3/t)}(\widehat{\tau}_0>t/q_n^{2/3})}{q_n^{1/6}g(M)}.$$ 
If $q_n\underset{n\rightarrow\infty}{\longrightarrow}q_\infty>0$, then the term in the right-hand side of the inequality above is clearly bounded from below by a positive time-dependent constant. If $q_n\underset{n\rightarrow\infty}{\longrightarrow}0$, using the long-time asymptotics in~\eqref{long time asymptotics} from~\cite{isozaki1994,GP} one has the existence of a constant $\alpha_t>0$ such that
$$\frac{\mathbb{P}_{(1,-3/t)}(\widehat{\tau}_0>t/q_n^{2/3})}{q_n^{1/6}}\underset{n\rightarrow\infty}{\sim}\frac{\alpha_tq_n^{1/6}}{q_n^{1/6}}=\alpha_t>0.$$ 
Assume now that 
$\limsup_{n\rightarrow\infty}p_n/q_n^{1/3}=\infty$, up to taking a subsequence we can assume that for all $n\geq0$, $p_n>0$ and that $p_n/q_n^{1/3}\underset{n\rightarrow\infty}{\longrightarrow}\infty.$

First, consider the case $p_n\underset{n\rightarrow\infty}{\longrightarrow}p_\infty>0$. Then up to taking a subsequence we can assume that $p_n\geq p_\infty/2$ for all $n\geq0$. Besides, $\mathbb{P}_{(q,p)}$ almost-surely for all $s\geq0$,
$$\widehat{q}_s=q+\int_0^s(B_r+p)\mathrm{d}r,$$
one has that 
$$\mathbb{P}_{(q_n,p_n)}(\widehat{\tau}_0>t)\geq\mathbb{P}\bigg(\inf_{s\in[0,t]}B_s\geq-p_n\bigg)\geq\mathbb{P}\bigg(\inf_{s\in[0,t]}B_s\geq-p_\infty/2\bigg).$$
Therefore, by the asymptotics of $g$ in Remark~\ref{asymptotics g},
 \begin{align*}
     \frac{\mathbb{P}_{(q_n,p_n)}(\widehat{\tau}_0>t)}{h(q_n,p_n)}&\geq\frac{\mathbb{P}\big(\inf_{s\in[0,t]}B_s\geq-p_\infty/2\big)}{q_n^{1/6}g(p_n/q_n^{1/3})}\\
     &\underset{n\rightarrow\infty}{\sim}\frac{\mathbb{P}\big(\inf_{s\in[0,t]}B_s\geq-p_\infty/2\big)}{\sqrt{p_\infty}}>0.
 \end{align*} 
Second, consider the case $\limsup_{n\rightarrow\infty}p_n/q_n^{1/3}=\infty$ and $p_n\underset{n\rightarrow\infty}{\longrightarrow}0$. Again we consider a subsequence such that $p_n>0$ for all $n$. Let $\widehat{\pi}_0:=\inf\{s>0:\widehat{p}_s=0\}$. Then, by the strong Markov property at $\widehat{\pi}_0$ and using the first step of the proof, there exists a constant $c_t>0$ such that
\begin{align*}
\mathbb{P}_{(q_n,p_n)}(\widehat{\tau}_0>t)&\geq\mathbb{P}_{(q_n,p_n)}(\widehat{\tau}_0>t,\widehat{\pi}_0\leq t\land\widehat{\tau}_1)\\ 
&=\mathbb{E}_{(q_n,p_n)}\left[\mathbb{1}_{\widehat{\pi}_0\leq t\land\widehat{\tau}_1}\mathbb{P}_{(\widehat{q}_{\widehat{\pi}_0},\widehat{p}_{\widehat{\pi}_0})}(\widehat{\tau}_0>t-s)\vert_{s=\widehat{\pi}_0}\right]\\
&\geq\mathbb{E}_{(q_n,p_n)}\left[\mathbb{1}_{\widehat{\pi}_0\leq t\land\widehat{\tau}_1}\mathbb{P}_{(\widehat{q}_{\widehat{\pi}_0},\widehat{p}_{\widehat{\pi}_0})}(\widehat{\tau}_0>t)\right]\\
&\geq c_t\mathbb{E}_{(q_n,p_n)}\left[\mathbb{1}_{\widehat{\pi}_0\leq t\land\widehat{\tau}_1}h(\widehat{q}_{\widehat{\pi}_0},\widehat{p}_{\widehat{\pi}_0})\right]\\
&=c_th(q_n,p_n)-c_t\mathbb{E}_{(q_n,p_n)}\left[\mathbb{1}_{\widehat{\pi}_0>t\land\widehat{\tau}_1}h(\widehat{q}_{t\land\widehat{\tau}_1},\widehat{p}_{t\land\widehat{\tau}_1})\right].\numberthis\label{eq:lb1}
\end{align*}
by Lemma~\ref{lemma martingale}. Furthermore, by the Hölder inequality with coefficients $4/3$ and $4$ and considering the asymptotics of $h$ in Remark~\ref{asymptotics g} one has that 
\begin{align*} \mathbb{E}_{(q_n,p_n)}\left[\mathbb{1}_{\widehat{\pi}_0>t\land\widehat{\tau}_1}h(\widehat{q}_{t\land\widehat{\tau}_1},\widehat{p}_{t\land\widehat{\tau}_1})\right]&\leq\mathbb{P}_{(q_n,p_n)}(\widehat{\pi}_0>t\land\widehat{\tau}_1)^{3/4}\mathbb{E}_{(q_n,p_n)}\left[h(\widehat{q}_{t\land\widehat{\tau}_1},\widehat{p}_{t\land\widehat{\tau}_1})^4\right]^{1/4}\\
    &\leq\mathbb{P}_{(q_n,p_n)}(\widehat{\pi}_0>t\land\widehat{\tau}_1)^{3/4}\mathbb{E}_{(q_n,p_n)}\left[\sup_{s\in[0,t]}C(1+|\widehat{p}_s|^2)\right]^{1/4}\\
    &\leq C^{1/4}\mathbb{P}_{(q_n,p_n)}(\widehat{\pi}_0>t\land\widehat{\tau}_1)^{3/4}\mathbb{E}\left[1+p_n^2+2p_n\sup_{s\in[0,t]}|B_s|+\sup_{s\in[0,t]}|B_s|^2\right]^{1/4}\\
    &\leq C^{1/4}\mathbb{P}_{(q_n,p_n)}(\widehat{\pi}_0>t\land\widehat{\tau}_1)^{3/4}\left(1+p^2_n+\frac{2\sqrt{2t}}{\sqrt{\pi}}p_n+2t\frac{\sqrt{2}}{\sqrt{\pi}}\right)^{1/4}\numberthis\label{eq:lb2}, 
\end{align*}
Furthermore,
$$\mathbb{P}_{(q_n,p_n)}(\widehat{\pi}_0>t\land\widehat{\tau}_1)\leq\mathbb{P}_{(q_n,p_n)}(\widehat{\pi}_0>t)+\mathbb{P}_{(q_n,p_n)}(\widehat{\pi}_0>\widehat{\tau}_1).$$
Since $p_n>0$ for all $n\geq0$,
\begin{align*}
    \mathbb{P}_{(q_n,p_n)}(\widehat{\pi}_0>t)&=\mathbb{P}\big(\inf_{s\in[0,t]}B_s>-p_n\big)\\
    &=\mathbb{P}\big(\vert B_t\vert< p_n\big)\\
    &=\mathbb{P}\big(\vert B_1\vert< p_n/\sqrt{t}\big)\\
    &\leq \frac{\sqrt{2}}{\sqrt{\pi t}}p_n.
\end{align*}
Now notice that
$$\mathbb{P}_{(q_n,p_n)}(\widehat{\pi}_0>\widehat{\tau}_1)=\mathbb{P}_{(q_n,p_n)}(\widehat{q}_{\widehat{\pi}_0}>1).$$
In addition, the explicit expression of the law of $\widehat{q}_{\widehat{\pi}_0}$ is given in Lachal's work in~\cite[Equation 6]{lachal1}. Therefore, if we take $N\geq1$ large enough such that for all $n\geq N$, $q_n\in(0,1/2)$ then 
\begin{align*}
    \mathbb{P}_{(q_n,p_n)}(\widehat{\pi}_0>\widehat{\tau}_1)&=\mathbb{P}_{(q_n,p_n)}(\widehat{q}_{\widehat{\pi}_0}>1)\\
    &=\int_{1}^\infty\frac{\Gamma(2/3)}{\pi 2^{2/3}3^{1/6}}\frac{p_n}{\vert z-q_n\vert^{4/3}}\mathrm{e}^{-2p_n^3/9\vert z-q_n\vert}\mathrm{d}z\nonumber\\
    &=\frac{\Gamma(2/3)}{\pi 2^{2/3}3^{1/6}}\int_0^{1/(1-q_n)}p_n u^{-2/3}\mathrm{e}^{-2up_n^3/9}\mathrm{d}u\nonumber\\
    &\leq \frac{\Gamma(2/3)}{\pi 2^{2/3}3^{1/6}} p_n\int_0^{2}  u^{-2/3}\mathrm{d}u.
\end{align*}
Consequently, there exists a constant $C'_t>0$ such that for all $n\geq N$,
\begin{equation}\label{eq:control of pi_0 probability}
    \mathbb{P}_{(q_n,p_n)}(\widehat{\pi}_0>t\land\widehat{\tau}_1)\leq C'_tp_n.
\end{equation}
It follows then from~\eqref{eq:lb1},~\eqref{eq:lb2} and~\eqref{eq:control of pi_0 probability} that
\begin{equation}\label{eq:lb3}
    \frac{\mathbb{P}_{(q_n,p_n)}(\widehat{\tau}_0>t)}{h(q_n,p_n)}\geq c_t-c_tC^{1/4}(C'_t)^{3/4}\frac{\left(1+p^2_n+\frac{2\sqrt{2t}}{\sqrt{\pi}}p_n+2t\frac{\sqrt{2}}{\sqrt{\pi}}\right)^{1/4}p_n^{3/4}}{h(q_n,p_n)}.
\end{equation}
Moreover,
$$\frac{\left(1+p^2_n+\frac{2\sqrt{2t}}{\sqrt{\pi}}p_n+2t\frac{\sqrt{2}}{\sqrt{\pi}}\right)^{1/4}p_n^{3/4}}{h(q_n,p_n)}\underset{n\rightarrow\infty}{\sim}\left(1+2t\frac{\sqrt{2}}{\sqrt{\pi}}\right)^{1/4}\frac{p_n^{3/4}}{\sqrt{p_n}}\underset{n\rightarrow\infty}{\longrightarrow}0.$$
Taking the limit in~\eqref{eq:lb3} concludes the proof of Lemma~\ref{lower bound tau_0 tau_1}.
\end{proof}

Let us now prove Lemma~\ref{lemma prob cond}.
\begin{proof}[Proof of Lemma~\ref{lemma prob cond}]
Let $t>0$. Since $\mathbb{P}_{(q,p)}(\widehat{\tau}_0>t)$ continuously vanishes on the set $\{(0,p):p\leq0\}$ and $\mathbb{P}_{(q,p)}(\widehat{\tau}_1>t)$ continuously vanishes on the set $\{(1,p):p\geq0\}$, it is enough to prove that for any sequences $(q_n)_{n\geq0}$, $(p_n)_{n\geq0}$ such that $q_n\underset{n\rightarrow\infty}{\longrightarrow}0$ and  $\limsup_{n\rightarrow\infty}p_n\leq0$ which satisfy
$$\forall n\geq0,\qquad q_n\in(0,1),\qquad p_n\in[-3q_n^{1/3}/t,3(1-q_n)^{1/3}/t],$$
we have
$$\liminf_{n\rightarrow\infty}\frac{\mathbb{P}_{(q_n,p_n)}(\widehat{\tau}_\partial>t)}{\mathbb{P}_{(q_n,p_n)}(\widehat{\tau}_0>t)}>0\quad\text{and}\quad\liminf_{n\rightarrow\infty}\frac{\mathbb{P}_{(1-q_n,-p_n)}(\widehat{\tau}_\partial>t)}{\mathbb{P}_{(1-q_n,-p_n)}(\widehat{\tau}_1>t)}>0.$$
Using the invariance in Remark~\ref{invariant proba} this is equivalent to the proof of the first liminf. Moreover, for $n\geq0$,
\begin{align*}
    \mathbb{P}_{(q_n,p_n)}(\widehat{\tau}_\partial>t)&=\frac{\mathbb{P}_{(q_n,p_n)}(\widehat{\tau}_\partial>t)}{\mathbb{P}_{(q_n,p_n)}(\widehat{\tau}_0>t)}\mathbb{P}_{(q_n,p_n)}(\widehat{\tau}_0>t)\\
&=\left(1-\frac{\mathbb{P}_{(q_n,p_n)}(\widehat{\tau}_0>t,\widehat{\tau}_1\leq t)}{\mathbb{P}_{(q_n,p_n)}(\widehat{\tau}_0>t)}\right)\mathbb{P}_{(q_n,p_n)}(\widehat{\tau}_0>t).
\end{align*}  
Our aim here is to obtain appropriate upper-bounds of $\frac{\mathbb{P}_{(q_n,p_n)}(\widehat{\tau}_0>t,\widehat{\tau}_1\leq t)}{\mathbb{P}_{(q_n,p_n)}(\widehat{\tau}_0>t)}$ in order to obtain a  positive lower-bound for $\mathbb{P}_{(q_n,p_n)}(\widehat{\tau}_\partial>t)/\mathbb{P}_{(q_n,p_n)}(\widehat{\tau}_0>t)$.

Let $\widehat{\pi}_0:=\inf\{s\geq0:\widehat{p}_s=0\}$. If $q\in(0,1)$ and $p\leq0$, the continuity of the velocity ensures that $\widehat{\pi}_0<\widehat{\tau}_1$ $\mathbb{P}_{(q,p)}$ almost-surely. Therefore, if $p_n\leq0$, applying the strong Markov property at $\widehat{\pi}_0$ one obtains that
\begin{align*}
    \mathbb{P}_{(q_n,p_n)}(\widehat{\tau}_0>t,\widehat{\tau}_1\leq t)&\leq\mathbb{E}_{(q_n,p_n)}\left[\mathbb{1}_{\widehat{\tau}_0>\widehat{\pi}_0}\mathbb{P}_{(\widehat{q}_{\widehat{\pi}_0},\widehat{p}_{\widehat{\pi}_0})}(\widehat{\tau}_0>t-s,\widehat{\tau}_1\leq t-s)\vert_{s=\widehat{\pi}_0}\right]\\
    &\leq\mathbb{E}_{(q_n,p_n)}\left[\mathbb{1}_{\widehat{\tau}_0>\widehat{\pi}_0}\mathbb{P}_{(\widehat{q}_{\widehat{\pi}_0},0)}(\widehat{\tau}_0>\widehat{\tau}_1)\right].\numberthis\label{Markov sigma_0} 
\end{align*}  
Besides, thanks to the work of Lachal in~\cite[Equation 4]{lachal2} one has an exact expression of $\mathbb{P}_{(q,0)}(\widehat{\tau}_0>\widehat{\tau}_1)$ for $q\in(0,1)$ where $F,\beta$ are respectively the hypergeometric and the beta function,
\begin{align*}
\mathbb{P}_{(q,0)}(\widehat{\tau}_0>\widehat{\tau}_1)&=6\frac{\Gamma(1/3)}{\Gamma(1/6)^2}q^{1/6}F(1/6,5/6,7/6,q)\\    
&=6\frac{\Gamma(1/3)}{\Gamma(1/6)^2}q^{1/6}\frac{\int_0^1x^{-1/6}(1-x)^{-2/3}(1-qx)^{-1/6}\mathrm{d}x}{\beta(5/6,1/3)}\\
&\leq 6\frac{\Gamma(1/3)}{\Gamma(1/6)^2}q^{1/6}\frac{\beta(5/6,1/6)}{\beta(5/6,1/3)}\\
&=q^{1/6},
\end{align*}
using the fact that $\beta(a,b)=\Gamma(a)\Gamma(b)/\Gamma(a+b)$ along with $\Gamma(a+1)=a\Gamma(a)$. As a result, reinjecting into~\eqref{Markov sigma_0}, one has 
\begin{align*}
    \mathbb{P}_{(q_n,p_n)}(\widehat{\tau}_0>t,\widehat{\tau}_1\leq t)&\leq\mathbb{E}_{(q_n,p_n)}\left[\mathbb{1}_{\widehat{\tau}_0>\widehat{\pi}_0}q_{\widehat{\pi}_0}^{1/6}\right]\\
    &=\frac{1}{g(0)}\mathbb{E}_{(q_n,p_n)}\bigg[\mathbb{1}_{\widehat{\tau}_0>\widehat{\pi}_0}\underbrace{\widehat{q}_{\widehat{\pi}_0}^{1/6}g(0)}_{=h(\widehat{q}_{\widehat{\pi}_0},\widehat{p}_{\widehat{\pi}_0})}\bigg]\\
    &=\frac{1}{g(0)}\mathbb{E}_{(q_n,p_n)}\bigg[h(\widehat{q}_{\widehat{\pi}_0\wedge\widehat{\tau}_0},\widehat{p}_{\widehat{\pi}_0\wedge\widehat{\tau}_0})\bigg] 
\end{align*}
since $h$ vanishes on the set $\{(0,p):p\leq0\}$. Besides, since $p_n\leq0$, $\mathbb{P}_{(q_n,p_n)}$ almost-surely,
$$h(\widehat{q}_{\widehat{\pi}_0\wedge\widehat{\tau}_0},\widehat{p}_{\widehat{\pi}_0\wedge\widehat{\tau}_0})\leq q_{\widehat{\pi}_0\wedge\widehat{\tau}_0}^{1/6}g(0)\leq g(0).$$
Therefore, using Lemma~\ref{lemma martingale} it follows from Doob's optional sampling theorem that $\mathbb{E}_{(q_n,p_n)}\bigg[h(\widehat{q}_{\widehat{\pi}_0\wedge\widehat{\tau}_0},\widehat{p}_{\widehat{\pi}_0\wedge\widehat{\tau}_0})\bigg]=h(q_n,p_n)$. As a result,
\begin{equation}\label{eq:majo1}
\mathbb{P}_{(q_n,p_n)}(\widehat{\tau}_0>t,\widehat{\tau}_1\leq t)\leq\frac{h(q_n,p_n)}{g(0)}.
\end{equation}  
However, in the case $p_n>0$, one cannot ensure that $\widehat{\pi}_0<\widehat{\tau}_1$ almost-surely. Therefore,
\begin{align*}
    &\mathbb{P}_{(q_n,p_n)}(\widehat{\tau}_0>t,\widehat{\tau}_1\leq t)\\
    &\leq\mathbb{E}_{(q_n,p_n)}\left[\mathbb{1}_{\widehat{\tau}_0\wedge\widehat{\tau}_1>\widehat{\pi}_0}\mathbb{1}_{\widehat{\pi}_0\leq t}\mathbb{P}_{(\widehat{q}_{\widehat{\pi}_0},0)}(\widehat{\tau}_0>t-s,\widehat{\tau}_1\leq t-s)\vert_{s=\widehat{\pi}_0}\right]+\mathbb{P}_{(q_n,p_n)}(\widehat{\tau}_0>t,\widehat{\tau}_1\leq t,\widehat{\pi}_0\geq\widehat{\tau}_1)\\
    &\leq \frac{1}{g(0)}\mathbb{E}_{(q_n,p_n)}\bigg[h(\widehat{q}_{\widehat{\pi}_0\wedge\widehat{\tau}_0\wedge t},\widehat{p}_{\widehat{\pi}_0\wedge\widehat{\tau}_0\wedge t})\bigg]+\mathbb{P}_{(q_n,p_n)}(\widehat{\pi}_0\geq\widehat{\tau}_1)\\
    &\leq\frac{h(q_n,p_n)}{g(0)}+\mathbb{P}_{(q_n,p_n)}(\widehat{\pi}_0\geq\widehat{\tau}_1).\numberthis\label{eq:majo2}
\end{align*}  
 Furthermore, one has that
\begin{equation}\label{eq:majo5}
    \mathbb{P}_{(q_n,p_n)}(\widehat{\pi}_0\geq\widehat{\tau}_1)\leq\mathbb{P}_{(q_n,p_n)}(\widehat{q}_{\widehat{\pi}_0}>1).
\end{equation}
 Since $q_n\underset{n\rightarrow\infty}{\longrightarrow}0$, up to taking a subsequence, we can assume that $q_n\in(0,1/2)$ for $n\geq0$. In addition, reinjecting the explicit expression of the law of $\widehat{q}_{\widehat{\pi}_0}$, which follows from Lachal's work in~\cite[Equation 6]{lachal1}, one has, as in the proof of Lemma~\ref{lower bound tau_0 tau_1}, that
 \begin{equation}\label{eq:majo3}
\mathbb{P}_{(q_n,p_n)}(\widehat{q}_{\widehat{\pi}_0}>1)\leq\frac{\Gamma(2/3)}{\pi 2^{2/3}3^{1/6}} p_n\int_0^{2}  u^{-2/3}\mathrm{d}u.
 \end{equation}  
Let us now compute an upper-bound for $\limsup_{n\rightarrow\infty}\frac{\mathbb{P}_{(q_n,p_n)}(\widehat{\tau}_0>t,\widehat{\tau}_1\leq t)}{\mathbb{P}_{(q_n,p_n)}(\widehat{\tau}_0>t)}$. 

Assuming there is some $N\geq0$ such that for all $n\geq N$, $p_n\leq0$. Then using~\eqref{eq:majo1} it follows that for $n\geq N$, 
\begin{align*}
    \frac{\mathbb{P}_{(q_n,p_n)}(\widehat{\tau}_0>t,\widehat{\tau}_1\leq t)}{\mathbb{P}_{(q_n,p_n)}(\widehat{\tau}_0>t)}&\leq\frac{1}{g(0)}\frac{h(q_n,p_n)}{\mathbb{P}_{(q_n,p_n)}(\widehat{\tau}_0>t)}\\
    &=\frac{1}{g(0)}\frac{q_n^{1/6}g(p_n/q_n^{1/3})}{\mathbb{P}_{(1,p_n/q_n^{1/3})}(\widehat{\tau}_0>t/q_n^{2/3})}.
\end{align*}
Since $p_n/q_n^{1/3}\in[-3/t,0]$ for all $n\geq N$, up to taking an appropriate subsequence, one can assume that the sequence $(p_n/q_n^{1/3})_{n\geq N}$ converges to a limit $l\leq0$. 

Let $\epsilon>0$. Let $N'\geq1$ such that for all $n\geq N'$, $p_n/q_n^{1/3}\geq l-\epsilon$. Then for all $n\geq N'\lor N$, 
\begin{align*}
    \frac{\mathbb{P}_{(q_n,p_n)}(\widehat{\tau}_0>t,\widehat{\tau}_1\leq t)}{\mathbb{P}_{(q_n,p_n)}(\widehat{\tau}_0>t)}&\leq\frac{1}{g(0)}\frac{q_n^{1/6}g(p_n/q_n^{1/3})}{\mathbb{P}_{(1,l-\epsilon)}(\widehat{\tau}_0>t/q_n^{2/3})}. 
\end{align*}
It follows from~\eqref{long time asymptotics} (see~\cite[Lemma 2.1]{GP}) that  $\mathbb{P}_{(q,p)}(\widehat{\tau}_0>t)\underset{t\rightarrow\infty}{\sim}Ch(q,p) t^{-1/4}$ where $C$ is a universal constant. Therefore,
$$\limsup_{n\rightarrow\infty}\frac{\mathbb{P}_{(q_n,p_n)}(\widehat{\tau}_0>t,\widehat{\tau}_1\leq t)}{\mathbb{P}_{(q_n,p_n)}(\widehat{\tau}_0>t)}\leq \frac{t^{1/4}}{Cg(0)}\limsup_{n\rightarrow\infty}\frac{g(p_n/q_n^{1/3})}{g(l-\epsilon)}=\frac{t^{1/4}}{Cg(0)}\frac{g(l)}{g(l-\epsilon)}.$$
Since the term in the left hand-side of the inequality above does not depend on $\epsilon$, one can take the limit $\epsilon\rightarrow0$ thus obtaining
\begin{equation}\label{eq:majo4}
    \limsup_{n\rightarrow\infty}\frac{\mathbb{P}_{(q_n,p_n)}(\widehat{\tau}_0>t,\widehat{\tau}_1\leq t)}{\mathbb{P}_{(q_n,p_n)}(\widehat{\tau}_0>t)}\leq \frac{t^{1/4}}{Cg(0)},
\end{equation}
which can be made as small as possible for $t\in(0,t_0)$ with $t_0>0$ small enough since $Cg(0)$ is a universal constant. 

Assume now that for all $n\geq0$, there exists a $n'\geq n$ such that $p_{n'}>0$. Then, since $\limsup_{n\rightarrow\infty}p_n\leq0$ necessarily $p_n\underset{n\rightarrow\infty}{\longrightarrow}0$. Besides, up to taking an appropriate subsequence, one can assume that for all $n\geq0$, $p_n>0$. It follows from~\eqref{eq:majo2},~\eqref{eq:majo5} and~\eqref{eq:majo3} that there exist a universal constant $C'>0$ such that
$$\frac{\mathbb{P}_{(q_n,p_n)}(\widehat{\tau}_0>t,\widehat{\tau}_1\leq t)}{\mathbb{P}_{(q_n,p_n)}(\widehat{\tau}_0>t)}\leq\frac{1}{g(0)}\frac{h(q_n,p_n)}{\mathbb{P}_{(q_n,p_n)}(\widehat{\tau}_0>t)}+C'\frac{p_n}{\mathbb{P}_{(q_n,p_n)}(\widehat{\tau}_0>t)}.$$
It follows from~\eqref{eq:majo4} that
$$\limsup_{n\rightarrow\infty}\frac{\mathbb{P}_{(q_n,p_n)}(\widehat{\tau}_0>t,\widehat{\tau}_1\leq t)}{\mathbb{P}_{(q_n,p_n)}(\widehat{\tau}_0>t)}\leq\frac{t^{1/4}}{Cg(0)}+C'\limsup_{n\rightarrow\infty}\frac{p_n}{\mathbb{P}_{(q_n,p_n)}(\widehat{\tau}_0>t)}.$$
First, consider the case $\limsup_{n\rightarrow\infty}p_n/q_n^{1/3}<\infty$. Using the fact that $$\mathbb{P}_{(q_n,p_n)}(\widehat{\tau}_0>t)\geq\mathbb{P}_{(q_n,0)}(\widehat{\tau}_0>t)=\mathbb{P}_{(1,0)}(\widehat{\tau}_0>t/q_n^{2/3}),$$
we have
$$\limsup_{n\rightarrow\infty}\frac{\mathbb{P}_{(q_n,p_n)}(\widehat{\tau}_0>t,\widehat{\tau}_1\leq t)}{\mathbb{P}_{(q_n,p_n)}(\widehat{\tau}_0>t)}\leq\frac{t^{1/4}}{Cg(0)}+C'\sup_{k\geq0}\left(\frac{p_k}{q_k^{1/3}}\right)\limsup_{n\rightarrow\infty}\frac{q_n^{1/3}}{\mathbb{P}_{(1,0)}(\widehat{\tau}_0>t/q_n^{2/3})}.$$ 
In addition,
$$\frac{q_n^{1/3}}{\mathbb{P}_{(1,0)}(\widehat{\tau}_0>t/q_n^{2/3})}\underset{n\rightarrow\infty}{\sim}t^{1/4}\frac{q_n^{1/3}}{Cg(0)q_n^{1/6}}\underset{n\rightarrow\infty}{\longrightarrow}0,$$
since $q_n\underset{n\rightarrow\infty}{\longrightarrow}0$. As a result, the inequality~\eqref{eq:majo4} is still satisfied.

Consider now the case $\limsup_{n\rightarrow\infty}p_n/q_n^{1/3}=\infty$. Up to taking a subsequence we can assume that $p_n/q_n^{1/3}\underset{n\rightarrow\infty}{\longrightarrow}\infty$. It follows from Lemma~\ref{lower bound tau_0 tau_1} that there exists a constant $\alpha_t>0$ such that for all $n\geq0$, $\mathbb{P}_{(q_n,p_n)}(\widehat{\tau}_0>t)\geq\alpha_th(q_n,p_n)$. Therefore, 
\begin{align*}
    \frac{p_n}{\mathbb{P}_{(q_n,p_n)}(\widehat{\tau}_0>t)}&\leq \alpha_t^{-1}\frac{p_n}{h(q_n,p_n)}\\
    &\underset{n\rightarrow\infty}{\sim}\alpha_t^{-1}\frac{p_n}{\sqrt{p_n}}\underset{n\rightarrow\infty}{\longrightarrow}0,
\end{align*}
since $p_n\underset{n\rightarrow\infty}{\longrightarrow}0$. Again, the inequality~\eqref{eq:majo4} is still satisfied. Hence the proof of Lemma~\ref{lemma prob cond}.
\end{proof} 
 
In order to extend the lower-bound obtained in Proposition~\ref{prop:lower bound} for any time $t>0$, let us first prove the following lemma which is stated for a general Langevin process.

\begin{lemma}[Control in a compact set]\label{lem:control in a compact}
Let $d\geq1$. Let $\mathcal{O}$ be a $\mathcal{C}^2$ bounded connected open set of $\mathbb{R}^d$. Let $D=\mathcal{O}\times\mathbb{R}^d$ and let $(q_t,p_t)_{t\geq0}$ be the process in $\mathbb{R}^d\times\mathbb{R}^d$ solution to
\begin{equation*}
  \left\{
    \begin{aligned}
        &\mathrm{d}q_t=p_t \mathrm{d}t , \\ &\mathrm{d}p_t=F(q_t)\mathrm{d}t-\gamma p_t\mathrm{d}t+\sigma\mathrm{d}B_t,
    \end{aligned}
\right.  
\end{equation*}
where $F\in\mathcal{C}^\infty(\mathbb{R}^d)$, $\gamma\in\mathbb{R}$ and $\sigma>0$. Let $\tau_\partial=\inf\{t>0:q_t\notin\mathcal{O}\}.$ Assume that there exist $t_0>0$, a function $H_{\alpha,\beta,\gamma,\sigma}$ in $D$ such that for all $t\in(0,t_0)$, 
$$\mathbb{P}_{(q,p)}(\tau_\partial>t)\propto H_{\alpha,\beta,\gamma,\sigma}(q,p).$$
Then, there exists a compact set $K_0\subset D$ such that
$$\inf_{(q,p)\in D}\frac{\mathbb{P}_{(q,p)}((q_{t_0},p_{t_0})\in K_0,\tau_\partial>t_0)}{\mathbb{P}_{(q,p)}(\tau_\partial>t_0)}>0.$$ 
\end{lemma}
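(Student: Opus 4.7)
The plan is to apply the Markov property at the midpoint $t_0/2$, reducing the lemma to a uniform tightness statement for the sub-probability measures at time $t_0/2$, and then to derive the required bound from a second application of the proportion hypothesis at two nearby times.

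First, by the Markov property at $t_0/2$,
\begin{align*}
\mathbb{P}_{(q,p)}((q_{t_0}, p_{t_0}) \in K_0, \tau_\partial > t_0) &= \mathbb{E}_{(q,p)}\!\left[\mathbb{1}_{\tau_\partial > t_0/2}\,\Phi_{K_0}(q_{t_0/2}, p_{t_0/2})\right],\\
\mathbb{P}_{(q,p)}(\tau_\partial > t_0) &= \mathbb{E}_{(q,p)}\!\left[\mathbb{1}_{\tau_\partial > t_0/2}\,\Psi(q_{t_0/2}, p_{t_0/2})\right],
\end{align*}
where $\Phi_{K_0}(w) := \mathbb{P}_w((q_{t_0/2}, p_{t_0/2}) \in K_0, \tau_\partial > t_0/2)$ and $\Psi(w) := \mathbb{P}_w(\tau_\partial > t_0/2)$. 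It therefore suffices to find a compact $K_0 \subset D$ and $\alpha > 0$ such that $\Phi_{K_0}(w) \geq \alpha\,\Psi(w)$ for every $w \in D$; using the hypothesis $\Psi \propto H$, this is equivalent to
$$\mathbb{P}_w\bigl((q_{t_0/2}, p_{t_0/2}) \notin K_0,\ \tau_\partial > t_0/2\bigr) \leq (1-\alpha)\,\Psi(w), \qquad w \in D.$$

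The key technical input is a weighted estimate obtained by a second use of Markov. For $\epsilon \in (0, t_0/2)$, applying Markov at $t_0/2$ to the event $\{\tau_\partial > t_0/2 + \epsilon\}$ together with the proportion hypothesis at times $\epsilon$ and $t_0/2 + \epsilon$ yields
$$\mathbb{E}_w\!\left[\mathbb{1}_{\tau_\partial > t_0/2}\,H(q_{t_0/2}, p_{t_0/2})\right] \leq \frac{c_{t_0/2+\epsilon}}{c'_\epsilon}\,H(w),\qquad w \in D.$$
A Markov-type inequality then gives
$\mathbb{P}_w\bigl((q_{t_0/2},p_{t_0/2}) \in \{H \geq \eta\},\ \tau_\partial > t_0/2\bigr) \leq (c_{t_0/2+\epsilon}/(\eta\, c'_\epsilon))\,H(w)$
for every $\eta > 0$, which, for $\eta$ large, is an arbitrarily small fraction of $c'_{t_0/2}\, H(w) \leq \Psi(w)$.

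It remains to choose $K_0$ compact with $K_0^c \subset \{H \geq \eta\}$ for $\eta$ sufficiently large, which requires $\{H \geq \eta\}$ to be compactly contained in $D$. This is automatic when $H$ vanishes at every part of $\partial D$ and at infinity in velocity. The \textbf{main obstacle} arises when $H$ does not vanish on the entering boundary $\Gamma^-$, as is the case for the paper's explicit function $H_{\alpha,\beta,\gamma,\sigma}$: one must then handle the portion of $K_0^c$ adjacent to $\Gamma^-$ separately. There, the deterministic drift $\dot q = p$ pushes the process strictly into the interior of $\mathcal O$ over time $t_0/2$ (with Brownian fluctuations only of size $O(\sqrt{t_0})$), and a continuity/compactness argument for $\Phi_{K_0}/\Psi$ on the closure of this region in $\overline{D}$ supplies the required uniform lower bound, completing the proof.
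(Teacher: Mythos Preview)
Your approach contains a genuine error in the Markov-inequality step. From the hypothesis $\mathbb{P}_{(q,p)}(\tau_\partial>t)\propto H(q,p)$ with $\mathbb{P}_{(q,p)}(\tau_\partial>t)\le 1$, the function $H$ is automatically bounded on $D$ (indeed $H\le 1/c'_t$). Hence for $\eta>\Vert H\Vert_\infty$ the set $\{H\ge\eta\}$ is empty and your bound $\mathbb{P}_w((q_{t_0/2},p_{t_0/2})\in\{H\ge\eta\},\tau_\partial>t_0/2)\le (C/\eta)H(w)$ is vacuous. More fundamentally, even for moderate $\eta$ the inclusion $K_0^c\subset\{H\ge\eta\}$ that you require means $\{H<\eta\}\subset K_0$; but $\{H<\eta\}$ is \emph{never} relatively compact in $D$, since $H$ vanishes on $\Gamma^+\cup\Gamma^0$ and as $|p|\to\infty$. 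Your Markov inequality controls the event that $H$ is \emph{large} at time $t_0/2$, whereas escaping any compact set corresponds to $H$ being \emph{small}. The function $H$ behaves oppositely to a Lyapunov function, so this line of argument cannot yield the needed tightness. The final paragraph about $\Gamma^-$ does not repair this: the problem is not only near $\Gamma^-$ but also near $\Gamma^+$, $\Gamma^0$, and at large $|p|$, where your bound gives no information.

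The paper's proof takes a different route that avoids any compactness property of level sets of $H$. It writes, via Chapman--Kolmogorov with three steps of length $t_0/3$,
\[
\mathbb{P}_{(q,p)}((q_{t_0},p_{t_0})\notin K_0,\tau_\partial>t_0)\le \alpha_0\,\mathbb{P}_{(q,p)}(\tau_\partial>t_0/3)\iint_{D\times K_0^c}\mathrm{p}^D_{t_0/3}(z,w)\,\mathrm{d}z\,\mathrm{d}w,
\]
using the uniform (Gaussian) upper bound $\alpha_0$ on $\mathrm{p}^D_{t_0/3}$. The proportion hypothesis at times $t_0/3$ and $t_0$ bounds $\mathbb{P}_{(q,p)}(\tau_\partial>t_0/3)/\mathbb{P}_{(q,p)}(\tau_\partial>t_0)$ uniformly. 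The remaining double integral is then made small by choosing $K_0$ large, using the external analytic input that $\mathrm{p}^D_{t_0/3}\in\mathrm{L}^1(D\times D)$ (from \cite{QSD1}). This integrability is the key ingredient you are missing.
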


\begin{proof}
Let $t_0>0$ be defined as in the assumption. Let $K_{0}\subset D$ be any given compact set then for any $(q,p)\in D$,
\begin{equation}\label{decomp prob cond compact}
    \frac{\mathbb{P}_{(q,p)}((q_{t_0},p_{t_0})\in K_{0},\tau_\partial>t_0)}{\mathbb{P}_{(q,p)}(\tau_\partial>t_0)}=1-\frac{\mathbb{P}_{(q,p)}((q_{t_0},p_{t_0})\notin K_{0},\tau_\partial>t_0)}{\mathbb{P}_{(q,p)}(\tau_\partial>t_0)}.
\end{equation}
Besides, using the existence of a transition density $\mathrm{p}^D$ for the killed kernel, see~\cite[Theorem 2.20]{kFP}, along with the Chapman-Kolmogorov relation, one has that
$$\mathbb{P}_{(q,p)}((q_{t_0},p_{t_0})\notin K_{0},\tau_\partial>t_0)=\iiint_{D\times D\times K_{0}^c}\mathrm{p}^D_{t_0/3}((q,p),y)\mathrm{p}^D_{t_0/3}(y,z)\mathrm{p}^D_{t_0/3}(z,w)\mathrm{d}y\mathrm{d}z\mathrm{d}w.$$
Furthermore, $\mathrm{p}^D$ satisfies a Gaussian upper-bound, see~\cite[Theorem 2.19]{kFP} which ensures in particular that $\mathrm{p}^D_{t_0/3}$ is uniformly bounded on $D\times D$ by a constant $\alpha_{0}>0$. As a result,
\begin{align*}
    \mathbb{P}_{(q,p)}((q_{t_0},p_{t_0})\notin K_{0},\tau_\partial>t_0)&\leq\alpha_{0}\int_D\mathrm{p}^D_{t_0/3}((q,p),y)\mathrm{d}y\iint_{D\times K_0^c}\mathrm{p}^D_{t_0/3}(z,w)\mathrm{d}z\mathrm{d}w\\
    &=\alpha_{0}\mathbb{P}_{(q,p)}(\tau_\partial>t_0/3)\iint_{D\times K_0^c}\mathrm{p}^D_{t_0/3}(z,w)\mathrm{d}z\mathrm{d}w.
\end{align*}
Consequently,
\begin{equation}\label{proba conditionnelle double integ}
    \frac{\mathbb{P}_{(q,p)}((q_{t_0},p_{t_0})\notin K_{0},\tau_\partial>t_0)}{\mathbb{P}_{(q,p)}(\tau_\partial>t_0)}\leq\alpha_{0}\frac{\mathbb{P}_{(q,p)}(\tau_\partial>t_0/3)}{\mathbb{P}_{(q,p)}(\tau_\partial>t_0)}\iint_{D\times K_{0}^c}\mathrm{p}^D_{t_0/3}(z,w)\mathrm{d}z\mathrm{d}w.
\end{equation}
By assumption there exists a constant $c_{0}>0$ independent of $(q,p)\in D$ such that
$$\frac{\mathbb{P}_{(q,p)}(\tau_\partial>t_0/3)}{\mathbb{P}_{(q,p)}(\tau_\partial>t_0)}\leq c_{0}.$$
Furthermore, it was shown in~\cite[Lemma 3.1]{QSD1} that for any $t>0$, $\mathrm{p}^D_t\in\mathrm{L}^1(D\times D)$. Consequently, there exists a compact set $K_{0}\subset D$ large enough such that
$$\iint_{D\times K_{0}^c}\mathrm{p}^D_{t_0/3}(z,w)\mathrm{d}z\mathrm{d}w\leq\frac{1}{2\alpha_{0}c_{0}}.$$
The two inequalities above ensure that for $K_{0}$ defined as such, for all $(q,p)\in D$, 
$$\frac{\mathbb{P}_{(q,p)}((q_{t_0},p_{t_0})\in K_{0},\tau_\partial>t_0)}{\mathbb{P}_{(q,p)}(\tau_\partial>t_0)}\geq\frac{1}{2}.$$ 
\end{proof}
We are now able to prove Proposition~\ref{lower-bound 2}.
\begin{proposition}[Lower-bound]\label{lower-bound 2}
For all $t>0$, there exists a constant $c'_t>0$ such that
\begin{equation}\label{eq:lower bound all time}
    \forall (q,p)\in D,\qquad \mathbb{P}_{(q,p)}(\widehat{\tau}_\partial>t)\geq c'_t\,H(q,p).
\end{equation}
\end{proposition}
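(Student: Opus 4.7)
The plan is to bootstrap the small-time lower bound of Proposition~\ref{prop:lower bound} to arbitrary $t > 0$ via a Markov-property argument at time $t_0$, using Lemma~\ref{lem:control in a compact} to guarantee a uniform-in-starting-point probability of being in a fixed compact $K_0 \subset D$ at that time.

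For $t \in (0, t_0]$ the claim is exactly Proposition~\ref{prop:lower bound}, so I would restrict attention to $t > t_0$. First, I would invoke Lemma~\ref{lem:control in a compact}, whose hypothesis holds for the integrated Brownian process on $(0, t_0]$ thanks to Propositions~\ref{prop:upper-bound} and~\ref{prop:lower bound}, to obtain a compact set $K_0 \subset D$ and a constant $\beta_0 > 0$ with
$$\mathbb{P}_{(q,p)}\bigl(\widehat{\tau}_\partial > t_0,\ (\widehat{q}_{t_0},\widehat{p}_{t_0}) \in K_0\bigr) \geq \beta_0\, \mathbb{P}_{(q,p)}(\widehat{\tau}_\partial > t_0) \geq \beta_0 c'_{t_0}\, H(q,p)$$
for all $(q,p) \in D$, where the last inequality is Proposition~\ref{prop:lower bound} applied at time $t_0$. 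Next, applying the Markov property at $t_0$ and restricting the resulting expectation to the event $\{(\widehat{q}_{t_0},\widehat{p}_{t_0}) \in K_0\}$ gives
$$\mathbb{P}_{(q,p)}(\widehat{\tau}_\partial > t) \geq \Bigl(\inf_{(q',p') \in K_0}\mathbb{P}_{(q',p')}(\widehat{\tau}_\partial > t - t_0)\Bigr)\, \mathbb{P}_{(q,p)}\bigl(\widehat{\tau}_\partial > t_0,\ (\widehat{q}_{t_0},\widehat{p}_{t_0}) \in K_0\bigr).$$

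To close the argument I would verify that the infimum above, call it $\alpha_{t-t_0}$, is strictly positive. The map $(q',p') \mapsto \mathbb{P}_{(q',p')}(\widehat{\tau}_\partial > t-t_0)$ is continuous on $D$ since the killed density $\mathrm{p}^D$ is smooth on $\mathbb{R}_+^*\times D \times D$ by~\cite[Theorem 2.20]{kFP}, and it is strictly positive at every interior point (one can produce a smooth control keeping the process inside $D$ up to time $t-t_0$ and appeal to a standard support argument). Compactness of $K_0 \subset D$ then yields $\alpha_{t-t_0} > 0$, and combining the three displayed inequalities above gives the inequality~\eqref{eq:lower bound all time} with $c'_t := \alpha_{t-t_0}\beta_0 c'_{t_0}$.

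The main technical content has already been absorbed into Propositions~\ref{prop:upper-bound} and~\ref{prop:lower bound}, which capture the correct boundary behaviour through $H$, and into Lemma~\ref{lem:control in a compact}, which provides the crucial uniformity in the starting point for the compact-set return. The only point in the plan that requires more than a formal argument is the interior positivity of $\mathbb{P}_{(q',p')}(\widehat{\tau}_\partial > t-t_0)$, but this is a standard consequence of the controllability of the integrated Brownian motion together with the smoothness of the transition density; the extension from $t \leq t_0$ to arbitrary $t$ is otherwise a soft Markov-property bootstrap.
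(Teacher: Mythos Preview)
Your proposal is correct and follows essentially the same approach as the paper: both invoke Lemma~\ref{lem:control in a compact} (enabled by Propositions~\ref{prop:upper-bound} and~\ref{prop:lower bound}) and then use the Markov property together with the strict positivity of $\inf_{K_0}\mathbb{P}_{(q',p')}(\widehat{\tau}_\partial>s)$, which the paper also justifies via~\cite[Theorem~2.20]{kFP}. The only cosmetic difference is that the paper iterates in steps of $t_0$ to obtain an explicit constant $(\delta_0 c_0)^{\lfloor t/t_0\rfloor}c'_{t_0}$, whereas you take a single Markov step at $t_0$; your version is slightly more direct and equally valid for the stated conclusion.
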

 
\begin{proof}[Proof of Proposition~\ref{lower-bound 2}]
Let $t_0>0$ be as defined in Proposition~\ref{prop:lower bound}. Using Propositions~\ref{prop:upper-bound} and~\ref{prop:lower bound} we can apply Lemma~\ref{lem:control in a compact} to obtain   
\begin{equation}\label{eq:inf proba conditionnelle}
c_0:=\inf_{(q,p)\in D}\frac{\mathbb{P}_{(q,p)}((\widehat{q}_{t_0},\widehat{p}_{t_0})\in K_{0},\widehat{\tau}_\partial>t_0)}{\mathbb{P}_{(q,p)}(\widehat{\tau}_\partial>t_0)}>0.   
\end{equation}
Let us now prove~\eqref{eq:lower bound all time}. In order to do that, we start by considering, for any integer $k\geq1$,  the probability \mbox{$\mathbb{P}_{(q,p)}(\widehat{\tau}_\partial>(k+1)t_0)$}. Using the Markov property one has
\begin{align*}
    \mathbb{P}_{(q,p)}(\widehat{\tau}_\partial>(k+1)t_0)&=\mathbb{E}_{(q,p)}\left[\mathbb{1}_{\widehat{\tau}_\partial>kt_0}\mathbb{P}_{(\widehat{q}_{kt_0},\widehat{p}_{kt_0})}(\widehat{\tau}_\partial>t_0)\vert\right]\\
    &\geq\mathbb{E}_{(q,p)}\left[\mathbb{1}_{\widehat{\tau}_\partial>kt_0,(\widehat{q}_{kt_0},\widehat{p}_{kt_0})\in K_0}\mathbb{P}_{(\widehat{q}_{kt_0},\widehat{p}_{kt_0})}(\widehat{\tau}_\partial>t_0)\right].
\end{align*}
Let $\delta_0:=\inf_{z\in K_0}\mathbb{P}_z(\widehat{\tau}_\partial>t_0)$ then $\delta_0>0$ since $K_0$ is a compact subset of $D$ and such probability is continuous and positive on $D$ by~\cite[Theorem 2.20]{kFP}. Therefore,
\begin{align*}
    \mathbb{P}_{(q,p)}(\widehat{\tau}_\partial>(k+1)t_0)&\geq\delta_0\mathbb{P}_{(q,p)}((\widehat{q}_{kt_0},\widehat{p}_{kt_0})\in K_0,\widehat{\tau}_\partial>kt_0)\\
    &=\delta_0\mathbb{E}_{(q,p)}\left[\mathbb{1}_{\widehat{\tau}_\partial>(k-1)t_0}\mathbb{P}_{(\widehat{q}_{(k-1)t_0},\widehat{p}_{(k-1)t_0})}((\widehat{q}_{t_0},\widehat{p}_{t_0})\in K_0,\widehat{\tau}_\partial>t_0)\right].
\end{align*}
As a result, by~\eqref{eq:inf proba conditionnelle},
\begin{align*}
    \mathbb{P}_{(q,p)}(\widehat{\tau}_\partial>(k+1)t_0)&\geq\delta_0c_0\mathbb{E}_{(q,p)}\left[\mathbb{1}_{\widehat{\tau}_\partial>(k-1)t_0}\mathbb{P}_{(\widehat{q}_{(k-1)t_0},\widehat{p}_{(k-1)t_0})}(\widehat{\tau}_\partial>t_0)\right]\\
    &=\delta_0c_0\mathbb{P}_{(q,p)}(\widehat{\tau}_\partial>kt_0).
\end{align*}
As a result, for all $k\geq1$,
$$\mathbb{P}_{(q,p)}(\widehat{\tau}_\partial>(k+1)t_0)\geq(\delta_0c_0)^k\mathbb{P}_{(q,p)}(\widehat{\tau}_\partial>t_0)\geq(\delta_0c_0)^kc'_{t_0}H(q,p)$$
by Proposition~\ref{prop:lower bound}.

Now let us take any $t>t_0$. Then there exists an integer $k\geq1$ and $s\in[0,t_0)$ such that $t=kt_0+s$. Since $\mathbb{P}_{(q,p)}(\widehat{\tau}_\partial>t)\geq\mathbb{P}_{(q,p)}(\widehat{\tau}_\partial>(k+1)t_0)$ and $k=\lfloor t/t_0\rfloor$, one has from the inequality above that
$$\mathbb{P}_{(q,p)}(\widehat{\tau}_\partial>t)\geq(\delta_0c_0)^{\lfloor t/t_0\rfloor}c'_{t_0}H(q,p)$$
which concludes the proof of Proposition~\ref{lower-bound 2}. 
\end{proof}   

Finally, Propositions~\ref{prop:upper-bound} and~\ref{lower-bound 2} conclude the proof of Proposition~\ref{prop:two-sided proba} which yields Proposition~\ref{prop:two-sided proba final}. Let us mention that applying Proposition~\ref{prop:first exit estimates implies two sided} to the process~\eqref{eq:int brownian motion with sigma} now ensures two-sided estimates for the transition density of the killed process~\eqref{eq:int brownian motion with sigma} which we shall use in the next section. Let us also recall that the proof of Proposition~\ref{prop:first exit estimates implies two sided} is completed in Section~\ref{sec:two-sided density}.

\subsection{Langevin process}\label{sec:first exit time proba langevin}
 
In this section we shall extend the estimates obtained in Proposition~\ref{prop:two-sided proba final} to the first exit time probability of the Langevin process~\eqref{eq:Langevin}. Namely we shall here prove Proposition~\ref{prop:estimates first exit time Langevin}. Let $(q_t,p_t)_{t\geq0}$ satisfying~\eqref{eq:Langevin} and let $\tau_\partial$ be its first exit time from $D$.  
  
As mentioned previously, the estimates obtained in Proposition~\ref{prop:two-sided proba final} in the previous section yield the two-sided estimates~\eqref{two-sided p_t^D} for the process~\eqref{eq:int brownian motion with sigma} using Proposition~\ref{prop:first exit estimates implies two sided} which is proven in Section~\ref{sec:two-sided density}. 

For a fixed $\sigma>0$ and any $\eta\geq0$ let us define the following process $(q^\eta_t,p^\eta_t)_{t\geq0}$: 

\begin{equation}\label{eq:integ orstein uhlenbeck}
  \left\{
    \begin{aligned}
        &\mathrm{d}q^\eta_t=p^\eta_t \mathrm{d}t , \\
        &\mathrm{d}p^\eta_t=-4\eta p^\eta_t\mathrm{d}t-3\eta^2q^\eta_t\mathrm{d}t+\sigma\mathrm{d}B_t.
    \end{aligned}
\right.  
\end{equation}
Let $$\tau^\eta_\partial:=\inf\{t>0:q^\eta_t\notin(0,1)\}.$$

We shall first prove sharp estimates on the first exit time probability $\mathbb{P}_{(q,p)}(\tau^\eta_\partial>t)$ for the process~\eqref{eq:integ orstein uhlenbeck} in Lemma~\ref{lem:first exit time integ orstein uhlenbeck}. Such estimates are then used in Lemma~\ref{lem:girsanov exponentielle estimate} to provide an estimate on an expectation which shall appear later in the proof of Proposition~\ref{prop:estimates first exit time Langevin}. We will then conclude with the proof of Proposition~\ref{prop:estimates first exit time Langevin}.

We start this section with the proof of the following two lemmas. 

\begin{lemma}\label{lem:first exit time integ orstein uhlenbeck}
For all $t>0$,
$$\mathbb{P}_{(q,p)}(\tau^\eta_\partial>t)\propto G_{\eta,\sigma}(q,p),$$
where $G_{\eta,\sigma}$ is defined in~\eqref{eq:def G}.
\end{lemma}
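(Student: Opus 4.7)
The plan is to compare the integrated Ornstein--Uhlenbeck process $(q^\eta_t,p^\eta_t)$ to the integrated Brownian process $(\widehat{q}^\sigma_t,\widehat{p}^\sigma_t)$ of Proposition~\ref{prop:two-sided proba final} through a Girsanov transformation. I would place both processes on a common filtered probability space driven by the same Brownian motion $B$ and starting at $(q,p)$, and set
\[
\theta_s:=\frac{4\eta\widehat{p}^\sigma_s+3\eta^2\widehat{q}^\sigma_s}{\sigma},\qquad M_t:=\exp\!\Bigl(-\int_0^t\theta_s\,dB_s-\tfrac{1}{2}\int_0^t\theta_s^2\,ds\Bigr).
\]
Girsanov's theorem then ensures that under $\mathbb{Q}$ with $d\mathbb{Q}/d\mathbb{P}|_{\mathcal{F}_t}=M_t$, the pair $(\widehat{q}^\sigma,\widehat{p}^\sigma)$ has the law of $(q^\eta,p^\eta)$, which yields
\[
\mathbb{P}_{(q,p)}\bigl(\tau^\eta_\partial>t\bigr)=\mathbb{E}^{\mathbb{P}}\bigl[M_t\mathbb{1}_{\widehat{\tau}^\sigma_\partial>t}\bigr].
\]
Using $\sigma\,dB=d\widehat{p}^\sigma$ together with Itô integration by parts on $(\widehat{p}^\sigma_s)^2$ and $\widehat{q}^\sigma_s\widehat{p}^\sigma_s$, both stochastic integrals in $\log M_t$ are evaluated explicitly, yielding
\[
\log M_t=2\eta t+\frac{2\eta p^2+3\eta^2 qp+6\eta^3 q^2}{\sigma^2}-\frac{2\eta(\widehat{p}^\sigma_t)^2+3\eta^2\widehat{q}^\sigma_t\widehat{p}^\sigma_t+6\eta^3(\widehat{q}^\sigma_t)^2}{\sigma^2}-\frac{5\eta^2}{\sigma^2}\!\int_0^t\!(\widehat{p}^\sigma_s)^2\,ds-\frac{9\eta^4}{2\sigma^2}\!\int_0^t\!(\widehat{q}^\sigma_s)^2\,ds,
\]
and on $\{\widehat{\tau}^\sigma_\partial>t\}$ every $\widehat{q}^\sigma$-dependent contribution is uniformly bounded, so only the velocity terms require delicate handling.

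To recover the two candidate factors in $G_{\eta,\sigma}(q,p)$, I would treat the exit through $q=0$ and through $q=1$ separately, using the bound $\mathbb{P}(\tau^\eta_\partial>t)\le\min(\mathbb{P}(\tau^\eta_0>t),\mathbb{P}(\tau^\eta_1>t))$, where $\tau^\eta_0,\tau^\eta_1$ denote the first hitting times of $\{0\}$ and $\{1\}$. Replacing $\{\widehat{\tau}^\sigma_\partial>t\}$ by $\{\widehat{\tau}^\sigma_0>t\}$ in the Girsanov identity, combining with the half-line sharp estimate $\mathbb{P}_{(q,p)}(\widehat{\tau}^\sigma_0>t)\propto h(q,p/\sigma^{2/3})$ (which follows from Lemma~\ref{lemma martingale} and the long-time asymptotic~\eqref{long time asymptotics} as in the proof of Proposition~\ref{prop:upper-bound}), and completing the square in $\widehat{p}^\sigma_t$ reproduces the first ingredient $h(q,(p+3\eta q)/\sigma^{2/3})$; the shift by $3\eta q$ arises from the linear part of the boundary polynomial appearing in $\log M_t$. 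The symmetric argument for $\tau^\eta_1$, combined with the invariance $(q,p)\mapsto(1-q,-p)$ of Remark~\ref{invariant proba}, produces the second ingredient; the exponential prefactor $e^{-3\eta p/\sigma^2}$ originates from the cross term $-3\eta^2\widehat{q}^\sigma_t\widehat{p}^\sigma_t/\sigma^2$ when $\widehat{q}^\sigma_t$ is taken at the opposite boundary.

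The matching lower bound is produced in parallel by restricting the Girsanov expectation to a sub-event of the form $\{\widehat{\tau}^\sigma_\partial>t\}\cap\{\int_0^t(\widehat{p}^\sigma_s)^2\,ds\le Ct,\,|\widehat{p}^\sigma_t|\le R\}$ on which $\log M_t$ admits an explicit lower bound, and by using sharp Gaussian tail estimates for $\widehat{p}^\sigma$ to ensure that this sub-event still carries a positive fraction of the exit event $\{\widehat{\tau}^\sigma_\partial>t\}$. The principal obstacle is the unbounded negative contribution $-\frac{5\eta^2}{\sigma^2}\int_0^t(\widehat{p}^\sigma_s)^2\,ds$ in $\log M_t$: while it is harmless for the upper bound, producing the lower bound requires a careful conditional Gaussian analysis of $\widehat{p}^\sigma$ on the exit event, together with a square-completion that reproduces exactly the shift $p\mapsto p+3\eta q$ and the prefactor $e^{-3\eta p/\sigma^2}$ so as to match the sharp form $G_{\eta,\sigma}$ from both sides.
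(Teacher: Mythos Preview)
Your Girsanov computation of $\log M_t$ is correct, but the approach has a genuine gap that is not addressed by the sketches you give.

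The central issue is the interplay between the initial-point factor $e^{(2\eta p^2+3\eta^2 qp)/\sigma^2}$ and the negative terms $-\tfrac{2\eta}{\sigma^2}(\widehat p^\sigma_t)^2$ and $-\tfrac{5\eta^2}{\sigma^2}\int_0^t(\widehat p^\sigma_s)^2\,ds$. You call the latter ``harmless for the upper bound'' because it is nonpositive; but if you drop it (and similarly bound the endpoint quadratic by a constant on $\{\widehat\tau^\sigma_\partial>t\}$), you are left with
\[
\mathbb P_{(q,p)}(\tau^\eta_\partial>t)\le C_t\,e^{2\eta p^2/\sigma^2}\,\mathbb P_{(q,p)}(\widehat\tau^\sigma_\partial>t)\propto e^{2\eta p^2/\sigma^2}\,H\bigl(q,p/\sigma^{2/3}\bigr),
\]
which blows up in $p$ and is \emph{not} dominated by $G_{\eta,\sigma}(q,p)$. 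So the quadratic terms are not harmless: their sharp contribution is needed to cancel $e^{2\eta p^2/\sigma^2}$. Moreover, on the half-line event $\{\widehat\tau^\sigma_0>t\}$ that you propose for the first branch, $\widehat q^\sigma$ is unbounded above, so the ``bounded'' $q$-terms in $\log M_t$ are no longer bounded either. Finally, the shift $p\mapsto p+3\eta q$ does not drop out of any square-completion in your formula: the boundary polynomial $2\eta p^2+3\eta^2qp$ completes to $2\eta(p+\tfrac{3\eta q}{4})^2+\cdots$, which is the wrong shift and still carries a $p^2$.

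In fact, estimating $\mathbb E_{(q,p)}\bigl[\mathbb 1_{\widehat\tau^\sigma_\partial>t}\,\exp(-c\!\int_0^t(\widehat p^\sigma_s)^2\,ds)\bigr]$ sharply in $(q,p)$ is exactly the content of Lemma~\ref{lem:girsanov exponentielle estimate}, whose proof \emph{uses} Lemma~\ref{lem:first exit time integ orstein uhlenbeck}; so your route would be circular. The paper avoids all of this by exploiting the linearity of~\eqref{eq:integ orstein uhlenbeck}: solving the SDE explicitly and matching covariances shows that $(q^\eta_t)_{t\ge0}$ has the same law as
\[
e^{-3\eta t}\Bigl(q+(p+3\eta q)s+\sigma\!\int_0^s\!B_r\,dr\Bigr)\Big|_{s=(e^{2\eta t}-1)/(2\eta)},
\]
i.e.\ a time-changed, rescaled integrated Brownian motion with initial velocity $p+3\eta q$. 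The shift is then built into the starting point, and the remaining work is a comparison with the integrated Brownian estimates of Proposition~\ref{prop:two-sided proba final} together with a Girsanov transformation with \emph{deterministic} drift $\tfrac{3\eta}{\sigma}\sqrt{1+2\eta s}$ to straighten the moving boundary $(1+2\eta s)^{3/2}$; that Girsanov produces only a linear term in $\widehat p^\sigma_t$ (no $\int(\widehat p^\sigma_s)^2\,ds$), which is exactly where the factor $e^{-3\eta p/\sigma^2}$ comes from.
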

\begin{lemma}\label{lem:girsanov exponentielle estimate}
Let $\lambda\geq0$ and let $f:D\mapsto\mathbb{R}_+$ such that there exists $a,b>0$ satisfying for all $(q,p)\in D$,  
\begin{equation}\label{eq:condition on f}
    |f(q,p)|\leq \mathrm{e}^{a|p|^2+b}.
\end{equation}
Then, for all $t>0$, 
$$\mathbb{E}_{(q,p)}\left[\mathbb{1}_{\widehat{\tau}^\sigma_\partial>t}f(\widehat{q}^\sigma_t,\widehat{p}^\sigma_t)\mathrm{exp}\left(-\frac{\lambda^2}{\sigma^2}\int_0^{t}(\widehat{p}^\sigma_s)^2\mathrm{d}s\right)\right]\propto\mathrm{exp}\left(\frac{2\lambda p^2}{\sigma^2\sqrt{11}}+\frac{3qp\lambda^2}{11\sigma^2}\right)G_{\lambda/\sqrt{11},\sigma}(q,p),$$
where $(\widehat{q}^\sigma_t,\widehat{p}^\sigma_t)_{t\geq0}$ is defined on~\eqref{eq:int brownian motion with sigma} and $\widehat{\tau}^\sigma_\partial$ is its first exit time from $D$. 
\end{lemma}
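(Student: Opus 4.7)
The plan is to apply Girsanov's theorem with a density tailored so that the Feynman--Kac weight $\exp\bigl(-\tfrac{\lambda^2}{\sigma^2}\int_0^t(\widehat p^\sigma_s)^2\,ds\bigr)$ is absorbed into a change of measure. Set $\eta:=\lambda/\sqrt{11}$ and consider the exponential martingale
\[
L_t:=\exp\!\left(-\int_0^t\theta_s\,dB_s-\tfrac{1}{2}\int_0^t\theta_s^2\,ds\right),\qquad \theta_s:=-\frac{4\eta\widehat{p}^\sigma_s+3\eta^2\widehat{q}^\sigma_s}{\sigma},
\]
together with the tilted measure $\mathbb{Q}$ defined by $d\mathbb{Q}/d\mathbb{P}=L_t$, under which $(\widehat{q}^\sigma,\widehat{p}^\sigma)$ satisfies a linear Langevin-type SDE with velocity drift $+(4\eta p+3\eta^2 q)\,dt$. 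By the usual change-of-measure identity, the expectation of interest equals $\mathbb{E}^{\mathbb{Q}}\!\bigl[\mathbb{1}_{\widehat{\tau}^\sigma_\partial>t}\,f(\widehat{q}^\sigma_t,\widehat{p}^\sigma_t)\,\exp(\log L_t^{-1}-\tfrac{\lambda^2}{\sigma^2}\int_0^t(\widehat{p}^\sigma)^2)\bigr]$.

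The next step is an It\^o expansion of $\log L_t^{-1}=\int_0^t\theta_s\,dB_s+\tfrac{1}{2}\int_0^t\theta_s^2\,ds$. Applying It\^o's formula to $p^2$, $qp$ and $q^2$ and using $\int_0^t q_s\,dp_s=q_tp_t-qp-\int_0^t p_s^2\,ds$, one rewrites $\log L_t^{-1}$ as a linear combination of boundary values at $0$ and $t$, a linear term in $t$, and the integrals $\int_0^t p^2$ and $\int_0^t q^2$. The decisive algebraic observation is that the coefficient of $\int_0^t p_s^2\,ds$ in $\log L_t^{-1}$ equals $3\eta^2/\sigma^2+8\eta^2/\sigma^2=11\eta^2/\sigma^2$: the first summand arises from the stochastic integral $\int q\,dp$, the second from the quadratic-variation term $\tfrac{1}{2}\int\theta^2\,ds$. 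Adding the original weight $-\lambda^2/\sigma^2\int p^2\,ds$ yields coefficient $(11\eta^2-\lambda^2)/\sigma^2$, which vanishes precisely because $\eta=\lambda/\sqrt{11}$. This is the origin both of the factor $\sqrt{11}$ and of the positive sign of the $p^2$-term in the announced prefactor.

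Once the $\int p^2$ contribution has been cancelled, the remaining exponent decomposes into: (a) an initial-time factor $\exp\bigl(\tfrac{2\lambda p^2}{\sigma^2\sqrt{11}}+\tfrac{3\lambda^2 qp}{11\sigma^2}-\tfrac{6\lambda^3 q^2}{11\sqrt{11}\sigma^2}\bigr)$, whose first two terms reproduce the announced prefactor while the $q^2$-contribution is bounded uniformly for $q\in(0,1)$; (b) a terminal-time factor in $(\widehat{q}^\sigma_t,\widehat{p}^\sigma_t)$; (c) the integral $\tfrac{9\eta^4}{2\sigma^2}\int_0^t q^2\,ds$, bounded by $\tfrac{9\eta^4 t}{2\sigma^2}$ on the survival event; (d) a linear $t$-term $2\eta t$ absorbed into $t$-dependent constants. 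The hypothesis $|f(q,p)|\le e^{a|p|^2+b}$ is precisely what guarantees that the terminal factor combined with $f$ remains integrable against the tilted survival law, whose velocity follows a linear Gaussian SDE.

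To conclude, one identifies the tilted expectation $\mathbb{E}^{\mathbb{Q}}[\mathbb{1}_{\widehat{\tau}^\sigma_\partial>t}\,\tilde f(\widehat{q}^\sigma_t,\widehat{p}^\sigma_t)]$ with the survival probability from Lemma~\ref{lem:first exit time integ orstein uhlenbeck}, i.e.\ $\mathbb{P}_{(q,p)}(\tau^{\lambda/\sqrt{11}}_\partial>t)\propto G_{\lambda/\sqrt{11},\sigma}(q,p)$, possibly via a second Girsanov step or a symmetry of linear Gaussian Langevin dynamics that relates the tilted process to $(q^{\lambda/\sqrt{11}},p^{\lambda/\sqrt{11}})$. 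The main obstacle lies in the lower bound: to control the tilted expectation from below by a positive constant multiple of the survival probability, one needs a return-to-a-compact-set estimate in the spirit of Lemma~\ref{lem:control in a compact}, applied to the tilted process, ensuring that the conditional law of $(\widehat{q}^\sigma_t,\widehat{p}^\sigma_t)$ given $\{\widehat{\tau}^\sigma_\partial>t\}$ charges a fixed compact subset of $D$ with uniformly positive probability, so that the terminal factors contribute a strictly positive constant to the lower estimate.
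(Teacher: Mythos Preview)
Your key algebraic observation is correct and matches the paper exactly: with $\eta=\lambda/\sqrt{11}$, the It\^o expansion of the Girsanov exponent produces a coefficient $11\eta^2/\sigma^2$ in front of $\int_0^t(\widehat p^\sigma_s)^2\,ds$, which cancels the Feynman--Kac weight. You also correctly identify that the lower bound requires a return-to-a-compact argument in the spirit of Lemma~\ref{lem:control in a compact}. These are precisely the two ingredients the paper uses.

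There is, however, a genuine gap in your final identification step. With your choice of sign for $\theta$, the tilted process under $\mathbb{Q}$ has velocity drift $+(4\eta p+3\eta^2 q)$, whereas the process $(q^\eta,p^\eta)$ of Lemma~\ref{lem:first exit time integ orstein uhlenbeck} has drift $-(4\eta p+3\eta^2 q)$. These are genuinely different linear systems (one stable, one unstable), and the explicit time-change argument that proves Lemma~\ref{lem:first exit time integ orstein uhlenbeck} does not carry over to the unstable one. Your suggested fixes (``a second Girsanov step or a symmetry'') do not work directly: flipping the sign of $\theta$ so that the tilted process becomes $(q^\eta,p^\eta)$ changes the $\int p^2$ coefficient from $11\eta^2/\sigma^2$ to $5\eta^2/\sigma^2$, and there is no obvious pathwise symmetry relating the two processes on the domain $D$.

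The paper avoids this by running the argument in the opposite direction. It \emph{defines} $u_t(q,p)$ as an expectation under the stable process $(q^\eta,p^\eta)$, with a deliberately chosen terminal weight $\exp\bigl(-\tfrac{2\eta}{\sigma^2}(p^\eta_t)^2-\tfrac{3\eta^2}{\sigma^2}q^\eta_t p^\eta_t\bigr)$, and then applies Girsanov from $(q^\eta,p^\eta)$ back to the integrated Brownian motion. The terminal weight is engineered precisely so that, after the It\^o expansion, all terminal-time terms cancel and one recovers the left-hand side of the lemma times the announced prefactor. The remaining task is then to show $u_t\propto\mathbb{P}_{(q,p)}(\tau^\eta_\partial>t)$, for which the paper uses Chapman--Kolmogorov, the uniform Gaussian bound on $\mathrm{p}^{\eta,D}$, the already-established two-sided estimates for the integrated Brownian motion (via Proposition~\ref{prop:first exit estimates implies two sided}), and Lemma~\ref{lem:control in a compact} applied to $(q^\eta,p^\eta)$. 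Since Lemma~\ref{lem:first exit time integ orstein uhlenbeck} is available for this process, the argument closes.
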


\begin{proof}[Proof of Lemma~\ref{lem:first exit time integ orstein uhlenbeck}]
Let $(q,p)\in D$. It can be seen that the solution $q^\eta_t$ of~\eqref{eq:integ orstein uhlenbeck} at any time $t\geq0$ is given by, under $\mathbb{P}_{(q,p)}$ almost-surely, 
\begin{align*}
q^\eta_t&=\frac{q}{2}(3\mathrm{e}^{-\eta t}-\mathrm{e}^{-3\eta t})+\frac{p}{2\eta}(\mathrm{e}^{-\eta t}-\mathrm{e}^{-3\eta t})+\frac{\sigma}{2\eta}\int_0^t(\mathrm{e}^{-\eta(t-s)}-\mathrm{e}^{-3\eta (t-s)})\mathrm{d}B_s\\
&=\frac{q}{2}\mathrm{e}^{-3\eta t}(3\mathrm{e}^{2\eta t}-1)+\frac{p}{2\eta}\mathrm{e}^{-3\eta t}(\mathrm{e}^{2\eta t}-1)+\frac{\sigma}{2\eta}\int_0^t(\mathrm{e}^{-\eta(t-s)}-\mathrm{e}^{-3\eta (t-s)})\mathrm{d}B_s.
\end{align*} 
In addition, one can check the following equality in law for the below Gaussian processes:
$$\frac{1}{2\eta}\int_0^t(\mathrm{e}^{-\eta(t-s)}-\mathrm{e}^{-3\eta (t-s)})\mathrm{d}B_s\overset{d}{=}\mathrm{e}^{-3\eta t}\int_0^{(\mathrm{e}^{2\eta t}-1)/2\eta}B_s\mathrm{d}s.$$

As a result, the process $(q^\eta_t)_{t\geq0}$ shares the same law as the following process given at any time $t\geq0$ by: $$q\mathrm{e}^{-3\eta t}+(p+3\eta q)\mathrm{e}^{-3\eta t}\frac{(\mathrm{e}^{2\eta t}-1)}{2\eta}+\sigma\mathrm{e}^{-3\eta t}\int_0^{(\mathrm{e}^{2\eta t}-1)/2\eta}B_s\mathrm{d}s.$$

It follows from the change of time $s=(\mathrm{e}^{2\eta t}-1)/2\eta$ that 
\begin{align*}
&\mathbb{P}_{(q,p)}(\tau^\eta_\partial>t)\\
&=\mathbb{P}\left(\forall 0\leq s\leq(\mathrm{e}^{2\eta t}-1)/2\eta,\qquad 0<q+(p+3\eta q)s+\sigma\int_0^sB_r\mathrm{d}r<(1+2\eta s)^{3/2}\right).\numberthis\label{eq:eq1 lemma 3.12}
\end{align*}

Let us start by showing the upper-bound, i.e. there exists $C_t>0$ such that for all $(q,p)\in D$,
\begin{equation}\label{eq:upper bound eta langevin}
\mathbb{P}_{(q,p)}(\tau^\eta_\partial>t)\leq C_tG_{\eta,\sigma}(q,p).
\end{equation}

By~\eqref{eq:eq1 lemma 3.12} one has
\begin{align*}
&\mathbb{P}_{(q,p)}(\tau^\eta_\partial>t)\\ 
&\leq\mathbb{P}\left(\forall 0\leq s\leq(\mathrm{e}^{2\eta t}-1)/2\eta,\qquad 0<q\mathrm{e}^{-3\eta t}+(p+3\eta q)\mathrm{e}^{-3\eta t}s+\sigma\mathrm{e}^{-3\eta t}\int_0^sB_r\mathrm{d}r<1\right).
\end{align*}

Therefore, it follows from Proposition~\ref{prop:two-sided proba final} that for all $t>0$, there exists $c_t>0$ such that for all $(q,p)\in D$,
\begin{align*}
\mathbb{P}_{(q,p)}(\tau^\eta_\partial>t)&\leq c_th\left(q\mathrm{e}^{-3\eta t},(p+3\eta q)\mathrm{e}^{-\eta t}/\sigma^{2/3}\right)\\
&= c_t\mathrm{e}^{-\eta t/2}h\left(q,(p+3\eta q)/\sigma^{2/3}\right)\numberthis\label{eq:first ineq upper bound langevin eta}.
\end{align*} 
Furthermore, by~\eqref{eq:eq1 lemma 3.12},
\begin{align*} 
&\mathbb{P}_{(q,p)}(\tau^\eta_\partial>t)\\
&=\mathbb{P}\left(\forall 0\leq s\leq(\mathrm{e}^{2\eta t}-1)/2\eta,\qquad 1-(1+2\eta s)^{3/2}<q+(p+3\eta q)s+\sigma\int_0^s\left(B_r-\frac{3\eta}{\sigma}\sqrt{1+2\eta r}\right)\mathrm{d}r<1\right)\\
&=\mathbb{P}\left(\forall 0\leq s\leq(\mathrm{e}^{2\eta t}-1)/2\eta,\qquad 0<1-q-(p+3\eta q)s+\sigma\int_0^s\left(B_r+\frac{3\eta}{\sigma}\sqrt{1+2\eta r}\right)\mathrm{d}r<(1+2\eta s)^{3/2}\right)\numberthis\label{eq:eq3 lemma 3.12}
\end{align*}
Now let 
$$\widehat{\tau}^\sigma_\eta=\inf\{t>0:\widehat{q}^\sigma_t\notin(0,(1+2\eta t)^{3/2})\}.$$
Then, by Girsanov Lemma, under the probability $\mathbb{Q}_t$ defined on $\mathcal{F}_t$ for $t\geq0$ where $(\mathcal{F}_t)_{t\geq0}$ is the natural filtration of the Brownian motion,  
$$\frac{\mathrm{d}\mathbb{Q}_t}{\mathrm{d}\mathbb{P}}=\mathrm{exp}\left(-\frac{3\eta}{\sigma}\int_0^t\sqrt{1+2\eta s}\mathrm{d}B_s-\frac{9\eta^2}{2\sigma^2}\int_0^t(1+2\eta s)\mathrm{d}s\right),$$
the process $\left(B_t+\frac{3\eta}{\sigma}\sqrt{1+2\eta t}\right)_{t\geq0}$ is a Brownian motion. Therefore,
\begin{equation}\label{eq:girsanov 1}   
\mathbb{P}_{(q,p)}(\tau^\eta_\partial>t)=\mathbb{E}_{(1-q,-(p+3\eta q))}\left[\mathbb{1}_{\widehat{\tau}^{\sigma}_\eta>(\mathrm{e}^{2\eta t}-1)/2\eta}Z_t^\sigma\right],  
\end{equation} 
where $$Z_t^\sigma=\mathrm{exp}\left(\int_0^{(\mathrm{e}^{2\eta t}-1)/2\eta}-\frac{3\eta}{\sigma^2}\sqrt{1+2\eta s}\mathrm{d}\widehat{p}^\sigma_s-\frac{9\eta^2}{2\sigma^2}\int_0^{(\mathrm{e}^{2\eta t}-1)/2\eta}(1+2\eta s)\mathrm{d}s\right).$$
By integration-by-parts, one has $\mathbb{P}_{(1-q,-(p+3\eta q))}$ almost-surely,
\begin{equation}\label{eq:computation of Z 1}
    \int_0^{(\mathrm{e}^{2\eta t}-1)/2\eta}-\frac{3\eta}{\sigma^2}\sqrt{1+2\eta s}\mathrm{d}\widehat{p}^\sigma_s=-\frac{3\eta}{\sigma^2}\mathrm{e}^{\eta t}\widehat{p}^\sigma_{(\mathrm{e}^{2\eta t}-1)/2\eta}-\frac{3\eta}{\sigma^2}(p+3\eta q)+\int_0^{(\mathrm{e}^{2\eta t}-1)/2\eta}\frac{3\eta^2}{\sigma^2}\frac{\widehat{p}^\sigma_s}{\sqrt{1+2\eta s}}\mathrm{d}s.
\end{equation}
Furthermore, additional integration-by-parts yields the following equality
\begin{equation}\label{eq:computation of Z 2}
    \int_0^{(\mathrm{e}^{2\eta t}-1)/2\eta}\frac{3\eta^2}{\sigma^2}\frac{\widehat{p}^\sigma_s}{\sqrt{1+2\eta s}}\mathrm{d}s=\frac{3\eta^2}{\sigma^2}\mathrm{e}^{-\eta t}\widehat{q}^\sigma_{(\mathrm{e}^{2\eta t}-1)/2\eta}-\frac{3\eta^2}{\sigma^2}(1-q)+\frac{3\eta^3}{\sigma^2}\int_0^{(\mathrm{e}^{2\eta t}-1)/2\eta}\frac{\widehat{q}^\sigma_s}{(1+2\eta s)^{3/2}}\mathrm{d}s.
\end{equation} 
As a result, since $(\widehat{q}^\sigma_s)_{s\in[0,(\mathrm{e}^{2\eta t}-1)/2\eta]}$ is bounded under the event $\widehat{\tau}^\sigma_\eta>(\mathrm{e}^{2\eta t}-1)/2\eta$, it follows from~\eqref{eq:girsanov 1} that for all $t>0$, there exists $c_t>0$ such that
$$\mathbb{P}_{(q,p)}(\tau^\eta_\partial>t)\leq c_t\mathrm{e}^{-3\eta p/\sigma^2}\mathbb{E}_{(1-q,-(p+3\eta q))}\left[\mathbb{1}_{\widehat{\tau}^{\sigma}_\eta>(\mathrm{e}^{2\eta t}-1)/2\eta}\mathrm{exp}\left(-\frac{3\eta}{\sigma^2}\mathrm{e}^{\eta t}\widehat{p}^\sigma_{(\mathrm{e}^{2\eta t}-1)/2\eta}\right)\right].$$
For any $(q',p')\in D$ and $t>0$, the process $(\widehat{q}^\sigma_s,\widehat{p}^\sigma_s)_{s\geq0}$ starting from $(q',p')$ shares the same law as the process $(\mathrm{e}^{3\eta t}\widehat{q}^{\sigma\mathrm{e}^{-3\eta t}}_s,\mathrm{e}^{3\eta t}\widehat{p}^{\sigma\mathrm{e}^{-3\eta t}}_s)_{s\geq0}$ whenever the process $(\widehat{q}^{\sigma\mathrm{e}^{-3\eta t}}_s,\widehat{p}^{\sigma\mathrm{e}^{-3\eta t}}_s)_{s\geq0}$ starts from $(q'\mathrm{e}^{-3\eta t},p'\mathrm{e}^{-3\eta t})$. In addition, if for all $s\in(0,(\mathrm{e}^{2\eta t}-1)/2\eta)$, $\mathrm{e}^{3\eta t}\widehat{q}^{\sigma\mathrm{e}^{-3\eta t}}_s\in(0,(1+2\eta s)^{3/2})$ implies that for all $s\in(\mathrm{e}^{2\eta t}-1)/2\eta$, $\widehat{q}^{\sigma\mathrm{e}^{-3\eta t}}_s\in(0,1)$. Consequently, reinjecting into the above expectation,
$$\mathbb{P}_{(q,p)}(\tau^\eta_\partial>t)\leq c_t\mathrm{e}^{-3\eta p/\sigma^2}\mathbb{E}_{((1-q)\mathrm{e}^{-3\eta t},-(p+3\eta q)\mathrm{e}^{-3\eta t})}\left[\mathbb{1}_{\widehat{\tau}^{\sigma\mathrm{e}^{-3\eta t}}_\partial>(\mathrm{e}^{2\eta t}-1)/2\eta}\mathrm{exp}\left(-\frac{3\eta}{\sigma^2}\mathrm{e}^{4\eta t}\widehat{p}^{\sigma\mathrm{e}^{-3\eta t}}_{(\mathrm{e}^{2\eta t}-1)/2\eta}\right)\right].$$

Using the two-sided estimates on the transition density of the killed process~\eqref{eq:int brownian motion with sigma} ensured by Proposition~\ref{prop:first exit estimates implies two sided}, there exists $c'_t>0$ such that
\begin{align*}
&\mathbb{E}_{((1-q)\mathrm{e}^{-3\eta t},-(p+3\eta q)\mathrm{e}^{-3\eta t})}\left[\mathbb{1}_{\widehat{\tau}^{\sigma\mathrm{e}^{-3\eta t}}_\partial>(\mathrm{e}^{2\eta t}-1)/2\eta}\mathrm{exp}\left(-\frac{3\eta}{\sigma^2}\mathrm{e}^{4\eta t}\widehat{p}^{\sigma\mathrm{e}^{-3\eta t}}_{(\mathrm{e}^{2\eta t}-1)/2\eta}\right)\right]\\
&\leq c'_t\mathbb{P}_{((1-q)\mathrm{e}^{-3\eta t},-(p+3\eta q)\mathrm{e}^{-3\eta t})}(\widehat{\tau}^{\sigma\mathrm{e}^{-3\eta t}}_\partial>(\mathrm{e}^{2\eta t}-1)/2\eta).    
\end{align*} 
As a result, by Proposition~\ref{prop:two-sided proba final},
\begin{align*}
&\mathbb{E}_{((1-q)\mathrm{e}^{-3\eta t},-(p+3\eta q)\mathrm{e}^{-3\eta t})}\left[\mathbb{1}_{\widehat{\tau}^{\sigma\mathrm{e}^{-3\eta t}}_\partial>(\mathrm{e}^{2\eta t}-1)/2\eta}\mathrm{exp}\left(-\frac{3\eta}{\sigma^2}\mathrm{e}^{4\eta t}\widehat{p}^\sigma_{(\mathrm{e}^{2\eta t}-1)/2\eta}\right)\right]\\
&\leq c''_th\left((1-q)\mathrm{e}^{-3\eta t},-(p+3\eta q)\mathrm{e}^{-\eta t}/\sigma^{2/3}\right)\\
&=c''_t\mathrm{e}^{-\eta t/2}h\left(1-q,-(p+3\eta q)/\sigma^{2/3}\right),
\end{align*} 
for some $c''_t>0$, hence the existence of a constant $\tilde{c}_t>0$ such that
$$\mathbb{P}_{(q,p)}(\tau^\eta_\partial>t)\leq \tilde{c}_t\mathrm{e}^{-3\eta p/\sigma^2}h\left(1-q,-(p+3\eta q)/\sigma^{2/3}\right),$$
which ensures~\eqref{eq:upper bound eta langevin} taking the minimum with~\eqref{eq:first ineq upper bound langevin eta}. It remains now to prove the analogous lower-bound, i.e. for all $t>0$, there exists $C'_t>0$ such that
\begin{equation}\label{eq:lower bound langevin eta}
    \mathbb{P}_{(q,p)}(\tau^\eta_\partial>t)\geq C'_tG_{\eta,\sigma}(q,p).
\end{equation}
We proceed similarly to the phase space decomposition used in Section~\ref{lower-bound subsection}. Let $t>0$, we first consider the case $(p+3\eta q)/\sigma^{2/3}\in[-3q^{1/3}/t,3(1-q)^{1/3}/t]$. By~\eqref{eq:eq1 lemma 3.12} and Proposition~\ref{prop:two-sided proba final}, there exists $c_t>0$ such that
\begin{align*}
\mathbb{P}_{(q,p)}(\tau^\eta_\partial>t)&\geq\mathbb{P}_{(q,p+3\eta q)}\left(\widehat{\tau}^\sigma_\partial>(\mathrm{e}^{2\eta t}-1)/2\eta\right)\numberthis\label{eq:eq2 lemma 3.12}\\
&\geq c_t H\left(q,(p+3\eta q)/\sigma^{2/3}\right)\\
&\geq c'_t G_{\eta,\sigma}(q,p),
\end{align*}
for some $c'_t>0$ since $\mathrm{e}^{-3\eta p/\sigma^2}$ is bounded from below and above by assumption. 

Assume now that $(p+3\eta q)/\sigma^{2/3}<-3q^{1/3}/t$. Then one has that $h\left(q,(p+3\eta q)/\sigma^{2/3}\right)\leq q^{1/6}g(-3/t)$. Additionally, since there exist $\lambda>0$ such that $g(z)\geq\lambda\sqrt{z}$ for all $z\geq0$ by Remark~\ref{asymptotics g}, one has
\begin{align*}
h\left(1-q,-(p+3\eta q)/\sigma^{2/3}\right)&\geq(1-q)^{1/6}g\left(\frac{3}{t}\frac{q^{1/3}}{(1-q)^{1/3}}\right)\\
&\geq\lambda\sqrt{\frac{3}{t}}q^{1/6}\geq\frac{\lambda}{g(-3/t)}\sqrt{\frac{3}{t}}h\left(q,(p+3\eta q)/\sigma^{2/3}\right).\numberthis\label{eq:decomposition phase space lower bound}
\end{align*}
As a result, using~\eqref{eq:eq2 lemma 3.12} and Proposition~\ref{prop:two-sided proba final} there exists $c''_t>0$ such that 
$$\mathbb{P}_{(q,p)}(\tau^\eta_\partial>t)\geq c_t H\left(q,(p+3\eta q)/\sigma^{2/3}\right)\geq c''_th\left(q,(p+3\eta q)/\sigma^{2/3}\right).$$
Consider now the case $(p+3\eta q)/\sigma^{2/3}>3(1-q)^{1/3}/t$. By~\eqref{eq:eq3 lemma 3.12} and Girsanov lemma, one has that
$$\mathbb{P}_{(q,p)}(\tau^\eta_\partial>t)\geq\mathbb{E}_{(1-q,-(p+3\eta q))}\left[\mathbb{1}_{\widehat{\tau}^{\sigma}_\partial>(\mathrm{e}^{2\eta t}-1)/2\eta}Z_t^\sigma\right].$$ 
Moreover, by the previous computation of $Z_t^\sigma$ in~\eqref{eq:computation of Z 1},~\eqref{eq:computation of Z 2} there exists $c_t>0$ such that
$$\mathbb{E}_{(1-q,-(p+3\eta q))}\left[\mathbb{1}_{\widehat{\tau}^{\sigma}_\partial>(\mathrm{e}^{2\eta t}-1)/2\eta}Z_t^\sigma\right]\geq\mathrm{e}^{-3\eta p/\sigma^2}\mathbb{E}_{(1-q,-(p+3\eta q))}\left[\mathbb{1}_{\widehat{\tau}^{\sigma}_\partial>(\mathrm{e}^{2\eta t}-1)/2\eta}\mathrm{exp}\left(-\frac{3\eta}{\sigma^2}\mathrm{e}^{\eta t}\widehat{p}^\sigma_{(\mathrm{e}^{2\eta t}-1)/2\eta}\right)\right].$$
Using the two-sided estimates for the killed process~\eqref{eq:int brownian motion with sigma} ensured by Proposition~\ref{prop:first exit estimates implies two sided}, there exists $c'_t>0$ such that
\begin{align*}
&\mathbb{E}_{(1-q,-(p+3\eta q))}\left[\mathbb{1}_{\widehat{\tau}^{\sigma}_\partial>(\mathrm{e}^{2\eta t}-1)/2\eta}\mathrm{exp}\left(-\frac{3\eta}{\sigma^2}\mathrm{e}^{\eta t}\widehat{p}^\sigma_{(\mathrm{e}^{2\eta t}-1)/2\eta}\right)\right]\\
&\geq c'_t\mathbb{P}_{(1-q,-(p+3\eta q))}\left(\widehat{\tau}^\sigma_\partial>(\mathrm{e}^{2\eta t}-1)/2\eta\right)\\
&\geq c''_tH\left(1-q,-(p+3\eta q)/\sigma^{2/3}\right).
\end{align*}
Besides, similarly to the computation in~\eqref{eq:decomposition phase space lower bound} one can see that $H\left(1-q,-(p+3\eta q)/\sigma^{2/3}\right)\geq\tilde{c}_th\left(1-q,-(p+3\eta q)/\sigma^{2/3}\right)$ for some $\tilde{c}_t>0$. Therefore, this ensures that for some $\tilde{c}'_t>0$
$$\mathbb{P}_{(q,p)}(\tau^\eta_\partial>t)\geq\tilde{c}'_t\mathrm{e}^{-3\eta p/\sigma^2}h\left(1-q,-(p+3\eta q)/\sigma^{2/3}\right),$$
which concludes the proof of the lower bound.
\end{proof}
\begin{proof}[Proof of Lemma~\ref{lem:girsanov exponentielle estimate}]
Let $f:D\mapsto\mathbb{R}_+$ satisfying~\eqref{eq:condition on f}. For $t>0$, $(q,p)\in D$ and $\eta\geq0$, let us consider the following expectation:
$$u_t(q,p):=\mathbb{E}_{(q,p)}\left[\mathbb{1}_{\tau^\eta_\partial>t}f(q^\eta_t,p^\eta_t)\mathrm{exp}\left(-\frac{2\eta}{\sigma^2} (p^\eta_t)^2-\frac{3\eta^2}{\sigma^2} q^\eta_tp^\eta_t\right)\right].$$
By Girsanov's lemma, one has that
\begin{align*}
u_t(q,p)&=\mathbb{E}_{(q,p)}\left[\mathbb{1}_{\widehat{\tau}^\sigma_\partial>t}f(\widehat{q}^\sigma_t,\widehat{p}^\sigma_t)\mathrm{exp}\left(-\frac{2\eta}{\sigma^2} (\widehat{p}^\sigma_t)^2-\frac{3\eta^2}{\sigma^2} \widehat{q}^\sigma_t\widehat{p}^\sigma_t\right)Z_t^\sigma\right],
\end{align*}  
where $$Z_t^\sigma=\mathrm{exp}\left(\int_0^{t}\left(\frac{4\eta}{\sigma^2} \widehat{p}^\sigma_s+\frac{3\eta^2}{\sigma^2}\widehat{q}^\sigma_s\right)\mathrm{d}\widehat{p}^\sigma_s-\frac{1}{2}\int_0^{t}\left(\frac{4\eta}{\sigma} \widehat{p}^\sigma_s+\frac{3\eta^2}{\sigma}\widehat{q}^\sigma_s\right)^2\mathrm{d}s\right).$$
 By integration by parts, one has $\mathbb{P}_{(q,p)}$-almost surely,
 $$\int_0^{t}\left(\frac{4\eta}{\sigma^2} \widehat{p}^\sigma_s+\frac{3\eta^2}{\sigma^2}\widehat{q}^\sigma_s\right)\mathrm{d}\widehat{p}^\sigma_s=\frac{2\eta}{\sigma^2} (\widehat{p}^\sigma_t)^2-\frac{2\eta}{\sigma^2} p^2-2\eta t+\frac{3\eta^2}{\sigma^2}(\widehat{q}^\sigma_t\widehat{p}^\sigma_t-qp)-\frac{3\eta^2}{\sigma^2}\int_0^t(\widehat{p}^\sigma_s)^2\mathrm{d}s$$ 
In addition,
\begin{align*}
\frac{1}{2}\int_0^{t}\left(\frac{4\eta}{\sigma} \widehat{p}^\sigma_s+\frac{3\eta^2}{\sigma}\widehat{q}^\sigma_s\right)^2\mathrm{d}s&=\frac{8\eta^2}{\sigma^2}\int_0^t(\widehat{p}^\sigma_s)^2\mathrm{d}s+\frac{12\eta^3}{\sigma^2}\int_0^t\widehat{p}^\sigma_s\widehat{q}^\sigma_s\mathrm{d}s+\frac{9\eta^4}{2\sigma^2}\int_0^t(\widehat{q}^\sigma_s)^2\mathrm{d}s\\
&=\frac{8\eta^2}{\sigma^2}\int_0^t(\widehat{p}^\sigma_s)^2\mathrm{d}s+\frac{6\eta^3}{\sigma^2}(\widehat{q}^\sigma_t)^2-\frac{6\eta^3}{\sigma^2}q^2+\frac{9\eta^4}{2\sigma^2}\int_0^t(\widehat{q}_s^\sigma)^2\mathrm{d}s.
\end{align*}  
As a result, for all $t>0$, 
$$u_t(q,p)\propto\mathrm{exp}\left(-\frac{2\eta}{\sigma^2}p^2-\frac{3\eta^2}{\sigma^2}qp\right)\mathbb{E}_{(q,p)}\left[\mathbb{1}_{\widehat{\tau}^\sigma_\partial>t}f(\widehat{q}^\sigma_t,\widehat{p}^\sigma_t)\mathrm{exp}\left(-\frac{11\eta^2}{\sigma^2}\int_0^{t}(\widehat{p}^\sigma_s)^2\mathrm{d}s\right)\right].$$ 

In order to conclude the proof, we need to show that $u_t(q,p)\propto\mathbb{P}_{(q,p)}(\tau^\eta_\partial>t)$ and use Lemma~\ref{lem:first exit time integ orstein uhlenbeck} to obtain the result with $\lambda=\sqrt{11}\eta$. As a first step, notice that there exists $C>0$ such that for all $(q,p)\in D$, $$u_t(q,p)\leq C \mathbb{E}_{(q,p)}\left[\mathbb{1}_{\widehat{\tau}^\sigma_\partial>t}f(\widehat{q}^\sigma_t,\widehat{p}^\sigma_t)\right].$$ In addition, by~\eqref{eq:condition on f} and Remark~\ref{rk:integrability against H}, the function $f$ is integrable against $H(q,-p/\sigma^{2/3})$. Therefore, by the two-sided estimates from Proposition~\ref{prop:first exit estimates implies two sided} there exists a constant $C_t>0$ such that for all $(q,p)\in D$,
\begin{align*}
 u_t(q,p)&\leq C_tH(q,p/\sigma^{2/3})\numberthis\label{eq:control on u_t}.
\end{align*}
Let us start by proving the upper-bound in the statement $u_t(q,p)\propto\mathbb{P}_{(q,p)}(\tau^\eta_\partial>t)$. Calling $\mathrm{p}^{\eta,D}_t$ the transition density of $(q^\eta_t,p^\eta_t)_{t\geq0}$ killed outside of $D$, we have using Chapman-Kolmogorov relation that for all $(q,p),(q',p')\in D$,
$$\mathrm{p}^{\eta,D}_t(q,p,q',p')=\iint_{D\times D}\mathrm{p}^{\eta,D}_{t/3}((q,p),y)\mathrm{p}^{\eta,D}_{t/3}(y,z)\mathrm{p}^{\eta,D}_{t/3}(z,(q',p'))\mathrm{d}y\mathrm{d}z.$$
Using the uniform Gaussian upper-bound $\alpha_t$ of $\mathrm{p}^{\eta,D}_{t/3}$ one has that  
$$\mathrm{p}^{\eta,D}_t(q,p,q',p')\leq \alpha_t\mathbb{P}_{(q,p)}(\tau^\eta_\partial>t/3)\int\mathrm{p}^{\eta,D}_{t/3}(z,(q',p'))\mathrm{d}z.$$
Therefore, multiplying the above inequality by $f(q',p')\mathrm{exp}\left(-\frac{2\eta}{\sigma^2} (p')^2-\frac{3\eta^2}{\sigma^2} q'p'\right)$ and integrating over $(q',p')\in D$, 
$$u_t(q,p)\leq \alpha_t\mathbb{P}_{(q,p)}(\tau^\eta_\partial>t/3)\int_Du_{t/3}(z)\mathrm{d}z,$$
where the integral in the right-hand side of the inequality above  is finite given~\eqref{eq:control on u_t}. This gives us the wanted upper-bound.

Let us now prove the lower-bound. In order to do that let us apply Lemma~\ref{lem:control in a compact} to the process $(q^\eta_{t},p^\eta_{t})_{t\geq0}$ in order to obtain the existence of a compact set $K_t\subset D$ such that for all $t>0$,
$$c_t:=\inf_{(q,p)\in D}\frac{\mathbb{P}_{(q,p)}((q^\eta_{t},p^\eta_{t})\in K_t,\tau^\eta_\partial>t)}{\mathbb{P}_{(q,p)}(\tau^\eta_\partial>t)}>0.$$
Applying Chapman-Kolmogorov and using the positivity and continuity of the killed transition density~\cite[Theorem 2.20]{kFP} with $c'_t=\inf_{y,z\in K_{t/3}}\mathrm{p}^{\eta,D}_{t/3}(y,z)$, one has
\begin{align*}
\mathrm{p}^{\eta,D}_t(q,p,q',p')&\geq \iint_{K_{t/3}\times K_{t/3}}\mathrm{p}^{\eta,D}_{t/3}((q,p),y)\mathrm{p}^{\eta,D}_{t/3}(y,z)\mathrm{p}^{\eta,D}_{t/3}(z,(q',p'))\mathrm{d}y\mathrm{d}z\\
&\geq c'_t\mathbb{P}_{(q,p)}((q^\eta_{t/3},p^\eta_{t/3})\in K_{t/3},\tau^\eta_\partial>t/3)\int_{K_{t/3}}\mathrm{p}^{\eta,D}_{t/3}(z,(q',p'))\mathrm{d}z\\
&\geq c_tc'_t\mathbb{P}_{(q,p)}(\tau^\eta_\partial>t/3)\int_{K_{t/3}}\mathrm{p}^{\eta,D}_{t/3}(z,(q',p'))\mathrm{d}z.
\end{align*}
Again, multiplying the above inequality by $f(q',p')\mathrm{exp}\left(-\frac{2\eta}{\sigma^2} (p')^2-\frac{3\eta^2}{\sigma^2} q'p'\right)$ and integrating over $(q',p')\in D$, 
$$u_t(q,p)\geq c_tc'_t\mathbb{P}_{(q,p)}(\tau^\eta_\partial>t/3)\int_{K_{t/3}}u_{t/3}(z)\mathrm{d}z,$$
which concludes the proof that $u_t(q,p)\propto\mathbb{P}_{(q,p)}(\tau^\eta_\partial>t)$.
\end{proof}

We are now ready to prove Proposition~\ref{prop:estimates first exit time Langevin}.
\begin{proof}[Proof of Proposition~\ref{prop:estimates first exit time Langevin}]
Let $t>0$ and $(q,p)\in D$.  
By Girsanov's lemma,
\begin{align*}
\mathbb{P}_{(q,p)}(\tau_\partial>t)&=\mathbb{E}_{(q,p)}\left[\mathbb{1}_{\widehat{\tau}^\sigma_\partial>t}Z_t^\sigma\right],
\end{align*}  
where $$Z_t^\sigma=\mathrm{exp}\left(\int_0^{t}\left(\frac{\alpha \widehat{q}^\sigma_s}{\sigma^2}+\frac{\beta}{\sigma^2}+\frac{\gamma \widehat{p}_s^\sigma}{\sigma^2}\right)\mathrm{d}\widehat{p}^\sigma_s-\frac{1}{2}\int_0^{t}\left(\frac{\alpha \widehat{q}^\sigma_s}{\sigma}+\frac{\beta}{\sigma}+\frac{\gamma \widehat{p}_s^\sigma}{\sigma}\right)^2\mathrm{d}s\right).$$ 
In addition, by integration by parts,
$$\int_0^{t}\left(\frac{\alpha \widehat{q}^\sigma_s}{\sigma^2}+\frac{\beta}{\sigma^2}+\frac{\gamma \widehat{p}_s^\sigma}{\sigma^2}\right)\mathrm{d}\widehat{p}^\sigma_s=\frac{\alpha}{\sigma^2}\widehat{q}^\sigma_t\widehat{p}^\sigma_t-\frac{\alpha}{\sigma^2}qp-\frac{\alpha}{\sigma^2}\int_0^t(\widehat{p}_s^\sigma)^2\mathrm{d}s+\frac{\beta}{\sigma^2}\widehat{p}_t^\sigma-\frac{\beta}{\sigma^2}p+\frac{\gamma}{2\sigma^2}(\widehat{p}_t^\sigma)^2-\frac{\gamma}{2\sigma^2}p^2-\frac{\gamma t}{2}.$$
Moreover,
\begin{align*}
&\frac{1}{2}\int_0^{t}\left(\frac{\alpha \widehat{q}^\sigma_s}{\sigma}+\frac{\beta}{\sigma}+\frac{\gamma \widehat{p}_s^\sigma}{\sigma}\right)^2\mathrm{d}s\\
&=\frac{1}{2\sigma^2}\left(\int_0^t\alpha^2(\widehat{q}^\sigma_s)^2\mathrm{d}s+2\alpha\beta\int_0^t\widehat{q}_s^\sigma\mathrm{d}s+\beta^2t+2\gamma\alpha\int_0^t\widehat{p}_s^\sigma \widehat{q}_s^\sigma\mathrm{d}s+2\gamma\beta(\widehat{q}^\sigma_t-q)+\gamma^2\int_0^t(\widehat{p}_s^\sigma)^2\mathrm{d}s\right)\\
&=\frac{\gamma^2}{2\sigma^2}\int_0^t(\widehat{p}_s^\sigma)^2\mathrm{d}s+\frac{\gamma\alpha}{2\sigma^2}((\widehat{q}_t^\sigma)^2-q^2)+\frac{\gamma\beta}{\sigma^2}(\widehat{q}_t^\sigma-q)+\frac{1}{2\sigma^2}\int_0^t(\alpha^2(\widehat{q}_s^\sigma)^2+2\alpha\beta \widehat{q}_s^\sigma)\mathrm{d}s+\frac{\beta^2t}{2\sigma^2} 
\end{align*}
As a result, since $(\widehat{q}_s^\sigma)_{s\in[0,t]}$ is bounded on the event $\{\widehat{\tau}^\sigma_\partial>t\}$, for all $t>0$,
\begin{align*}
\mathbb{P}_{(q,p)}(\tau_\partial>t)&\propto\mathrm{exp}\left(-\frac{\alpha}{\sigma^2}qp-\frac{\beta}{\sigma^2}p-\frac{\gamma}{2\sigma^2}p^2\right)\mathbb{E}_{(q,p)}\left[\mathbb{1}_{\widehat{\tau}^\sigma_\partial>t}f(\widehat{q}_t^\sigma,\widehat{p}_t^\sigma)\mathrm{exp}\left(-\frac{(\alpha+\gamma^2/2)}{\sigma^2}\int_0^t(\widehat{p}_s^\sigma)^2\mathrm{d}s\right)\right],
\end{align*}
where $f(q,p)=\mathrm{exp}\left(\frac{\alpha}{\sigma^2}qp+\frac{\beta}{\sigma^2}p+\frac{\gamma}{2\sigma^2}p^2\right)$ which satisfies~\eqref{eq:condition on f} by the expression of $H$~\eqref{def H} and Remark~\ref{asymptotics g}. Therefore, Lemma~\ref{lem:girsanov exponentielle estimate} concludes the proof.
\end{proof}

\section{Two-sided estimates and long-time asymptotics}\label{sec:two-sided density}

The goal of this section is to prove the two-sided estimates given the first exit time probability estimates provided in the previous sections. Namely in Section~\ref{sec:equivalence two sided}, we shall prove the more general result in multi-dimension given in Proposition~\ref{prop:first exit estimates implies two sided} ensuring two-sided estimates whenever estimates on the first exit-time probability are given. Additionnally we shall investigate the long-time asymptotics of the two-sided estimates appearing in Theorem~\ref{thm:two-sided p_t^D}, in Section~\ref{sec:long time asymptotics}. We shall also provide the proof of the long-time asymptotics of the semigroup conditionned on not being killed stated in Theorem~\ref{thm:cv conditioned distrib}.
 
\subsection{Proof of Proposition~\ref{prop:first exit estimates implies two sided}}\label{sec:equivalence two sided}
Let $d\geq1$. Let $\mathcal{O}$ be a $\mathcal{C}^2$ bounded connected open set of $\mathbb{R}^d$ and let $D=\mathcal{O}\times\mathbb{R}^d$.  Let $(q_t,p_t)_{t\geq0}$ be the process in $\mathbb{R}^d$ satisfying 
\begin{equation}\label{eq:Langevin multi dim}
  \left\{
    \begin{aligned}
        &\mathrm{d}q_t=p_t \mathrm{d}t , \\
        &\mathrm{d}p_t=F(q_t)\mathrm{d}t-\gamma p_t\mathrm{d}t+\sigma\mathrm{d}B_t,
    \end{aligned}
\right.  
\end{equation}
where $F\in\mathcal{C}^\infty(\mathbb{R}^d)$, $\gamma\in\mathbb{R}$, $\sigma>0$. For $(x,y)\in D$, we denote by $\mathrm{p}_t^D(x,y)$ the transition density of $(q_t,p_t)_{t\geq0}$ killed outside of $D$ and $\tau_\partial$ its first exit time from $D$. Let us also define the process $(\tilde{q}_t,\tilde{p}_t)_{t\geq0}$ in $\mathbb{R}^d$ satisfying
\begin{equation}\label{eq:Langevin_adjoint}
  \left\{
    \begin{aligned}
        &\mathrm{d}\tilde{q}_t=-\tilde{p}_t \mathrm{d}t , \\
        &\mathrm{d}\tilde{p}_t=-F(\tilde{q}_t)\mathrm{d}t+\gamma \tilde{p}_t\mathrm{d}t+\sigma\mathrm{d}B_t.
    \end{aligned}
\right.  
\end{equation}
We also denote by $\tilde{\mathrm{p}}_t^D(x,y)$ the transition density of $(\tilde{q}_t,\tilde{p}_t)_{t\geq0}$ killed outside of $D$ and $\tilde{\tau}_\partial$ its first exit time from $D$. It was shown in~\cite[Theorem 6.2]{kFP} that for all $t>0$, $x,y\in D$,
\begin{equation}\label{eq:reversibility} 
    \mathrm{p}_t^D(x,y)=\mathrm{e}^{d\gamma t}\tilde{\mathrm{p}}_t^D(y,x).
\end{equation}
Additionally, it was shown in~\cite[Theorem 2.13]{QSD1} that both killed semigroups admit smooth positive and bounded eigenvectors $\phi,\psi$ for their respective infinitesimal generators. In particular, there exists $\lambda_0>0$ such that for all $t\geq0$, $(q,p)\in D$,
\begin{equation}\label{eq:eigenvectors}
\mathbb{E}_{(q,p)}\left[\mathbb{1}_{\tau_\partial>t}\phi(q_{t},p_{t})\right]=\mathrm{e}^{-\lambda_0t}\phi(q,p),\quad\mathbb{E}_{(q,p)}\left[\mathbb{1}_{\tilde{\tau}_\partial>t}\psi(\tilde{q}_{t},\tilde{p}_{t})\right]=\mathrm{e}^{-(\lambda_0+d\gamma)t}\psi(q,p).
\end{equation} 
We are now ready to prove Proposition~\ref{prop:first exit estimates implies two sided}.
\begin{proof}[Proof of Proposition~\ref{prop:first exit estimates implies two sided}]
By Lemma~\ref{lem:control in a compact} and Proposition~\ref{prop:estimates first exit time Langevin}, one has for all $t>0$, the existence of a compact set $K_t\subset D$ such that
$$\beta_{t}:=\inf_{(q,p)\in D}\frac{\mathbb{P}_{(q,p)}((q_t,p_t)\in K_{t},\tau_\partial>t)}{\mathbb{P}_{(q,p)}(\tau_\partial>t)}>0.$$ 
In particular, we can choose the compact set $K_t$ above large enough such that it satisfies the following property:
\begin{equation}\label{def K_t}
    (q,p)\in K_t\Longleftrightarrow (q,-p)\in K_t.
\end{equation}
In addition, notice that the process $(\tilde{q}_t,-\tilde{p}_t)_{t\geq0}$ satisfies the same equation as~\eqref{eq:Langevin multi dim} where $-\gamma$ is replaced by $\gamma$. Since the assumptions in Lemma~\ref{lem:control in a compact} and Proposition~\ref{prop:estimates first exit time Langevin} do not depend on the sign of $\gamma$, we obtain the same control for the process $(\tilde{q}_t,-\tilde{p}_t)_{t\geq0}$
$$\tilde{\beta}_{t}:=\inf_{(q,p)\in D}\frac{\mathbb{P}_{(q,p)}((\tilde{q}_t,\tilde{p}_t)\in K_{t},\tilde{\tau}_\partial>t)}{\mathbb{P}_{(q,p)}(\tilde{\tau}_\partial>t)}>0.$$
We used here the fact that $\mathbb{P}_{(q,p)}((\tilde{q}_t,-\tilde{p}_t)\in K_{t},\tilde{\tau}_\partial>t)=\mathbb{P}_{(q,p)}((\tilde{q}_t,\tilde{p}_t)\in K_{t},\tilde{\tau}_\partial>t)$ by the symmetry of $K_t$.
We denote now by $\alpha_t$ the uniform bound of $\mathrm{p}^D_t$ on $D\times D$ which follows from~\cite[Theorem 2.19]{kFP}. 

Let $(q,p),(q',p')\in D$. We start by proving the upper-bound. Using the Chapman-Kolmogorov relation, one has
\begin{align*}
    \mathrm{p}^D_{t}(q,p,q',p')&=\iint_{D\times D}\mathrm{p}^D_{t/3}((q,p),y)\mathrm{p}^D_{t/3}(y,z)\mathrm{p}^D_{t/3}(z,(q',p'))\mathrm{d}y\mathrm{d}z\\
    &\leq\alpha_{t/3}\mathbb{P}_{(q,p)}(\tau_\partial>t/3)\int_{D}\mathrm{p}^D_{t/3}(z,(q',p'))\mathrm{d}z\\
    &=\alpha_{t/3}\mathrm{e}^{d\gamma t/3}\mathbb{P}_{(q,p)}(\tau_\partial>t/3)\int_{D}\tilde{\mathrm{p}}^D_{t/3}((q',p'),z)\mathrm{d}z\numberthis\label{chapman kolmogorov upper-bound} 
\end{align*}
by~\eqref{eq:reversibility}. In addition,
\begin{equation}\label{eq8}
    \int_D\tilde{\mathrm{p}}^D_{t/3}((q',p'),z)\mathrm{d}z=\mathbb{P}_{(q',p')}(\tilde{\tau}_\partial>t/3).
\end{equation}  
Consider now the probability $\mathbb{P}_{(q,p)}(\tau_\partial>t/3)$. By definition of $\beta_{t/3}$,
$$\mathbb{P}_{(q,p)}(\tau_\partial>t/3)\leq\frac{1}{\beta_{t/3}}\mathbb{P}_{(q,p)}((q_{t/3},p_{t/3})\in K_{t/3},\tau_\partial>t/3).$$
Let
$m_{t/3}=\inf_{(q,p)\in K_{t/3}}\phi(q,p)>0$ since $\phi$ is smooth and positive on $D$. It follows that
\begin{align*}
    \mathbb{P}_{(q,p)}((q_{t/3},p_{t/3})\in K_{t/3},\tau_\partial>t/3)&\leq \frac{1}{m_{t/3}}\mathbb{E}_{(q,p)}\left[\mathbb{1}_{\tau_\partial>t/3}\phi(q_{t/3},p_{t/3})\right]\\
    &=\frac{\mathrm{e}^{-\lambda_0t/3}}{m_{t/3}}\phi(q,p). 
\end{align*}  
by~\eqref{eq:eigenvectors}. As a result, 
\begin{equation}\label{eq10}
\mathbb{P}_{(q,p)}(\tau_\partial>t/3)\leq\frac{\mathrm{e}^{-\lambda_0t/3}}{\beta_{t/3}m_{t/3}}\phi(q,p).    
\end{equation}
Similarly,
$$\mathbb{P}_{(q',p')}(\tilde{\tau}_\partial>t/3)\leq\frac{\mathrm{e}^{-(\lambda_0+d\gamma)t/3}}{\tilde{\beta}_{t/3}\tilde{m}_{t/3}}\psi(q',p'),$$
where $\tilde{m}_t=\inf_{(q,p)\in K_{t/3}}\psi(q,p)>0$. Reinjecting into~\eqref{chapman kolmogorov upper-bound} we obtain the existence of a constant $\gamma_t>0$ such that
$$\mathrm{p}^D_{t}(q,p,q',p')\leq \gamma_t\phi(q,p)\psi(q',p').$$

Let us now prove the lower-bound for $\mathrm{p}^D_{t}$. 
Let 
$$M_{t/3}=\inf_{(q,p),(q',p')\in K_{t/3}}\mathrm{p}^D_{t/3}(q,p,q',p')>0,$$
by continuity and positivity of $\mathrm{p}^D_{t/3}$, see~\cite[Theorem 2.20]{kFP}. Again, by the Chapman-Kolmogorov relation, one has for all $t>0$, and for all $(q,p),(q',p')\in D$,
\begin{align*}
    &\mathrm{p}^D_{t}(q,p,q',p')\\
    &\geq\iint_{K_{t/3}\times K_{t/3}}\mathrm{p}^D_{t/3}((q,p),y)\mathrm{p}^D_{t/3}(y,z)\mathrm{p}^D_{t/3}(z,(q',p'))\mathrm{d}y\mathrm{d}z\\
    &\geq M_{t/3}\mathrm{e}^{d\gamma t/3}\mathbb{P}_{(q,p)}((q_{t/3},p_{t/3})\in K_{t/3},\tau_\partial>t/3)\mathbb{P}_{(q',p')}((\tilde{q}_{t/3},\tilde{p}_{t/3})\in K_{t/3},\tilde{\tau}_\partial>t/3),\numberthis\label{eq11}
\end{align*}
using~\eqref{eq:reversibility}. Furthermore, for all $(q,p)\in D$,
\begin{align*}
\mathbb{P}_{(q,p)}((q_{t/3},p_{t/3})\in K_{t/3},\tau_\partial>t/3)&\geq\beta_{t/3}\mathbb{P}_{(q,p)}(\tau_\partial>t/3)\\
&\geq\beta_{t/3}\mathbb{E}_{(q,p)}\left[\mathbb{1}_{\tau_\partial>t/3}\frac{\phi(q_{t/3},p_{t/3})}{\Vert \phi\Vert_\infty}\right]\\
&=\frac{\beta_{t/3}}{\Vert \phi\Vert_\infty}\mathrm{e}^{-\lambda_0 t/3}\phi(q,p).
\end{align*}
Similarly, for all $(q',p')\in D$,
$$\mathbb{P}_{(q',p')}((\tilde{q}_{t/3},\tilde{p}_{t/3})\in K_{t/3},\tilde{\tau}_\partial>t/3)\geq\frac{\tilde{\beta}_{t/3}}{\Vert \psi\Vert_\infty}\mathrm{e}^{-(\lambda_0+d\gamma) t/3}\psi(q',p').$$ 
Reinjecting into~\eqref{eq11} we obtain the existence of a constant $\gamma'_t>0$ such that for all $(q,p),(q',p')\in D$,
$$\mathrm{p}^D_{t}(q,p,q',p')\geq\gamma'_t\phi(q,p)\psi(q',p').$$
Hence, for all $t>0$, $(q,p),(q',p')\in D$,
$$\gamma'_t\phi(q,p)\psi(q',p')\leq\mathrm{p}^D_{t}(q,p,q',p')\leq\gamma_t\phi(q,p)\psi(q',p'),$$
which concludes the proof of Proposition~\ref{prop:first exit estimates implies two sided}. 
\end{proof}
The proof of the two-sided estimates in Theorem~\ref{thm:two-sided p_t^D} follows now immediately from Proposition~\ref{prop:first exit estimates implies two sided} and Proposition~\ref{prop:estimates first exit time Langevin}. We conclude this section with the proof of Corollary~\ref{coroll phi psi} which provides now explicit control in the two-sided estimates. 
\begin{proof}[Proof of Corollary~\ref{coroll phi psi}]
Integrating over $y\in D$ in Theorem~\ref{thm:two-sided p_t^D} yields that for all $t>0$,
$$\mathbb{P}_{(q,p)}(\tau_\partial>t)\propto \phi(q,p).$$
The estimate on $\phi$ then follows from Proposition~\ref{prop:estimates first exit time Langevin}. Regarding the estimate on $\psi$, we use the equality~\eqref{eq:reversibility} in the two-sided estimates and integrate this time over $x\in D$, we obtain then that for all $t>0$,
$$\mathbb{P}_{(q,p)}(\tilde{\tau}_\partial>t)\propto \psi(q,p).$$
In addition, since the process $(\tilde{q}_t,-\tilde{p}_t)_{t\geq0}$ satisfies the same equation as~\eqref{eq:Langevin multi dim} with $\gamma$ instead of $-\gamma$. Therefore, 
$$\mathbb{P}_{(q,p)}(\tilde{\tau}_\partial>t)\propto H_{\alpha,\beta,-\gamma,\sigma}(q,-p),$$
which ensures the desired estimate for $\psi$.
\end{proof}
\subsection{Long-time asymptotics}\label{sec:long time asymptotics}
The objective of this final section is to prove the long-time asymptotics~\eqref{eq:long-time cv constants} in Theorem~\ref{thm:two-sided p_t^D}. Additionally, we shall prove the uniform conditional ergodicity on the killed semigroup of~\eqref{eq:Langevin} stated in Theorem~\ref{thm:cv conditioned distrib}. The main ingredient of the proofs are the long-time asymptotics of the killed semigroup obtained in~\cite[Theorem 2.19]{QSD1} combined with the two-sided estimates from Theorem~\ref{thm:two-sided p_t^D}.

Let us start this subsection with the proof of the following proposition where we extend the long-time asymptotics of the killed semigroup obtained in~\cite[Theorem 2.12]{QSD1} to the set of functions $f\in\mathrm{L}_{H_{\alpha,\beta,\gamma,\sigma}}(D)$.
\begin{proposition}[Long time asymptotics]\label{prop:long time semigroup}
There exists a spectral gap $\alpha>0$ such that for all $t_0>0$, there exists $C_{t_0}>0$ such that for all $t\geq t_0$, for all $f\in\mathrm{L}_{H_{\alpha,\beta,\gamma,\sigma}}(D)$,
$$\left\vert\mathbb{E}_{(q,p)}\left[f(q_{t},p_{t})\mathbb{1}_{\tau_\partial>t}\right]-\mathrm{e}^{-\lambda_0 t}\frac{\int_D \psi f}{\int_D \phi \psi} \phi(q,p)\right\vert\leq C_{t_0}\Vert f\Vert_{H_{\alpha,\beta,\gamma,\sigma}}\phi(q,p)\mathrm{e}^{-(\lambda_0+\alpha)t}$$
\end{proposition}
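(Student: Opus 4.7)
The plan is to exploit the semigroup factorization
\begin{equation*}
P^D_t f \;=\; P^D_{t-t_0}\bigl(P^D_{t_0} f\bigr)\qquad (t\geq t_0)
\end{equation*}
and apply the long-time asymptotics of the killed semigroup already established in~\cite{QSD1} (which is valid for bounded/regular initial functions) to the regularized function $u := P^D_{t_0}f$. The point is that although $f$ may blow up near the boundary, $u$ inherits the smoothing provided by Corollary~\ref{corollary} and lives in a much friendlier space.

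\textbf{Step 1: regularization.} By Corollary~\ref{corollary} combined with the identification $\phi\propto H_{\alpha,\beta,\gamma,\sigma}$ of Corollary~\ref{coroll phi psi}, there exists a constant $c_{t_0}>0$ such that for all $(q,p)\in D$,
\begin{equation*}
|u(q,p)| \;\leq\; c_{t_0}\,\Vert f\Vert_{H_{\alpha,\beta,\gamma,\sigma}}\,\phi(q,p).
\end{equation*}
Since $\phi\in\mathcal{C}^b(D\cup\Gamma^+)$, this gives in particular $\Vert u\Vert_\infty\leq c_{t_0}\Vert\phi\Vert_\infty\Vert f\Vert_{H_{\alpha,\beta,\gamma,\sigma}}$, so $u$ is bounded.

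\textbf{Step 2: integral identity.} Using~\eqref{eq:reversibility} together with the eigenvector relation~\eqref{eq:eigenvectors} applied to $\psi$ on the time interval $[0,t_0]$, a Fubini computation yields
\begin{equation*}
\int_D \psi\,u \;=\; \int_D \psi\,P^D_{t_0}f \;=\; \int_D f\,\bigl(\mathrm{e}^{-\lambda_0 t_0}\psi\bigr) \;=\; \mathrm{e}^{-\lambda_0 t_0}\int_D \psi f.
\end{equation*}

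\textbf{Step 3: apply the known asymptotics.} By the Markov property, $P^D_t f = P^D_s u$ with $s := t-t_0\geq 0$. Applying the long-time convergence result of~\cite{QSD1} to the bounded function $u$ produces a spectral gap $\alpha>0$ and a universal constant $C>0$ such that
\begin{equation*}
\left\vert P^D_s u(q,p) - \mathrm{e}^{-\lambda_0 s}\frac{\int_D \psi u}{\int_D \phi\psi}\,\phi(q,p)\right\vert \;\leq\; C\,\Vert u\Vert_\infty\,\phi(q,p)\,\mathrm{e}^{-(\lambda_0+\alpha)s}.
\end{equation*}
Plugging in Step~2 and using $\mathrm{e}^{-\lambda_0 s}\mathrm{e}^{-\lambda_0 t_0} = \mathrm{e}^{-\lambda_0 t}$, then bounding $\Vert u\Vert_\infty$ via Step~1 and absorbing the factor $\mathrm{e}^{(\lambda_0+\alpha)t_0}$ into a new constant $C_{t_0}$, gives exactly the announced bound.

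\textbf{Main obstacle.} The delicate point is Step~3: one must check that the cited result from~\cite{QSD1} indeed features an error proportional to $\phi(q,p)$, rather than being only uniform in the starting point $(q,p)\in D$. If only the uniform form is directly available, the same strategy can be symmetrized by inserting \emph{two} regularizing layers, one at each end of $[0,t]$: one uses Corollary~\ref{corollary} on $[0,t_0]$ and its dual counterpart on $[t-t_0,t]$ — the latter via the reversibility~\eqref{eq:reversibility}, noting that the adjoint dynamics~\eqref{eq:Langevin_adjoint} is itself a Langevin process with $-\gamma$ in place of $\gamma$, so that Theorem~\ref{thm:two-sided p_t^D} and Corollary~\ref{corollary} apply verbatim to reintroduce the factor $\phi(q,p)$ on the right-hand side.
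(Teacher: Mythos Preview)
Your proposal is correct and follows essentially the same route as the paper. The paper confirms your suspicion in the ``Main obstacle'': the cited result from~\cite{QSD1} is indeed stated only in uniform form (no $\phi(q,p)$ on the right-hand side), so a single regularizing layer is not enough and your symmetrized two-layer argument is what is actually carried out. Concretely, the paper first establishes the inequality for $f\in\mathrm{L}^\infty(D)$ by applying the Markov property at time $t_0/2$, which produces a prefactor $\mathbb{P}_{(q,p)}(\tau_\partial>t_0/2)$ that is then bounded by $c_{t_0/2}\phi(q,p)$ via the two-sided estimates; it then feeds $P^D_{t_0/2}f\in\mathrm{L}^\infty(D)$ into this bounded-case statement and uses exactly your Step~2 identity $\int_D\psi\,P^D_{t_0/2}f=\mathrm{e}^{-\lambda_0 t_0/2}\int_D\psi f$ to recover the correct constant.
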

\begin{proof}[Proof of Proposition~\ref{prop:long time semigroup}]
We first consider the case $f\in\mathrm{L}^\infty(D)$. Let us fix $t_0>0$ and let $t>t_0$. By the Markov property at time $t_0/2$, one has
\begin{align*}
&\mathbb{E}_{(q,p)}\left[f(q_{t-t_0/2},p_{t-t_0/2})\mathbb{1}_{\tau_\partial>t-t_0/2}\right]-\mathrm{e}^{-\lambda_0 (t-t_0/2)}\frac{\int_D \psi f}{\int_D \phi \psi} \phi(q,p)\\
&=\mathbb{E}_{(q,p)}\left[\mathbb{1}_{\tau_\partial>t_0/2}\left(\mathbb{E}_{(q_{t_0/2},p_{t_0/2})}\left[f(q_{t-t_0},p_{t-t_0})\mathbb{1}_{\tau_\partial>t-t_0}\right]-\mathrm{e}^{-\lambda_0 (t-t_0)}\frac{\int_D \psi f}{\int_D \phi \psi} \phi(q_{t_0/2},p_{t_0/2})\right)\right]\numberthis\label{eq:markov long-time}   
\end{align*}
since $\mathbb{E}_{(q,p)}\left[\mathbb{1}_{\tau_\partial>t_0/2}\phi(q_{t_0/2},p_{t_0/2})\right]=\mathrm{e}^{-\lambda_0 t_0/2}\phi(q,p)$. Besides, it follows from the long-time asymptotics in~\cite[Theorem 2.19]{QSD1} that there exists $\alpha>0$ and $C>0$ such that almost-surely, for all $t>t_0$, 
$$\left\vert\mathbb{E}_{(q_{t_0/2},p_{t_0/2})}\left[f(q_{t-t_0},p_{t-t_0})\mathbb{1}_{\tau_\partial>t-t_0}\right]-\mathrm{e}^{-\lambda_0 (t-t_0)}\frac{\int_D \psi f}{\int_D \phi \psi} \phi(q_{t_0/2},p_{t_0/2})\right\vert\leq C\Vert f\Vert_{\infty}\mathrm{e}^{-(\lambda_0+\alpha)(t-t_0)}.$$
As a result, reinjecting into~\eqref{eq:markov long-time} we deduce that
\begin{align*}
    &\left\vert\mathbb{E}_{(q,p)}\left[f(q_{t-t_0/2},p_{t-t_0/2})\mathbb{1}_{\tau_\partial>t-t_0/2}\right]-\mathrm{e}^{-\lambda_0 (t-t_0/2)}\frac{\int_D \psi f}{\int_D \phi \psi} \phi(q,p)\right\vert\\
    &\leq C\mathbb{P}_{(q,p)}(\tau_\partial>t_0/2)\Vert f\Vert_{\infty}\mathrm{e}^{-(\lambda_0+\alpha)(t-t_0)}\\
    &\leq Cc_{t_0/2}\phi(q,p)\Vert f\Vert_{\infty}\mathrm{e}^{-(\lambda_0+\alpha)(t-t_0)}, 
\end{align*}
where the constant $c_{t_0/2}>0$ follows from Theorem~\ref{thm:two-sided p_t^D}.

Assume now that $f\in\mathrm{L}_{H_{\alpha,\beta,\gamma,\sigma}}(D)$, then since $H_{\alpha,\beta,\gamma,\sigma}$ is bounded it follows from the upper-bound in Theorem~\ref{thm:two-sided p_t^D} that for all $s>0$, $P^D_s f\in\mathrm{L}^\infty(D)$ where the semigroup $(P^D_t)_{t\geq0}$ is defined in~\eqref{eq:def semigroup}. Applying the inequality above for $P^D_{t_0/2}f\in\mathrm{L}^\infty(D)$ instead of $f$ and using the Markov property in the left-hand side of the inequality, we obtain the existence of a constant $C_{t_0/2}>0$ such that for all $t>t_0$, 
\begin{equation}\label{ineq markov long-time}
    \left\vert\mathbb{E}_{(q,p)}\left[f(q_{t},p_{t})\mathbb{1}_{\tau_\partial>t}\right]-\mathrm{e}^{-\lambda_0 (t-t_0/2)}\frac{\int_D \psi P^D_{t_0/2} f}{\int_D \phi \psi} \phi(q,p)\right\vert\leq C_{t_0/2}\phi(q,p)\Vert P^D_{t_0/2} f\Vert_{\infty}\mathrm{e}^{-(\lambda_0+\alpha)t}.
\end{equation}
Moreover, by~\eqref{eq:reversibility} and using the Fubini permutation,
\begin{align*}
    \int_D \psi(q,p) P^D_{t_0/2} f(q,p)\mathrm{d}q\mathrm{d}p&=\mathrm{e}^{\gamma t_0/2}\int_D\left(\int_D\psi(q,p)\tilde{\mathrm{p}}^D_{t_0/2}(q',p',q,p)\mathrm{d}q\mathrm{d}p\right) f(q',p')\mathrm{d}q'\mathrm{d}p'\\
    &=\mathrm{e}^{\gamma t_0/2}\int_D\mathbb{E}_{(q',p')}\left[\psi(\tilde{q}_{t_0/2},\tilde{p}_{t_0/2})\mathbb{1}_{\tilde{\tau}_\partial>t_0/2}\right] f(q',p')\mathrm{d}q'\mathrm{d}p'\\
    &=\mathrm{e}^{-\lambda_0 t_0/2}\int_D\psi(q',p')f(q',p')\mathrm{d}q'\mathrm{d}p',\numberthis\label{eq:permut semigroupe psi}.
\end{align*}
by~\eqref{eq:eigenvectors}. In addition, using the upper-bound in Theorem~\ref{thm:two-sided p_t^D} along with Corollary~\ref{coroll phi psi}, there exists a constant $\alpha_{t_0/2}>0$ for all $(q,p)\in D$,
\begin{align*}
    \left\vert P^D_{t_0/2} f(q,p)\right\vert&\leq \alpha_{t_0/2}H_{\alpha,\beta,\gamma,\sigma}(q,p)\int_D\left\vert f(q',p')\right\vert H_{\alpha,\beta,-\gamma,\sigma}(q',-p')\mathrm{d}q'\mathrm{d}p'\\
&\leq \alpha_{t_0/2}\Vert H_{\alpha,\beta,\gamma,\sigma}\Vert_\infty\Vert f\Vert_{H_{\alpha,\beta,\gamma,\sigma}}\numberthis\label{ineq semigroupe H}.
\end{align*}
Reinjecting~\eqref{eq:permut semigroupe psi} and~\eqref{ineq semigroupe H} into~\eqref{ineq markov long-time} we obtain that for all $f\in\mathrm{L}_{H_{\alpha,\beta,\gamma,\sigma}}(D)$ and $t>t_0$,
$$\left\vert\mathbb{E}_{(q,p)}\left[f(q_{t},p_{t})\mathbb{1}_{\tau_\partial>t}\right]-\mathrm{e}^{-\lambda_0 t}\frac{\int_D \psi f}{\int_D \phi \psi} \phi(q,p)\right\vert\leq C_{t_0/2}\alpha_{t_0/2}\Vert H_{\alpha,\beta,\gamma,\sigma}\Vert_{\mathrm{L}^\infty(D)}\Vert f\Vert_{H_{\alpha,\beta,\gamma,\sigma}}\phi(q,p)\mathrm{e}^{-(\lambda_0+\alpha)t}$$
which concludes the proof.
\end{proof}
Let us now prove Theorem~\ref{thm:cv conditioned distrib}. The idea of the proof is similar to the proof of~\cite[Theorem 2.22]{QSD1} but uses the sharper estimate provided by Proposition~\ref{prop:long time semigroup}.
\begin{proof}[Proof of Theorem~\ref{thm:cv conditioned distrib}]
Let $\theta$ be a probability measure on $D$. First notice that for $t>0$ and $(q,p)\in D$,
$$\mathbb{P}_{(q,p)}(\tau_\partial>t)\geq\mathbb{E}_{(q,p)}\left[\mathbb{1}_{\tau_\partial>t}\frac{\phi(q_{t},p_{t})}{\Vert\phi\Vert_{\infty}}\right]=\frac{\mathrm{e}^{-\lambda_0t}}{\Vert\phi\Vert_{\infty}}\phi(q,p).$$
Consequently,
\begin{equation}\label{eq:minoration P_theta}
    \mathbb{P}_{\theta}(\tau_\partial>t)=\int_D\mathbb{P}_{(q,p)}(\tau_\partial>t)\theta(\mathrm{d}q\mathrm{d}p)\geq \frac{\int_D\phi \mathrm{d}{\theta}}{\Vert\phi\Vert_{\infty}}\mathrm{e}^{-\lambda_0 t}.
\end{equation}
Let us now take $t_0>0$ and $t>t_0$. By Proposition~\ref{prop:long time semigroup} there exists $C_{t_0}>0$ and $\alpha>0$ such that for all $f\in\mathrm{L}_{H_{\alpha,\beta,\gamma,\sigma}}(D)$,
\begin{align*}
  \left\vert\frac{\mathbb{E}_{\theta}(f(q_t,p_t)\mathbb{1}_{\tau_\partial>t})}{\mathbb{P}_{\theta}(\tau_\partial>t)}-\int_D f\psi\right\vert&=\left\vert\frac{\int_D\left(\mathbb{E}_{(q,p)}(f(q_t,p_t)\mathbb{1}_{\tau_\partial>t})-\left(\int_D f\psi\right) \mathbb{P}_{\theta}(\tau_\partial>t)\right){\theta}(\mathrm{d}x)}{\mathbb{P}_{\theta}(\tau_\partial>t)}\right\vert\\
  &\leq\int_D\frac{\left\vert\mathbb{E}_{(q,p)}(f(q_t,p_t)\mathbb{1}_{\tau_\partial>t})-\mathrm{e}^{-\lambda_0 t}\frac{\int_D \psi f}{\int_D \phi \psi} \phi(q,p)\right\vert}{\mathbb{P}_{\theta}(\tau_\partial>t)}\theta(\mathrm{d}q\mathrm{d}p)\\
  &+\left\vert\int_D \psi f\right\vert\int_D\frac{\left\vert\mathrm{e}^{-\lambda_0 t}\frac{\phi(q,p)}{\int_D \phi \psi} -  \mathbb{P}_{(q,p)}(\tau_\partial>t)\right\vert}{\mathbb{P}_{\theta}(\tau_\partial>t)}\theta(\mathrm{d}q\mathrm{d}p)\\
   &\leq\frac{C_{t_0}\mathrm{e}^{-(\lambda_0+\alpha )t}}{\mathbb{P}_{\theta}(\tau_\partial>t)}\left[\left(\int_D \phi \mathrm{d}{\theta}\right) \Vert f\Vert_{H_{\alpha,\beta,\gamma,\sigma}}+\left(\int_D\psi\vert f\vert\right)\left(\int_D \phi \mathrm{d}{\theta}\right)\Vert 1\Vert_{H_{\alpha,\beta,\gamma,\sigma}}\right]\\
  &\leq C_{t_0}\Vert \phi\Vert_{\infty}\mathrm{e}^{-\alpha t} \left[ \Vert f\Vert_{H_{\alpha,\beta,\gamma,\sigma}}+\left(\int_D\psi\vert f\vert\right)\Vert 1\Vert_{H_{\alpha,\beta,\gamma,\sigma}}\right]\numberthis\label{eq:majo int psi f}
\end{align*}
using~\eqref{eq:minoration P_theta}. Moreover, by Corollary~\ref{coroll phi psi} there exists a constant $c>0$ independent of $f\in\mathrm{L}_{H_{\alpha,\beta,\gamma,\sigma}}(D)$ such that
$$\int_D\psi\vert f\vert\leq c\Vert f\Vert_{H_{\alpha,\beta,\gamma,\sigma}}.$$  
Reinjecting this inequality above in~\eqref{eq:majo int psi f} concludes the proof.
\end{proof}
Let us conclude this work with the proof of the long-time asymptotics~\eqref{eq:long-time cv constants} in Theorem~\ref{thm:two-sided p_t^D}.
\begin{proof}[Proof of~\eqref{eq:long-time cv constants} in Theorem~\ref{thm:two-sided p_t^D}]
This proof follows from applying Proposition~\ref{prop:long time semigroup} to the killed transition density $(q,p)\in D\mapsto\mathrm{p}^D_s(q,p,q',p')$ for $s>0$ and $(q',p')\in D$. The fact that such function is in $\mathrm{L}_{H_{\alpha,\beta,\gamma,\sigma}}(D)$ for all $(q',p')\in D$ follows from the two-sided estimates~\eqref{two-sided p_t^D} and the fact that $H_{\alpha,\beta,\gamma,\sigma}$ is bounded and integrable, see Remark~\ref{rmk: prop H}. As a result, the Chapman-Kolmogorov relation ensures that for all $(q,p),(q',p')\in D$, $t,s>0$,
\begin{equation}\label{eq:apply long-time p^D_t}
    \left\vert\mathrm{p}^D_{t+s}(q,p,q',p')-\mathrm{e}^{-\lambda_0 t}\frac{\int_D \psi \mathrm{p}^D_s(\cdot,\cdot,q',p')}{\int_D \phi \psi} \phi(q,p)\right\vert\leq C_{s}\phi(q,p)\Vert \mathrm{p}^D_s(\cdot,\cdot,q',p')\Vert_{H_{\alpha,\beta,\gamma,\sigma}}\mathrm{e}^{-(\lambda_0+\alpha)t}.
\end{equation}
Furthermore, by~\eqref{eq:reversibility} and~\eqref{eq:eigenvectors},
\begin{align*}
\int_D \psi(q,p) \mathrm{p}^D_s(q,p,q',p')\mathrm{d}q\mathrm{d}p&=\mathrm{e}^{\gamma s}\int_D \psi(q,p) \tilde{\mathrm{p}}^D_s(q',p',q,p)\mathrm{d}q\mathrm{d}p\\ 
&=\mathrm{e}^{\gamma s}\mathbb{E}_{(q',p')}\left[\psi(\tilde{q}_{s},\tilde{p}_{s})\mathbb{1}_{\tilde{\tau}_\partial>s}\right]\\ 
&=\psi(q',p')\mathrm{e}^{-\lambda_0s}.
\end{align*} 
 In addition, one has from Corollary~\ref{coroll phi psi} the existence of a constant $c>0$ such that for all $(q',p')\in D$,
\begin{align*}
    \Vert \mathrm{p}^D_s(\cdot,\cdot,q',p')\Vert_{H_{\alpha,\beta,\gamma,\sigma}}&=\int_D\mathrm{p}^D_s(q,p,q',p')H_{\alpha,\beta,-\gamma,\sigma}(q,-p)\mathrm{d}q\mathrm{d}p\\
    &=c\mathrm{e}^{\gamma s}\int_D\tilde{\mathrm{p}}^D_s(q',p',q,p)\psi(q,p)\mathrm{d}q\mathrm{d}p\\
    &=c\mathrm{e}^{-\lambda_0s}\psi(q',p').
\end{align*}
As a result, reinjecting into~\eqref{eq:apply long-time p^D_t} one has
$$\left\vert\mathrm{p}^D_{t+s}(q,p,q',p')-\mathrm{e}^{-\lambda_0 (t+s)}\frac{\phi(q,p)\psi(q',p')}{\int_D \phi \psi} \right\vert\leq C_{s}c\mathrm{e}^{\alpha s}\phi(q,p)\psi(q',p')\mathrm{e}^{-(\lambda_0+\alpha)(t+s)}.$$
Therefore, taking $t+s\rightarrow\infty$ for fixed $s>0$ ensures~\eqref{eq:long-time cv constants}, which concludes the proof of Theorem~\ref{thm:two-sided p_t^D}.
\end{proof} 
\noindent \textbf{Acknowledgments:} Part of this work was supported by Samsung Science and Technology Foundation. The author would like to thank Nicolas Champagnat and Denis Villemonais for discussions leading to this work. The author also thanks Tony Lelièvre and Julien Reygner for corrections and discussions during early drafts of this work.
\bibliographystyle{plain}
\bibliography{biblio}

\end{document}